\newcommand{\R}{\mathbb{R}}
\newcommand{\Z}{\mathbb{Z}}
\newcommand{\eps}{\varepsilon}
\newcommand{\hH}{\widehat{H}}
\newcommand{\ti}{\textbf{i}}
\newcommand{\tR}{\textbf{R}}
\newcommand{\cI}{\mathcal{I}}
\newcommand{\bl}{\boldsymbol{\lambda}}
\newcommand{\blambda}{\boldsymbol{\lambda}}
\newcommand{\some}{\diamond}
\newcommand{\tQ}{\widetilde{Q}}
\newcommand{\oX}{\overline{X}}
\newcommand{\idemp}{e(\ti)}
\newcommand{\idmep}{e(\ti)}
\newcommand{\idmepj}{e(\textbf{j})}
\newtheorem{theorem}{\textbf{Theorem}}[section]
\newtheorem{lemma}[theorem]{\textbf{Lemma}}
\newtheorem{proposition}[theorem]{\textbf{Proposition}}
\newtheorem*{theo-intro}{\textbf{Theorem}}
\theoremstyle{definition}
\newtheorem{definition}[theorem]{\textbf{Definition}}
\theoremstyle{remark}
\newtheorem{remark}[theorem]{\textbf{Remark}}
\numberwithin{equation}{section}
\begin{document} 

\title{Affine Hecke algebras and generalisations of quiver Hecke algebras for type $B$}
\author{L. Poulain d'Andecy and R. Walker}

\maketitle

\begin{abstract}
\let\thefootnote\relax\footnote{The second author is supported in part by the European Research Council in the framework of The European Union H2020 with the Grant ERC 647353 QAffine.}We define and study cyclotomic quotients of affine Hecke algebras of type $B$. We establish an isomorphism between direct sums of blocks of these algebras and a generalisation, for type $B$, of cyclotomic quiver Hecke algebras which are a family of graded algebras closely related to algebras introduced by Varagnolo and Vasserot. Inspired by the work of Brundan and Kleshchev we first give a family of isomorphisms for the corresponding result in type $A$ which includes their original isomorphism. We then select a particular isomorphism from this family and use it to prove our result. 
\end{abstract}

\section{Introduction}\label{sec-intro}

KLR algebras, also called quiver Hecke algebras, are a family of graded algebras which have been introduced by Khovanov, Lauda \cite{KL} and Rouquier \cite{Rou} in order to categorify quantum groups. Namely, the finite-dimensional graded projective modules over KLR algebras categorify the negative half of the quantised universal enveloping algebra of the Kac-Moody algebra associated to some Cartan datum. KLR algebras have been the subject of intense study over the past decade. They depend upon a quiver, and when the quiver is of type A, their representation theory is intimately related to that of affine Hecke algebras of type A.

There exist finite-dimensional quotients of KLR algebras, known as cyclotomic KLR algebras, which were introduced in order to categorify an irreducible highest weight module of a quantum Kac-Moody algebra. On the other hand, there exist finite-dimensional quotients of affine Hecke algebras of type A, known as cyclotomic Hecke algebras, or Ariki--Koike algebras. In 2008, Brundan and Kleshchev \cite{BK} showed that there is an explicit isomorphism between blocks of cyclotomic Hecke algebras and cyclotomic KLR algebras (for quivers of type A). 

Among the interesting consequences of the isomorphism of Brundan and Kleshchev, a non-trivial $\mathbb{Z}$-grading was immediately established on the cyclotomic Hecke algebras (cyclotomic KLR algebras are naturally $\Z$-graded) which was not previously known. Another concerns the dependence of the cyclotomic Hecke algebras on their deformation parameter. Indeed the deformation parameter appears in the definition of the cyclotomic KLR algebras only through the quiver, which is determined once the order of the deformation parameter is known. 

\vskip .2cm
The main purpose of this paper is to give a generalisation of the isomorphism of Brundan and Kleshchev \cite{BK} in the context of affine Hecke algebras of type $B$. 
So we first define finite-dimensional cyclotomic quotients of affine Hecke algebras of type $B$ which are natural analogues of the cyclotomic Hecke algebras. We note that, by their definition, understanding the representation theory of all these cyclotomic quotients allows one to understand all finite-dimensional representations of affine Hecke algebras of type $B$.

We then look for a candidate to replace the cyclotomic KLR algebras appearing in the isomorphism of Brundan and Kleshchev. We note that despite their name, KLR algebras of type $B$ (that is, with a type $B$ quiver) are not relevant here. Instead, the natural candidate is found to be a family of algebras introduced by Varagnolo and Vasserot in \cite{VV} which they used to prove a conjecture of Enomoto and Kashiwara \cite{EK} concerning the category of representations of affine Hecke algebras of type $B$. They are a family of $\Z$-graded algebras which play, in this context, a role similar to that played by KLR algebras for affine Hecke algebras of type $A$. We extend this similarity by recovering them in our generalisation of the isomorphism of Brundan and Kleshchev.

We note that in the work of both Enomoto--Kashiwara \cite{EK} and Varagnolo--Vasserot \cite{VV}, a restriction was imposed on the representations of the affine Hecke algebras of type $B$. This restriction stated that $\pm1$ does not appear in the set of eigenvalues of certain elements (or equivalently, $\pm1$ does not appear in the vertex set of the quiver defining the algebras used by Varagnolo and Vasserot). Here we work with no restriction on the considered representations and thus we have to generalise the definition of the algebras of Varagnolo and Vasserot in order to include the possibility of $\pm1$ in the set of eigenvalues. We remark here that our convention of the roles of the deformation parameters $p$ and $q$ in this paper is the opposite convention to that used by both Varagnolo--Vasserot in \cite{VV} and Walker in \cite{Walker}. 

\paragraph{Statement of the main result.} Throughout the paper we work over a field $K$ of characteristic different from 2 and we fix $p,q\in K\setminus\{0\}$ such that $p^2, q^2\neq 1$. To state our main result, we introduce some notation. For any $\lambda\in K\setminus\{0\}$, let $I_{\lambda}:=\{\lambda^{\epsilon}q^{2l}\ |\ \epsilon\in\{-1,1\}\,,\ \ l\in\Z\}$. Fix an $l$-tuple $\bl=(\lambda_1,\dots,\lambda_l)$ of elements of $K\setminus\{0\}$ such that we have $I_{\lambda_a}\cap I_{\lambda_b}=\emptyset$ if $a\neq b$. Then we fix a multiplicity map
\begin{equation}\label{def-m-intro}
m\ :\ I_{\lambda_1}\cup\dots\cup I_{\lambda_l}\to \Z_{\geq0}\ ,
\end{equation}
with finite support. That is, such that only a finite number of multiplicities $m(i)$ are different from $0$. To such a multiplicity map we associate a cyclotomic quotient, denoted $H(B_n)_{\bl,m}$, of the affine Hecke algebra $\hH(B_n)$. Note again that we consider arbitrary eigenvalues and consequently any finite-dimensional representation of the affine Hecke algebra $\hH(B_n)$ factors through one of these cyclotomic quotients.

\vskip .2cm
Our main goal is to give a new presentation of the cyclotomic quotient $H(B_n)_{\blambda,m}$. To do this, we consider a quiver denoted $\Gamma_{\bl}$ associated to the choice of $\bl=(\lambda_1,\dots,\lambda_l)$. The vertices of this quiver are indexed by the set $I_{\lambda_1}\cup\dots\cup I_{\lambda_l}$ (and we will identify a vertex with its index). The arrows are given as follows. For every $i\in I_{\lambda_1}\cup\dots\cup I_{\lambda_l}$ there is an arrow with origin $i$ and target $q^2i$.

To this quiver, we associate an algebra denoted $V_{\bl}^{n}$. If $\pm1\notin I_{\lambda_1}\cup\dots\cup I_{\lambda_l}$ then these algebras were introduced by Varagnolo and Vasserot in \cite{VV}. Finally, to the multiplicity map $m$ above, we associate a cyclotomic quotient of this algebra, denoted $V_{\bl,m}^{n}$. The algebras $V_{\bl,m}^{n}$ are analogues of the cyclotomic KLR algebras.

All the precise definitions are given in Section \ref{sec-defV}. We can now state our main result.

\begin{theorem}\label{theo}
The algebra $H(B_n)_{\bl,m}$ is isomorphic to the algebra $V_{\bl,m}^{n}$.
\end{theorem}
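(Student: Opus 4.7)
The plan is to adapt the Brundan--Kleshchev strategy from type $A$ to type $B$, with the flexibility in the type $A$ family of isomorphisms announced in the abstract playing the crucial role of making the extra type $B$ generator fit.

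First I would decompose both algebras as direct sums indexed by residue sequences. On the Hecke side, $H(B_n)_{\bl,m}$ is finite-dimensional and the Jucys--Murphy elements $X_1,\dots,X_n$ are integral, so $H(B_n)_{\bl,m}$ decomposes into simultaneous generalised eigenspaces, giving a system of mutually orthogonal idempotents $\idemp$ indexed by tuples $\ti$ of eigenvalues lying in $I_{\lambda_1}\cup\dots\cup I_{\lambda_l}$. The cyclotomic relation, together with the compatibility forced by the type $B$ reflection generator acting on $X_1$, cuts down the admissible $\ti$ to precisely the set indexing the idempotents in the presentation of $V_{\bl,m}^{n}$. On the $V_{\bl,m}^{n}$-side, this decomposition is built into the definition.

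Next I would define the isomorphism on each block. For the type $A$ subalgebra generated by $T_1,\dots,T_{n-1}$ and $X_1,\dots,X_n$, I would invoke the particular isomorphism from the family constructed earlier in the paper, writing each $T_i$ ($i\geq 1$) and each $X_j$ as an explicit power series in the KLR-type generators of $V_{\bl,m}^{n}$ multiplied by the appropriate $\idemp$, each power series being invertible on its block because its constant term is determined by $\ti$. The real work is to specify the images of the type $B$ generators $T_0$ and $X_1^{\pm 1}$: the natural candidate is a sum over $\ti$ of the extra VV-style generator (call it $\pi$) multiplied by idempotent-supported power series tailored so that the quadratic relation for $T_0$ and the length-four braid relation $T_0T_1T_0T_1=T_1T_0T_1T_0$ hold on every block.

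The main obstacle is verifying this type $B$ braid relation and the quadratic relation for $T_0$. This is exactly where the freedom in the type $A$ family is needed: the power series describing $T_1$ and those describing $T_0$ interact through the braid relation, and one chooses the type $A$ representative so that the cross terms telescope against the defining relations between $\pi$, the polynomial generators and the $\idemp$ in $V_{\bl,m}^{n}$. The vertices $\pm 1\in I_{\lambda_1}\cup\dots\cup I_{\lambda_l}$ must be handled with a separate case analysis: these are precisely the residues for which the VV presentation has been extended in this paper, and the power-series coefficients at those $\ti$ with $i_1=\pm 1$ have to be adjusted so that $T_0$ remains invertible and the quadratic relation still holds.

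Finally I would construct a map in the opposite direction by analogous explicit formulas, expressing the idempotents, polynomial generators, KLR braid operators and $\pi$ of $V_{\bl,m}^{n}$ as elements of $H(B_n)_{\bl,m}$, using the primitive idempotent decomposition for the idempotents and inverting the power series on each block for the remaining generators. Checking that the two constructions are mutually inverse reduces, after having verified all the relations, to a computation on generators; one then concludes from a dimension count obtained by noting that the spanning sets of both algebras obtained from their presentations are in canonical bijection via the maps just constructed.
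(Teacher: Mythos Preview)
Your approach is essentially the paper's: decompose into blocks indexed by $W(B_n)$-orbits, extend a chosen member of the type~$A$ family of isomorphisms by specifying the image of the extra generator, and verify the quadratic and four-term braid relations case by case. Two points deserve sharpening. First, the paper inserts an intermediary algebra $\mathcal{I}_N$ (Definition~\ref{defI}) between $e_\beta H(B_n)_{\bl,m}$ and $V^\beta_{\bl,m}$, splitting the proof into two independent isomorphisms; this separates the step ``replace $g_a$ by intertwiners $\Phi_a$ and introduce idempotents'' from the step ``replace invertible $X_k$ by nilpotent $y_k$ and renormalise $\Phi_a\mapsto\psi_a$''. Your direct construction composes these two steps, which is fine in principle but makes the four-term braid verification substantially longer. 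Second, and more importantly, you leave ``one chooses the type $A$ representative so that the cross terms telescope'' unspecified: the paper pins this down as $f(z)=z+\frac{z}{1-z}$, i.e.\ $y_k=i_kX_k^{-1}-i_k^{-1}X_k$, and the reason this (and essentially only this) choice works is Lemma~\ref{lem-fg}, the symmetry $1-g(-z)=\bigl(1-g(z)\bigr)^{-1}$ of its composition inverse, which is exactly what makes the renormalising power series $Q_0(\ti),Q_1(\ti)$ compatible with the $r_0$-action (Condition~(\ref{eq-Q6})). Without identifying this property you cannot actually carry out the braid verification. Finally, the dimension count you mention at the end is unnecessary: once the two maps are checked to be algebra homomorphisms and mutually inverse on generators, they are isomorphisms.
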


In fact, we prove Theorem \ref{theo} by first decomposing $H(B_n)_{\blambda,m}$ as a direct sum of subalgebras. Let $\beta$ be an orbit for the natural action (by inversion and permutation) of the Weyl group of type $B_n$ on the set $\bigl(I_{\lambda_1}\cup\dots\cup I_{\lambda_l}\bigr)^n$. An element $e_{\beta}$ of $H(B_n)_{\blambda,m}$ is naturally associated to such an orbit which is, if non-zero, a central idempotent. The algebra $H(B_n)_{\blambda,m}$ then decomposes as a direct sum whose summands are the non-zero $e_{\beta}H(B_n)_{\blambda,m}$.

\vskip .2cm
The algebras $e_{\beta}H(B_n)_{\blambda,m}$ are in fact our main objects of study. They are the analogues for type B of the blocks of cyclotomic quotients of type A studied in \cite{BK}. In general, the non-zero subalgebras $e_{\beta}H(B_n)_{\blambda,m}$ are direct sums of blocks of $H(B_n)_{\blambda,m}$. Note that here we do not know if the non-zero idempotents $e_{\beta}$ are primitive in general (whereas the analogous statement is known for cyclotomic quotients of type A). However, as already noted in \cite{BK} for type A, the primitivity of $e_{\beta}$ plays no role in this work.

\vskip .2cm
An orbit $\beta$ in $\bigl(I_{\lambda_1}\cup\dots\cup I_{\lambda_l})^n$ corresponds to a dimension vector for the quiver $\Gamma_{\bl}$ (satisfying a compatibility condition with the involution $i\mapsto i^{-1}$ on the set of vertices), and we have associated algebras denoted by $V_{\bl}^{\beta}$ and their cyclotomic quotients $V_{\bl,m}^{\beta}$. By definition
\[V_{\bl}^{n}:=\bigoplus_\beta V_{\bl}^{\beta}\ \ \ \ \text{and}\ \ \ \ V_{\bl,m}^{n}:=\bigoplus_\beta V_{\bl,m}^{\beta}\ ,\]
where the direct sums are over the set of orbits $\beta$ in $\bigl(I_{\lambda_1}\cup\dots\cup I_{\lambda_l})^n$. What we prove to obtain Theorem \ref{theo} is that, for any orbit $\beta$, the algebras $e_{\beta}H(B_n)_{\blambda,m}$ and $V_{\bl,m}^{\beta}$ are isomorphic.

\paragraph{Consequences.} Since the algebras $V_{\bl,m}^{\beta}$ (and in turn $V_{\bl,m}^{n}$) are naturally $\Z$-graded, the theorem provides a non-trivial $\Z$-grading on the cyclotomic quotients $H(B_n)_{\bl,m}$ . It suggests in particular that one can study a graded representation theory for the affine Hecke algebra of type B.

\vskip .2cm
Moreover, from the definitions, it is clear that the algebra $V_{\bl}^{n}$ depends only on the following information: the quiver $\Gamma_{\bl}$, the involution $i\mapsto i^{-1}$ on the set of vertices of $\Gamma_{\bl}$, and the possible presence and position of $\pm p$ in $\Gamma_{\bl}$. This has immediate consequences concerning the values of $\bl$ for which it is enough to consider, and concerning the dependence of the algebras $H(B_n)_{\bl,m}$ on the deformation parameters $p$ and $q$. To be more precise, if $q$ is a root of unity, let $e$ be the smallest non-zero positive integer such that $q^{2e}=1$; if $q$ is not a root of unity, set $e:=\infty$. Then, if $p\in\pm q^{\mathbb{Z}}$ define $e'$ in $\mathbb{Z}/e\mathbb{Z}$ (by convention, $e'\in\mathbb{Z}$ if $e=+\infty$) by $p=\pm q^{e'}$. If $p\notin\pm q^{\mathbb{Z}}$ let $e':=\infty$. As a direct consequence of Theorem \ref{theo}, the cyclotomic quotient $H(B_n)_{\bl,m}$ depends on $q$ and $p$ only through $e$ and $e'$.

\vskip .2cm
For the study of irreducible representations of the affine Hecke algebra $\hH(B_n)$, it is noted in \cite{EK} that it is enough to consider the situation $l=1$, where $\bl$ is a single $\lambda\in K\setminus\{0\}$. However, for a complete study of the category of representations of $\hH(B_n)$, one needs a priori to consider arbitrary cyclotomic quotients, even if one can expect to reduce everything to the situation $l=1$, in the spirit of the Dipper--Mathas Morita equivalence for type A \cite{DM}; see also \cite[\S 3.4]{Ro2} (this will be the subject of another paper). 

Assume in this paragraph that we have fixed $p,q \in K\setminus\{ 0 \}$ such that $p^2,q^2\neq 1$ and that $\bl$ consists of a single $\lambda$. A direct consequence of the theorem is that, as we indicated above, the only relevant information about $\lambda$ is contained in the quiver $\Gamma_{\lambda}$, the involution $i\mapsto i^{-1}$ on the set of vertices of $\Gamma_{\lambda}$, and the possible presence and position of $\pm p$ in $\Gamma_{\lambda}$. Clearly, it is enough to consider only one $\lambda$ in each orbit in $K\setminus\{0\}$ under inversion and multiplication by powers of $q^2$. Moreover, it is easy to see that $\lambda'=-\lambda$ gives equivalent data and therefore results in isomorphic algebras $V_{\lambda}^{n}$. So finally, we obtain that it is enough to consider one of the following four situations for the choice of $\lambda$:
\[\mathbf{(a)}\ \lambda=1\ \qquad \mathbf{(b)}\ \lambda=q\ \qquad \mathbf{(c)}\ \lambda=p\ \qquad \mathbf{(d)}\ \lambda\notin\{\pm q^{\mathbb{Z}}\,,\,\pm p^{\pm1}q^{2\mathbb{Z}}\}\ .\]
Situation \textbf{(d)} corresponds to a quiver with two disconnected components exchanged by the involution $i\mapsto i^{-1}$, and not containing any of the special points $\pm p$. All $\lambda\notin\{\pm q^{\mathbb{Z}}\,,\,\pm p^{\pm1}q^{2\mathbb{Z}}\}$ result then in isomorphic algebras $V_{\lambda}^{n}$.

If $p\in \pm q^{\mathbb{Z}}$ then Case \textbf{(c)} amounts to studying one of the Cases \textbf{(a)} or \textbf{(b)}. If $p\notin \pm q^{\mathbb{Z}}$ then situation \textbf{(c)} corresponds to a quiver with two disconnected components related by the involution $i\mapsto i^{-1}$, and containing at least one special point (it contains the two special points $\pm p$ only if $q^{2N}=-1$ for some $N$, namely, only if $e$ is even). 

If $q$ is an odd root of unity then Case \textbf{(b)} reduces to Case \textbf{(a)}. If $q$ is not an odd root of unity then situation \textbf{(b)} corresponds to a quiver with a single connected component, stable under the involution $i\mapsto i^{-1}$, and with no fixed point. Finally, Case \textbf{(a)} corresponds to a quiver with a single connected component which is stable under the involution $i\mapsto i^{-1}$ and with at least one fixed point (it contains the two fixed points $\pm1$ only if $q^{2N}=-1$ for some $N$, namely, only if $e$ is even).

\paragraph{Remarks on the proof.}
Our approach for the proof of Theorem \ref{theo} is inspired by the proof of Brundan and Kleshchev for the type A setting \cite{BK}. In particular, the isomorphism is given explicitly. In fact, we split the proof of Theorem \ref{theo} into two steps by introducing an intermediary algebra isomorphic to both sides of the isomorphism statement. Roughly speaking, the first step consists in replacing the Coxeter generators of $H(B_n)_{\bl,m}$ by modified intertwining elements and introducing a complete family of orthogonal idempotents. The second step consists in replacing the invertible commuting elements of $H(B_n)_{\bl,m}$ by commuting nilpotent elements and in making a subtle renormalisation of the intertwining elements in order to obtain the desired ``nice'' defining relations of $V_{\bl,m}^{n}$ (we note that these relations are defined over $\mathbb{Z}$).

\vskip .2cm
We emphasize that the restriction of our isomorphism to type A does not correspond to the isomorphism of Brundan--Kleshchev in \cite{BK}. Indeed it seems that their isomorphism can not be extended directly to the type B situation.

So we start by giving a whole family of isomorphisms between blocks of cyclotomic quotients of type A and cyclotomic KLR algebras, one of them being the isomorphism described in \cite{BK}. Let us say a few words about this family of isomorphisms. Let $X$ be an invertible endomorphism of a finite-dimensional vector space with a single eigenvalue $i$. One can obtain a nilpotent element by setting $y=1-i^{-1}X$, and this is used by Brundan and Kleshchev to construct the nilpotent elements required for the isomorphism in type A. Our generalisation relies on the remark that a more general possibility is to consider the following nilpotent element
\[y=f(1-i^{-1}X)\ \ \ \ \ \text{where $f\in K[[z]]$ with no constant term.}\]
So we consider a formal power series $f$ with no constant term and find that we are able to generalise the isomorphism obtained by Brundan and Kleshchev using the above construction of nilpotent elements. The only condition on $f$, in addition to having no constant term, is that it must have a composition inverse (that is, $f$ has a non-zero coefficient in degree 1). We note that the existence of not only one but a whole family of isomorphisms reflects the existence of a family of automorphisms of cyclotomic KLR algebras, that we describe explicitly.

Finally it turns out that the isomorphism that we are able to extend to the type B setting is the one using the following construction of nilpotent elements:
\[y=iX^{-1}-i^{-1}X=f(1-i^{-1}X)\ ,\]
where $f$ is the following power series: $f(z)=z+\displaystyle\frac{z}{1-z}\,$. Note that $f$ is indeed invertible for the composition since its coefficient in degree 1 is equal to 2 (and this is where we use that the characteristic of $K$ is different from 2).

\vskip .2cm
One can expect that a similar construction can be used for more general affine Hecke algebras. In particular we conjecture that one can construct finite-dimensional cyclotomic quotients of affine Hecke algebras of type D which relate in this way to quotients of the algebras introduced by Shan-Varagnolo-Vasserot \cite{SVV}.

\paragraph{Organisation of the paper.} In Section \ref{sec-defV} we give the definition of the algebras $V_{\bl}^{\beta}$, $V_{\bl}^{n}$ and their cyclotomic quotients. In Section \ref{sec-def}, we recall the definition of the affine Hecke algebras $\hH(B_n)$ and state some well-know properties. The definition of the cyclotomic quotients of $\hH(B_n)$ and of the main objects of study $e_{\beta}H(B_n)_{\bl,m}$ are given in Section \ref{sec-cyc}. In Section \ref{sec-inter}, we introduce an algebra which will play a role in our proof as an intermediary between $e_{\beta}H(B_n)_{\bl,m}$ and $V_{\bl,m}^{\beta}$. The generalised isomorphism in the type A setting is stated and proved in Section \ref{sec-A}, while the proof of our main result, Theorem \ref{theo}, is concluded in Section \ref{sec-proof}.

\section{The algebra $V_{\bl}^{\beta}$ and its cyclotomic quotients $V_{\bl,m}^{\beta}$}\label{sec-defV}

Fix an $l$-tuple $\bl=(\lambda_1,\dots,\lambda_l)$ of elements of $K\setminus\{0\}$ such that we have $I_{\lambda_a}\cap I_{\lambda_b}=\emptyset$ if $a\neq b$. We recall that $I_{\lambda}:=\{\lambda^{\epsilon}q^{2l}\ |\ \epsilon\in\{-1,1\}\,,\ \ l\in\Z\}$ for $\lambda\in K\setminus\{0\}$. Let
\[S:=I_{\lambda_1}\cup\dots\cup I_{\lambda_l}\ .\]
We construct a quiver denoted $\Gamma_{\bl}$. The vertices of this quiver are indexed by $S$ (and we will identify a vertex with its index) and the arrows are given as follows. For every $i\in S$ there is an arrow with origin $i$ and target $q^2i$. We note that this convention is the opposite to that used in \cite{VV} and in \cite{Walker}.

\vskip .2cm
For $n\geq1$, the formulas
\begin{equation*}
\begin{split}
r_0\cdot(i_1,\ldots,i_n) &=(i_1^{-1},i_2,\ldots,i_n) \\
r_k\cdot(\ldots,i_k,i_{k+1},\ldots) &=(\ldots,i_{k+1},i_k,\ldots)\ \ \quad\text{for $k=1,\dots,n-1$,}
\end{split}
\end{equation*}
provide an action of the Weyl group $W(B_n)$ of type $B_n$ on $S^n$. 
We fix $\beta$ an orbit in $S^n$ for this action.

\vskip .2cm
We will use the following notation, for $i,j\in S$:
\begin{equation*}
\begin{split}
i \nleftrightarrow j\quad &\textrm{ when } i\neq j \text{ and $j\notin\{q^2i,q^{-2}i\}$\,,} \\
i \rightarrow j \quad &\textrm{ when $j=q^2i\neq q^{-2}i$\,,}  \\
i \leftarrow j \quad &\textrm{ when $j=q^{-2}i\neq q^{2}i$\,,}  \\
i \leftrightarrow j \quad &\textrm{ when $j=q^{2}i=q^{-2}i$\,.}
\end{split}
\end{equation*}
Note that $i\neq j$ in all these cases (recall that $q^2\neq 1$) and that the fourth case means that $q^2=-1$ and $i=-j$. Similarly we write
\begin{equation*}
\begin{split}
i^{-1} \stackrel{p}{\nleftrightarrow} i &\textrm{ when } i\neq i^{-1} \text{ and $i\notin\{\pm p^{\pm1}\}$\,,} \\
i^{-1} \stackrel{p}{\longrightarrow} i &\textrm{ when } i\in\{\pm p\}\ \text{and}\ i\notin\{\pm p^{-1}\}\,,\\
i^{-1} \stackrel{p}{\longleftarrow} i &\textrm{ when } i\notin\{\pm p\}\ \text{and}\ i\in\{\pm p^{-1}\}\,,\\
i^{-1} \stackrel{p}{\longleftrightarrow} i &\textrm{ when } i\in \{\pm p\}\cap \{\pm p^{-1}\}.
\end{split}
\end{equation*} 
Note that $i\neq i^{-1}$ in all these cases (recall that $p^2\neq 1$) and that the fourth case means that $p^2=-1$ and $i^{-1}=-i$.

\begin{definition} \label{definition:vv algebra}
The algebra $V_{\bl}^{\beta}$ is the $\Z$-graded $K$-algebra generated by elements 
\begin{equation*}
\{ \psi_a \}_{0\leq a\leq n-1} \cup \{ y_j \}_{1\leq j\leq n} \cup \{ \idemp \}_{\textbf{i} \in \beta}
\end{equation*}
which satisfy the following defining relations. 
\begin{align}
\sum_{\textbf{i} \in \beta} \idmep & = 1\,,\ \ \quad \idmep\idmepj = \delta_{\ti,\textbf{j}}\idmep\,,\ \ \ \forall\ti,\textbf{j}\in \beta\,, \label{Rel:V1} \\ 
y_iy_j&=y_jy_i\,,\ \ \quad y_{i}\idmep=\idmep y_{i}\,,\ \ \ \ \forall i,j\in\{1,\ldots,n\} \text{ and } \forall\ti\in\beta\,,\label{Rel:V2}
\end{align}
and, for $a,b\in\{0,1,\dots,n-1\}$ with $b\neq 0$, for $j\in\{1,\dots,n\}$, and for $\ti=(i_1,\dots,i_n)\in\beta$,
\begin{align}
\psi_{a}\idmep &= e(r_{a}(\ti))\psi_a\,,\label{Rel:V3}\\
(\psi_b y_j - y_{s_b(j)}\psi_b)\idmep&=\left\{
\begin{array}{l l l}
			-\idmep & \quad \textrm{ if } j=b,\ i_b=i_{b+1}\,,\\
			\idmep & \quad \textrm{ if } j=b+1,\ i_b=i_{b+1}\,,\\
			0 & \quad \textrm{ else,}
\end{array} \right. \label{Rel:V4}
\end{align}
where $s_b$ is the transposition of $b$ and $b+1$ acting on $\{1,\dots,n\}$;
\begin{align}
\psi_a\psi_b&=\psi_b\psi_a \hspace{0.3em} \textrm{ if } |a-b|>1\,, \label{Rel:V5} \\
\psi_b^2\idmep &= \left\{
\begin{array}{l l l l l}
0 & \quad \textrm{if } i_b=i_{b+1}\,, \\[0.2em]
\idmep & \quad \textrm{if } i_{b} \nleftrightarrow i_{b+1}\,, \\[0.2em]
(y_{b+1} - y_b)\idmep & \quad \textrm{if } i_b \rightarrow i_{b+1}\,,\\[0.2em]
(y_b - y_{b+1})\idmep & \quad \textrm{if } i_b \leftarrow i_{b+1}\,, \\[0.2em]
(y_{b+1}-y_b)(y_b-y_{b+1})\idemp & \quad \textrm{if } i_b \leftrightarrow i_{b+1}\,, 
\end{array} \right. \label{Rel:V6}\\
(\psi_{b}\psi_{b+1}\psi_{b} - \psi_{b+1}\psi_{b}\psi_{b+1})\idemp &= \left\{
\begin{array}{l l l l}
\idmep & \quad i_b=i_{b+2}\rightarrow i_{b+1}\,\\[0.2em]
-\idmep & \quad i_b=i_{b+2}\leftarrow i_{b+1}\, \\[0.2em]
(y_{b+2}-2y_{b+1}+y_b)\idmep & \quad i_b=i_{b+2} \leftrightarrow i_{b+1}\,,\\[0.2em]
0 & \quad \text{else\,,}\,
\end{array} \right. \label{Rel:V7}
\end{align}

\begin{align}
(\psi_0 y_1 + y_1\psi_0)\idmep&=\left\{
\begin{array}{l l}
			0 & \textrm{ if } i_1^{-1}\neq i_1\,,\\[0.2em]
			2\idmep & \textrm{ if } i_1^{-1}= i_1\,,
\end{array} \right. \label{Rel:V8}\\
\psi_0 y_j &= y_j \psi_0 \textrm{ if } j > 1\,, \label{Rel:V9}\\
\psi_0^2\idemp&=\left\lbrace
\begin{array}{ll}
0 & \textrm{ if } i_1^{-1}=i_1\,, \\[0.2em]
\idmep & \textrm{ if } i_1^{-1} \stackrel{p}{\nleftrightarrow} i_1\,, \\[0.2em]
y_1\idmep & \textrm{ if } i_1^{-1} \stackrel{p}{\longrightarrow} i_1\,, \\[0.2em]
-y_1\idmep & \textrm{ if } i_1^{-1} \stackrel{p}{\longleftarrow} i_1\,, \\[0.2em]
-y_1^2\idmep & \textrm{ if } i_1^{-1} \stackrel{p}{\longleftrightarrow} i_1\,,
\end{array}
\right. \label{Rel:V10} 
\end{align}

\begin{align}
((\psi_0 \psi_1)^2 - (\psi_1\psi_0)^2)\idmep&=\left\lbrace
\begin{array}{ll}
2\psi_0\idmep & \textrm{ if } i_1^{-1}\neq i_1\rightarrow i_2=i_2^{-1}\,, \\[0.2em]
-2\psi_0\idmep & \textrm{ if } i_1^{-1}\neq i_1\leftarrow i_2=i_2^{-1}\,, \\[0.2em]
4(\psi_0y_1-1)\idmep & \textrm{ if } i_1^{-1}= i_1\leftrightarrow i_2=i_2^{-1}\,, \\[0.2em]
-\psi_1\idmep & \textrm{ if }i_2\neq i_1 \stackrel{p}{\longleftarrow} i_1^{-1}=i_2\,,\\[0.2em]
\psi_1\idmep & \textrm{ if } i_2\neq i_1 \stackrel{p}{\longrightarrow} i_1^{-1}=i_2\,,\\[0.2em]
\psi_1(y_1-y_2)\idmep & \textrm{ if } i_2\neq i_1 \stackrel{p}{\longleftrightarrow} i_1^{-1}=i_2\,, \\[0.2em]
0 & \textrm{ else.}
\end{array}\right.
\label{Rel:V11}
\end{align}
The $\Z$-grading on $V_{\bl}^{\beta}$ is given as follows,
\begin{equation*}
\begin{split}
&\textrm{deg}(\idmep)=0, \\ 
&\textrm{deg}(y_j \idmep)=2, \\
&\textrm{deg}(\psi_0\idmep)=\left\{
\begin{array}{l l}	
|\{i_1\}\cap \{\pm p\}|+|\{i_1\}\cap \{\pm p^{-1}\}| & \quad \textrm{ if  } i_1^{-1} \neq i_1\,, \\[0.2em]
-2 & \quad \textrm{ if  } i_1^{-1} = i_1\,,	
\end{array} \right. \\
&\textrm{deg}(\psi_b \idmep)=\left\{
\begin{array}{l l l}	
|i_b \rightarrow i_{b+1}|+|i_b \leftarrow i_{b+1}| & \quad \textrm{ if  } i_b \neq i_{b+1}\,,\\
-2 & \quad \textrm{ if  } 	i_b = i_{b+1}.	
\end{array} \right. 
\end{split}
\end{equation*}
where $|i \rightarrow j|$ represents the number of arrows in $\Gamma_{\bl}$ which have origin $i$ and target $j$.
\end{definition}

\begin{definition}
Let $m\ :\ S \longrightarrow \Z_{\geq0}$ be a multiplicity map with finite support, i.e. $m(i)=0$ for all but finitely many $i \in S$. We define the cyclotomic quotient, denoted by $V_{\bl,m}^{\beta}$, to be the quotient of $V_{\bl}^{\beta}$ by the relation
\begin{equation}\label{cycV}
y_1^{m(i_1)}\idmep=0 \ \ \ \textrm{ for every } \textbf{i} \in \beta\ .
\end{equation}
\end{definition}

Finally, we define algebras by summing over all possible orbits $\beta\subset S^n$ (alternatively, over the set of dimension vectors of height $n$ satisfying certain conditions; see Remark \ref{rem-orb}).
\begin{definition}
Let $\mathcal{O}_{S^n}$ be the set of orbits in $S^n$ under the action of the Weyl group $W(B_n)$. We set:
\[V_{\bl}^{n}:=\bigoplus_{\beta\in\mathcal{O}_{S^n}}V_{\bl}^{\beta}\ \ \ \ \text{and}\ \ \ \ V_{\bl,m}^{n}:=\bigoplus_{\beta\in\mathcal{O}_{S^n}}V_{\bl,m}^{\beta}\ .\]
\end{definition}

Let $\tR^{\beta}_{\bl}$ denote the algebra defined by generators as in Definition \ref{definition:vv algebra} with $\psi_0$ removed and with defining relations (\ref{Rel:V1})--(\ref{Rel:V7}). The algebra $\tR^{\beta}_{\bl}$ is the KLR algebra associated to the quiver $\Gamma_{\bl}$ and the dimension vector $\beta$. By definition, we have that the subalgebra of $V_{\bl}^{\beta}$ generated by all the generators except $\psi_0$ is a quotient of the algebra $\tR^{\beta}_{\bl}$ (and a similar immediate statement for $V_{\bl,m}^{\beta}$ and the cyclotomic KLR algebra $\tR^{\beta}_{\bl,m}$, which is the quotient of $\tR^{\beta}_{\bl}$ by (\ref{cycV})).
\\
\\
We have the following fundamental lemma, which is proved in \cite[Lemma 2.1]{BK} (more precisely, it is proved in \cite{BK} in the context of cyclotomic KLR algebras; however, all defining relations of cyclotomic KLR algebras are present in $V_{\bl,m}^{\beta}$, so the proof in \cite{BK} can be repeated here verbatim).
\begin{lemma} \label{lem-nily}
The elements $y_i\in V_{\bl,m}^{\beta}$ are nilpotent, for all $1 \leq i \leq n$.
\end{lemma}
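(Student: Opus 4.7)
The plan is to reduce the statement directly to the corresponding result for the cyclotomic KLR algebra $\tR^{\beta}_{\bl,m}$, where it is exactly \cite[Lemma 2.1]{BK}. As noted in the paragraph preceding the lemma, the subalgebra of $V_{\bl,m}^{\beta}$ generated by $\{e(\ti)\}_{\ti\in\beta}$, $\{y_j\}_{1\leq j\leq n}$ and $\{\psi_a\}_{1\leq a\leq n-1}$ satisfies the defining relations (\ref{Rel:V1})--(\ref{Rel:V7}) together with the cyclotomic relation (\ref{cycV}), and these are exactly the defining relations of $\tR^{\beta}_{\bl,m}$. Hence this subalgebra is a quotient of $\tR^{\beta}_{\bl,m}$, and since nilpotency of an element is preserved under taking quotients it is enough to prove the result in $\tR^{\beta}_{\bl,m}$. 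In particular, no new argument beyond that of \cite{BK} is required, and the generator $\psi_0$ together with the relations (\ref{Rel:V8})--(\ref{Rel:V11}) play no role here.

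For completeness I sketch how the proof of \cite[Lemma~2.1]{BK} runs. The argument is by induction on $i$. The base case $i=1$ is immediate: the orbit $\beta\subset S^n$ is finite, being a single $W(B_n)$-orbit, and $\sum_{\ti\in\beta}e(\ti)=1$, so the cyclotomic relation $y_1^{m(i_1)}e(\ti)=0$ yields $y_1^N=0$ with $N:=\max_{\ti\in\beta}m(i_1)$. For the inductive step, assuming $y_b^N=0$ one analyses $y_{b+1}^k e(\ti)$ for each $\ti\in\beta$ according to the relative positions of $i_b$ and $i_{b+1}$ in $\Gamma_{\bl}$. Using (\ref{Rel:V4}) one moves $\psi_b$ past a string of $y_{b+1}$'s, while (\ref{Rel:V6}) expresses $\psi_b^2 e(\ti)$ as a polynomial in $y_b$ and $y_{b+1}$. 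Combining these two identities one rewrites a sufficiently high power of $y_{b+1}e(\ti)$ as a linear combination of terms each containing a large power of $y_b$, which vanishes by the inductive hypothesis. Summing over the finitely many $\ti\in\beta$ yields a uniform exponent $M$ with $y_{b+1}^M=0$.

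The main technical point lies in the case analysis forced by (\ref{Rel:V4}) and (\ref{Rel:V6}): when $i_b=i_{b+1}$ the commutation with $y_j$ picks up an $\pm e(\ti)$ correction term, and when $i_b$ and $i_{b+1}$ are joined by an arrow (or a double arrow, in the $\leftrightarrow$ case) in $\Gamma_{\bl}$ the square $\psi_b^2 e(\ti)$ contributes a non-trivial polynomial in $y_b,y_{b+1}$; both corrections must be tracked carefully through the induction. This is however precisely the combinatorics already present in the type $A$ cyclotomic KLR setting, so the argument of \cite[Lemma~2.1]{BK} applies verbatim to $\tR^{\beta}_{\bl,m}$, and the lemma follows by passing to the quotient $V_{\bl,m}^{\beta}$.
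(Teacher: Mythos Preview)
Your proposal is correct and is essentially the paper's own argument: the paper simply observes that all defining relations of the cyclotomic KLR algebra $\tR^{\beta}_{\bl,m}$ are present in $V_{\bl,m}^{\beta}$, so the proof of \cite[Lemma~2.1]{BK} can be repeated verbatim, which is exactly the reduction you carry out (phrased via the quotient map from $\tR^{\beta}_{\bl,m}$ onto the subalgebra generated by the $e(\ti)$, $y_j$, and $\psi_a$ for $a\geq 1$). Your additional sketch of how the \cite{BK} induction runs is a helpful elaboration but not needed for the comparison.
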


\paragraph{Notation $\boldsymbol{{}^{r_a}P}$.} Let $b\in\{1,\dots,n-1\}$ and let $P\in K[[z_1,\dots,z_n]]$ be a formal power series in the nilpotent commuting variables $z_1,\dots,z_n$. We note that the following formulas, 
\[{}^{r_0}P(z_1,z_2,\dots,z_n)=P(-z_1,z_2,\dots,z_n)\ \ \ \ \text{and}\ \ \ \ {}^{r_b}P(z_1,\dots,z_n)=P(z_1,\dots,z_{b+1},z_b,\dots,z_n)\]
define an action of the Weyl group $W(B_n)$ on $K[[z_1,\dots,z_n]]$. With this notation, Formulas (\ref{Rel:V4}) and (\ref{Rel:V8}) are equivalent, respectively, to the following formulas
\begin{equation}\label{rel:V4'}
(\psi_bP-{}^{r_b}P\psi_b)e(\ti)=\left\{\begin{array}{ll}
0 & \ \ \text{if $r_b(\ti)\neq\ti$,}\\[0.2em]
\displaystyle\frac{P-{}^{r_b}P}{y_{b+1}-y_b}e(\ti) &\ \ \text{if $r_b(\ti)=\ti$,}
\end{array}\right.\ \ \ \ \text{for $P\in K[[y_1,\dots,y_n]]$,}
\end{equation}
\begin{equation}\label{rel:V8'}
(\psi_0P-{}^{r_0}P\psi_0)e(\ti)=\left\{\begin{array}{ll}
0 & \ \ \text{if $r_0(\ti)\neq\ti$,}\\[0.2em]
\displaystyle\frac{P-{}^{r_0}P}{y_1}e(\ti) &\ \ \text{if $r_0(\ti)=\ti$,}
\end{array}\right.\ \ \ \ \text{for $P\in K[[y_1,\dots,y_n]]$\,.}
\end{equation}

\begin{remark}\label{rem-orb}
We explain here that there is a bijection between the orbits of $W(B_n)$ in $S^n$ and the set of dimension vectors, for the quiver $\Gamma_{\bl}$, of height $n$ which satisfy a certain compatibility condition with respect to the involution $\theta\ :\ i\mapsto i^{-1}$.

Let ${^\theta}\mathbb{N}S$ be the set of maps $\tilde{\beta}$ from $S$ to $\Z_{\geq0}$, with finite support, such that $\tilde{\beta}(i^{-1})=\tilde{\beta}(i)$ for every $i\in S$. Using the standard notation of formal sums, we have
\begin{equation*}
{^\theta}\mathbb{N}S:=\left\lbrace \left. \tilde{\beta} = \sum_{i\in S}\tilde{\beta}_i i \hspace{0.4em}\right|\hspace{0.4em} \tilde{\beta}_i \in \Z_{\geq0},\ \tilde{\beta}_{i^{-1}}= \tilde{\beta}_{i} \hspace{0.5em} \forall i\in S\,, \textrm{ and } |\textrm{supp}(\tilde{\beta})|<\infty \right\rbrace.
\end{equation*}
For an element $\tilde{\beta}\in{^\theta}\mathbb{N}S$, let its height be the positive integer $n$ given by
\begin{equation*}
n=\frac{1}{2}\sum_{\substack{i\in S \\ i^2\neq 1}} \tilde{\beta}_i + \sum_{\substack{i\in S \\ i^2=1}} \tilde{\beta}_i\ .
\end{equation*}
From an orbit $\beta$, construct $\tilde{\beta}\in{^\theta}\mathbb{N}S$ of height $n$ as follows. Pick $(i_1,\dots,i_n)\in\beta$ and then, for $i\in S$, set $\tilde{\beta}(i)$ to be the number of elements $i_k$ among $i_1,\dots,i_n$ such that $i_k\in\{i,i^{-1}\}$.

Conversely, to $\tilde{\beta} \in {^\theta}\mathbb{N}S$ of height $n$, we associate an orbit $\beta$ as follows:
\begin{equation*}
\beta=\left\lbrace \textbf{i}=(i_1, \ldots, i_n) \in S^n \hspace{0.4em} \left| \hspace{0.4em}  \sum_{\substack{k=1 \\ i_k^2\neq 1}}^n \bigl(i_k+i_k^{-1}\bigr) + \sum_{\substack{k=1 \\ i_k^2=1}}^n i_k=\tilde{\beta} \right\rbrace\right.\ .
\end{equation*}
It is easy to check that these procedures give the desired bijection between the orbits of $W(B_n)$ in $S^n$ and the set of dimension vectors in ${^\theta}\mathbb{N}S$ of height $n$.
\hfill$\triangle$
\end{remark}

\section{Affine Hecke algebra $\hH(B_n)$}\label{sec-def}

\subsection{Root datum}

Let $\{\eps_i\}_{i=1,\dots,n}$ be an orthonormal basis of the Euclidean space $\R^n$ with scalar product $(.,.)$ and let
\[L:=\bigoplus_{i=1}^n\Z\eps_i\ ,\ \ \ \ \alpha_0=2\eps_1\ \ \ \ \ \text{and}\ \ \ \ \ \alpha_i=\eps_{i+1}-\eps_i\,,\ i=1,\dots,n-1\ .\]
The set $\{\alpha_i\}_{i=0,\dots,n-1}$ is a set of simple roots for the root system $R=\{\pm2\eps_i,\pm\eps_i\pm\eps_j\}_{i,j=1,\dots,n}$ of type B. For a root $\alpha\in R$, we identify its coroot $\alpha^{\vee}$ with the following element of $\text{Hom}_{\Z}(L,\Z)$:
\[\alpha^{\vee}\ :\ x\mapsto 2\frac{(\alpha,x)}{(\alpha,\alpha)}\ .\]
To each root $\alpha\in R$ is associated the element $r_{\alpha}\in\text{End}_{\Z}(L)$ defined by
\begin{equation}\label{act-W0}
r_{\alpha}(x):=x-\alpha^{\vee}(x)\,\alpha\ \ \ \ \ \ \text{for any $x\in L$.}
\end{equation}
The group generated by $r_{\alpha}$, $\alpha\in R$, is identified with the Weyl group $W(B_n)$ associated to $R$. Let $r_0,r_1,\dots,r_{n-1}$ be the elements of $W(B_n)$ corresponding to the simple roots $\alpha_0,\alpha_1,\dots,\alpha_{n-1}$.

\subsection{Finite Hecke algebras} 
Set $q_0:=p$ and $q_i:=q$ for $i=1,\dots,n-1$. The finite Hecke algebra $H(B_n)$ is the associative $K$-algebra with unit generated by elements $g_0,g_1,\dots,g_{n-1}$ satisfying the defining relations:
\begin{equation}\label{rel-H0}
\begin{array}{ll}
g_i^2=(q_i-q_i^{-1})g_i+1 & \text{for}\ i\in\{0,\dots,n-1\}\,,\\[0.2em]
g_0g_1g_0g_1=g_1g_0g_1g_0\,,\ \ \  & \\[0.2em]
g_ig_{i+1}g_i=g_{i+1}g_ig_{i+1}\ \ \  & \text{for $i\in\{1,\dots,n-2\}$}\,,\\[0.2em]
g_ig_j=g_jg_i\ \ \  & \text{for $i,j\in\{0,\dots,n-1\}$ such that $|i-j|>1$}\,.
\end{array}
\end{equation} 

For any element $w\in W(B_n)$, let $w=r_{a_1}\dots r_{a_k}$ be a reduced expression for $w$ in terms of the generators $r_0,\dots,r_{n-1}$. We define $g_w:=g_{a_1}\dots g_{a_k}\in H(B_n)$. This definition does not depend on the reduced expression for $w$ and and it is a standard fact that the set of elements
\begin{equation}\label{base-H0}
\{g_w\,,\ \ \ w\in W(B_n)\}
\end{equation} 
forms a $K$--basis for $H(B_n)$.

\subsection{Affine Hecke algebra of type B}

We denote by $\hH(B_n)$ the affine Hecke algebra associated to the above root datum. The algebra $\hH(B_n)$ is the associative $K$-algebra generated by elements 
$$g_0,g_1,\dots,g_{n-1}\ \ \ \ \text{and}\ \ \ \ X^x,\ \ x\in L\ ,$$
subject to the defining relations $X^0=1$, $X^xX^{x'}=X^{x+x'}$ for any $x,x'\in L$, and Relations (\ref{rel-H0}) together with
\begin{equation}\label{rel-Lu}
g_iX^x-X^{r_i(x)}g_i=(q_i-q_i^{-1})\frac{X^x-X^{r_i(x)}}{1-X^{-\alpha_i}}\ ,
\end{equation}
for any $x\in L$ and $i=0,1,\dots,n-1$. Note that $\displaystyle\frac{X^x-X^{r_i(x)}}{1-X^{-\alpha_i}}\in \hH(B_n)$ since $r_i(x)=x-k\alpha_i$ for some $k\in\mathbb{Z}$.

Let $X_i:=X^{\eps_i}$ for $i=1,\dots,n$, so that $\{X^x\}_{x\in L}=\{X_1^{a_1}\dots X_n^{a_n}\}_{a_1,\dots,a_n\in\Z}$. Then the algebra $\hH(B_n)$ is generated by elements 
$$g_0,g_1,\dots,g_{n-1},X_1^{\pm1},\dots,X_n^{\pm1}\,,$$
and it is easy to check that a set of defining relations is (\ref{rel-H0}) together with
\begin{equation}\label{rel-H}
\begin{array}{ll}
X_iX_j=X_jX_i & \text{for}\ i,j\in\{1,\dots,n\}\,,\\[0.2em]
g_0X_1^{-1}g_0=X_1\,, & \\[0.2em]
g_iX_ig_i=X_{i+1}\ \ \  & \text{for $i\in\{1,\dots,n-1\}$}\,,\\[0.2em]
g_iX_j=X_jg_i\ \ \  & \text{for $i\in\{0,\dots,n-1\}$ and $j\in\{1,\dots,n\}$ such that $j\neq i,i+1$}\,.
\end{array}
\end{equation} 
It is a standard fact that the following sets of elements are $K$-bases of $\hH(B_n)$:
\begin{equation}\label{base-aff}
\{X^xg_w\}_{x\in L,\,w\in W(B_n)}\ \ \ \ \text{and}\ \ \ \ \{g_wX^x\}_{x\in L,\,w\in W(B_n)} .
\end{equation}
It follows in particular that the (multiplicative) group formed by elements $X^x$, $x\in L$, of $\hH(B_n)$ can be identified with the (additive) group $L$. It follows also that the subalgebra generated by $g_0,\dots,g_{n-1}$ is isomorphic to the finite Hecke algebra $H(B_n)$.

\paragraph{Notation $\boldsymbol{{}^{r_a} P}$.} Let $b\in\{1,\dots,n-1\}$ and $P\in K[Z_1^{\pm1},\dots,Z_n^{\pm1}]$ a Laurent polynomial in the commuting invertible variables $Z_1,\dots,Z_n$. We note that the following formulas, 
\[{}^{r_0}P=P(Z_1^{-1},Z_2,\dots,Z_n)\ \ \ \ \text{and}\ \ \ \ {}^{r_b}P=P(Z_1,\dots,Z_{b+1},Z_b,\dots,Z_n)\]
define an action of the Weyl group $W(B_n)$ on $K[Z_1^{\pm1},\dots,Z_n^{\pm1}]$. With this notation, Formula (\ref{rel-Lu}) is equivalent to
\begin{equation}\label{rel-Lu'}
g_iP-{}^{r_i}Pg_i=(q_i-q_i^{-1})\frac{P-{}^{r_i}P}{1-X^{-\alpha_i}}\ ,\ \  \ \ \ \text{for $i=0,1,\dots,n-1$ and $P\in K[X_1^{\pm1},\dots,X_n^{\pm1}]$.}
\end{equation}

\paragraph{Affine Hecke algebra of type A.} The subalgebra of $\hH(B_n)$ generated by $g_1,\dots,g_{n-1},X_1^{\pm1},\dots,X^{\pm1}_n$ is an affine Hecke algebra of type A (associated to the root system $A_{n-1}$). This is the one most studied, corresponding to the reductive group $GL_n$. We denote it $\hH(A_n)$.

A presentation of the algebra $\hH(A_n)$ is given by generators (we keep the same names, this should not lead to any confusion)
\[g_1,\dots,g_{n-1},X_1^{\pm1},\dots,X_n^{\pm1}\ ,\]
satisfying the defining relations of $\hH(B_n)$ which do not involve the generator $g_0$.

\subsection{Intertwining elements}\label{subsec-int}

 For $i\in\{0,1\dots,n-1\}$, let
\begin{equation}\label{def-U}
U_i:=g_i(1-X^{-\alpha_i})-(q_i-q_i^{-1})\in\hH(B_n)\ .
\end{equation}
Using (\ref{rel-Lu}), alternative expressions for elements $U_i$ are
\[U_i=g_i^{-1}-g_iX^{-\alpha_i}=(1-X^{\alpha_i})g_i+(q_i-q_i^{-1})X^{\alpha_i}=g_i-X^{\alpha_i}g_i^{-1}\ .\]
It is well-known that the elements $U_i$ satisfy the following relations:
\begin{equation}\label{rel-Ui}
\begin{array}{ll}
U_iX^x=X^{r_i(x)}U_i & \text{for $i\in\{0,\dots,n-1\}$ and $x\in L$}\,,\\[0.2em]
U_0U_1U_0U_1=U_1U_0U_1U_0\,,\ \ \  & \\[0.2em]
U_iU_{i+1}U_i=U_{i+1}U_iU_{i+1}\ \ \  & \text{for $i\in\{1,\dots,n-2\}$}\,,\\[0.2em]
U_iU_j=U_jU_i\ \ \  & \text{for $i,j\in\{0,\dots,n-1\}$ such that $|i-j|>1$}\,,\\[0.2em]
U_i^2=(q_i-X^{\alpha_i}q_i^{-1})(q_i-X^{-\alpha_i}q_i^{-1}) & \text{for}\ i\in\{0,\dots,n-1\}\,.
\end{array}
\end{equation} 

Let $i\in\{0,1,\dots,n-1\}$ and $h\in\hH(B_n)$. It is easy to check using the basis in (\ref{base-aff}) of $\hH(B_n)$ that $(1-X^{-\alpha_i})h=0$ implies that $h=0$, and similarly for $h(1-X^{-\alpha_i})$. So let $\hH(B_n)^{loc}$ denote the localisation of the algebra $\hH(B_n)$ over the multiplicative set generated by the elements $1-X^{-\alpha_i}$ for $i=0,1,\dots,n-1$. Setting
\[U'_i:=U_i(1-X^{-\alpha_i})^{-1}\in\hH(B_n)^{loc}\ ,\]
one checks easily that the elements $U'_i$ satisfy the same relations as in (\ref{rel-Ui}) except the last one which now becomes
\[(U'_i)^2=\frac{(q_i-q_i^{-1}X^{\alpha_i})(q_i-q_i^{-1}X^{-\alpha_i})}{(1-X^{\alpha_i})(1-X^{-\alpha_i})}\ \ \ \ \text{for}\ i\in\{0,1\dots,n-1\}\ .\]

\subsection{Eigenvalues of $X_1,\dots,X_n$ in $\hH(B_n)$-modules}

Let $I$ be a finite subset $I\subset K\backslash\{0\}$ and, for $k\in\{1,\dots,n\}$, let $I_k=\{\lambda q^{2l}\ |\ \lambda\in I\,,\ \ 0\leq |l|\leq k-1\ \}$. Note in the following propositions that $M$ need not be finite-dimensional.

\begin{proposition}\label{prop-eig1}
Let $M$ be a $\hH(A_n)$-module, and let $P(X)=\displaystyle\prod_{\lambda \in I}(X-\lambda)^{m(\lambda)}$ for some $m(\lambda)\in\Z_{>0}$. Assume that $P(X_1)$ acts as $0$ on $M$.
Then, for $k\in\{1,\dots,n\}$, there is a polynomial of the form
\[Q_k(X)=\prod_{\mu\in I_k}(X-\mu)^{m_k(\mu)}\ ,\ \ \ \text{for some $m_k(\mu)\in\Z_{\geq0}$,}\]
such that $Q_k(X_k)$ acts as $0$ on $M$. In particular, as a $K[X_1^{\pm1},\dots,X_n^{\pm1}]$-module, we have
\[M=\bigoplus_{\textbf{i}\in I_1\times\dots\times I_n} M_{\textbf{i}}\ ,\]
where $M_{\textbf{i}}:=\{v\in M\ |\ \forall k=1,\dots,n\,,\ \bigl(X_k-i_k\bigr)^Nv=0\ \text{for some $N>0$}\}$  for $\textbf{i}=(i_1,\dots,i_n)$.
\end{proposition}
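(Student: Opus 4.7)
The plan is to proceed by induction on $k$. The base case $k=1$ is exactly the hypothesis, since $I_1=I$ and one may take $Q_1=P$. For the inductive step, assume that a polynomial $Q_k$ with roots in $I_k$ satisfies $Q_k(X_k)=0$ on $M$, and construct $Q_{k+1}$ with roots in $I_{k+1}$ such that $Q_{k+1}(X_{k+1})=0$ on $M$.

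The key ingredient is the intertwiner $U_k\in\hH(A_n)\subseteq\hH(B_n)$ from Section~\ref{subsec-int}. The intertwining relation $U_kX_{k+1}=X_kU_k$ gives $U_kF(X_{k+1})=F(X_k)U_k$ for every polynomial $F$; specialising to $F=Q_k$ and using $Q_k(X_k)=0$ yields $U_kQ_k(X_{k+1})=0$ on $M$, i.e.\ $Q_k(X_{k+1})v\in\ker(U_k)$ for every $v\in M$. From the last of relations~(\ref{rel-Ui}) one computes
\[U_k^2=-X_k^{-1}X_{k+1}^{-1}(X_{k+1}-q^2X_k)(X_{k+1}-q^{-2}X_k),\]
and since $X_k$, $X_{k+1}$ are invertible, $\ker(U_k)\subseteq\ker(U_k^2)=\ker\bigl[(X_{k+1}-q^2X_k)(X_{k+1}-q^{-2}X_k)\bigr]$. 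Combining these two facts gives the identity
\[(X_{k+1}-q^2X_k)(X_{k+1}-q^{-2}X_k)\,Q_k(X_{k+1})=0\quad\text{on }M.\]

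To turn this into a polynomial identity in $X_{k+1}$ alone, decompose $M=\bigoplus_{\mu}M_{\mu}$ by the Chinese Remainder Theorem, where $\mu$ runs over the roots of $Q_k$ (each lying in $I_k$) and $M_{\mu}=\ker(X_k-\mu)^{m_{\mu}}$. On $M_{\mu}$, write $X_k=\mu+N$ with $N^{m_{\mu}}=0$; expanding the identity above yields, on $M_{\mu}$,
\[(X_{k+1}-q^2\mu)(X_{k+1}-q^{-2}\mu)\,Q_k(X_{k+1})=N\cdot C,\]
where $C=\bigl[(q^2+q^{-2})X_{k+1}-2\mu-N\bigr]\,Q_k(X_{k+1})$. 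Letting $A$ denote the left-hand side, the operators $A$, $C$, $N$ are polynomials in the commuting variables $X_k,X_{k+1}$ and so pairwise commute, whence $A^{m_{\mu}}=N^{m_{\mu}}C^{m_{\mu}}=0$ on $M_{\mu}$. Defining $Q_{k+1}(X):=\prod_{\mu}\bigl[(X-q^2\mu)(X-q^{-2}\mu)\,Q_k(X)\bigr]^{m_{\mu}}$, its roots lie in $I_k\cup q^2 I_k\cup q^{-2}I_k\subseteq I_{k+1}$, and $Q_{k+1}(X_{k+1})$ vanishes on each $M_{\mu}$, hence on $M$.

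The decomposition $M=\bigoplus_{\textbf{i}}M_{\textbf{i}}$ of the proposition then follows by iterating the CRT for the commuting operators $X_1,\dots,X_n$, each now known to satisfy a polynomial relation with the required constraints on its roots. The main technical obstacle is the passage through the intertwiner: the identity $U_kQ_k(X_{k+1})=0$ is not itself a polynomial identity in $X_{k+1}$, and the trick is to combine the automatic inclusion $\ker(U_k)\subseteq\ker(U_k^2)$ with the explicit polynomial formula for $U_k^2$ to extract one. After this, the propagation of the nilpotent factor $N$ through the commuting polynomial algebra to conclude $A^{m_{\mu}}=0$ is routine.
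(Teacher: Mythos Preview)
Your proof is correct and follows essentially the same approach as the paper's. Both arguments use the intertwiner $U_k$ to deduce that $(X_{k+1}-q^2X_k)(X_{k+1}-q^{-2}X_k)Q_k(X_{k+1})=0$ on $M$ (the paper reaches this via the conjugation $U_kQ_k(X_k)U_k=Q_k(X_{k+1})U_k^2$, you via $\ker U_k\subseteq\ker U_k^2$), and both then pass to the generalised $X_k$-eigenspaces and use nilpotence of $X_k-\mu$ there; your identity $A=NC\Rightarrow A^{m_\mu}=0$ is exactly the paper's descending filtration argument $P_\mu(X_{k+1}):\ker(X_k-\mu)^j\to\ker(X_k-\mu)^{j-1}$, rephrased.
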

\begin{proof}
We use induction on $k$. Let $k\in\{1,\dots,n-1\}$ and assume that some polynomial $Q_k(X_k)=\prod_{\mu\in I_k}(X_k-\mu)^{m_k(\mu)}$ acts as $0$ on $M$ (this is true for $k=1$ by assumption). Then, as $X_k,X_{k+1}$ commute, the module $M$ decomposes for the action of $X_k$ and $X_{k+1}$ as
\[M=\bigoplus_{\mu\in I_k}\text{Ker}(X_k-\mu\text{Id}_M)^{m_k(\mu)}\ .\]
For $\mu\in I_k$, we set $P_{\mu}(X)=Q_k(X)(X-q^2\mu)(X-q^{-2}\mu)$ and we will show that $\bigl(P_{\mu}(X_{k+1})\bigr)^{m_k(\mu)}$ acts as $0$ on $\text{Ker}(X_k-\mu\text{Id}_M)^{m_k(\mu)}$.

First, let $v\in \text{Ker}(X_k-\mu\text{Id}_M)$. We have
\[P_{\mu}(X_{k+1})(v)=Q_k(X_{k+1})(X_{k+1}-q^2X_k)(X_{k+1}-q^{-2}X_k)(v)\ ,\]
since $X_k(v)=\mu v$ and $X_k$ and $X_{k+1}$ commute. We note that in $\hH(A_n)$, thanks to \eqref{rel-Ui}, we have
\[U_kQ_k(X_k)U_k=Q(X_{k+1})U_k^2=-Q(X_{k+1})(X_{k+1}-q^2X_k)(X-q^{-2}X_k)\ .\]
This shows that $P_{\mu}(X_{k+1})(v)$ is $0$ since $Q_k(X_k)$ acts as $0$ on $M$.

Then, let $v\in \text{Ker}(X_k-\mu\text{Id}_M)^j$ with $2\leq j\leq m_k(\mu)$. So we have $(X_k-\mu\text{Id}_M)^{j-1}(v)\in \text{Ker}(X_k-\mu\text{Id}_M)$ and therefore we have 
\[P_{\mu}(X_{k+1})(X_k-\mu\text{Id}_M)^{j-1}(v)=0\ ,\]
using the preceding step. As $X_k$ and $X_{k+1}$ commute, we find that $P_{\mu}(X_{k+1})(v)\in \text{Ker}(X_k-\mu\text{Id}_M)^{j-1}$ and, using induction on $j$, this yields $\bigl(P_{\mu}(X_{k+1})\bigr)^{j-1}P_{\mu}(X_{k+1})(v)=0$ which is the desired result.

The rest of the proposition is immediate.
\end{proof}

For a $\hH(B_n)$-module, we can use the presence of $g_0$ to obtain further information on the minimal polynomial of $X_1$.
\begin{proposition}\label{prop-eig2}
Let $M$ be a $\hH(B_n)$-module, and let $P(X)=\displaystyle\prod_{\lambda \in I}(X-\lambda)^{m(\lambda)}$ for some finite subset $I\subset K\backslash\{0\}$ and some $m(\lambda)\in\Z_{>0}$. Assume that $P(X_1)$ acts as $0$ on $M$. Then the minimal polynomial of $X_1$ on $M$ is of the form
\[Q(X)=\prod_{\lambda\in I}(X-\lambda)^{m'(\lambda)}\ ,\ \ \ \text{with}\ \ \left\{\begin{array}{ll}
m'(\lambda^{-1})=m'(\lambda) & \text{if $\lambda\notin\{\pm p^{\pm1}\}$;}\\[0.4em]
|m'(\lambda^{-1})-m'(\lambda)|\leq 1 & \text{if $\lambda\in\{\pm p^{\pm1}\}$.}
\end{array}\right. \]
\end{proposition}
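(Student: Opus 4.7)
The plan is to combine two facts about the minimal polynomial $Q$ of $X_1$ on $M$: since $P(X_1)=0$ one has $Q\mid P$, so $Q(X)=\prod_{\lambda\in I}(X-\lambda)^{m'(\lambda)}$ with $m'(\lambda)\geq 0$; and the intertwining element $U_0$ from Section~\ref{subsec-int} forces a (near-)symmetry between the generalised eigenspaces $M_\lambda$ and $M_{\lambda^{-1}}$ of $X_1$. Primary decomposition for $X_1$ then gives $M=\bigoplus_{\lambda\in I}M_\lambda$ with $M_\lambda=\ker((X_1-\lambda)^{m'(\lambda)})$, and $m'(\lambda)$ is precisely the nilpotency index of $(X_1-\lambda)$ on $M_\lambda$ (with $m'(\lambda)=0$ when $M_\lambda=0$). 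The task therefore reduces to comparing $m'(\lambda)$ with $m'(\lambda^{-1})$.

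The key identities are the intertwining relation $U_0 X_1=X_1^{-1}U_0$ (which follows from $U_0 X^x=X^{r_0(x)}U_0$ in \eqref{rel-Ui} together with $r_0(\eps_1)=-\eps_1$), its iterated form
\[(X_1-\lambda^{-1})^k\,U_0=(-\lambda^{-1})^k\,U_0\,X_1^{-k}(X_1-\lambda)^k\qquad(k\geq 0),\]
proved by induction on $k$, and the computation
\[U_0^2=-X_1^{-2}(X_1-p)(X_1+p)(X_1-p^{-1})(X_1+p^{-1})\]
extracted from the last relation in \eqref{rel-Ui} with $q_0=p$ and $X^{\alpha_0}=X_1^2$. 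The first identity already yields $U_0(M_\lambda)\subseteq M_{\lambda^{-1}}$.

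If $\lambda\notin\{\pm p^{\pm1}\}$, each linear factor in the expression for $U_0^2$ has nonzero scalar part on $M_\lambda$ (and on $M_{\lambda^{-1}}$), so $U_0^2$ acts invertibly on both of these generalised eigenspaces. Hence $U_0\colon M_\lambda\to M_{\lambda^{-1}}$ is a bijection; the iterated intertwining formula, combined with invertibility of $X_1^{-k}$ on $M_\lambda$, gives $(X_1-\lambda)^k v=0\Leftrightarrow (X_1-\lambda^{-1})^k U_0 v=0$ for every $v\in M_\lambda$, so $m'(\lambda)=m'(\lambda^{-1})$.

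If $\lambda\in\{\pm p^{\pm1}\}$, then $\lambda\neq\lambda^{-1}$ (since $p^2\neq1$), and on $M_\lambda$ the factor $(X_1-\lambda)$ is the distinguished nilpotent linear factor appearing in $U_0^2$, the other three factors combining with $X_1^{-2}$ into an invertible operator commuting with $X_1$; thus $U_0^2$ has nilpotency index $m'(\lambda)$ on $M_\lambda$ and $m'(\lambda^{-1})$ on $M_{\lambda^{-1}}$. Describing $U_0$ by its off-diagonal blocks $A\colon M_{\lambda^{-1}}\to M_\lambda$ and $B\colon M_\lambda\to M_{\lambda^{-1}}$ gives $U_0^2|_{M_\lambda}=AB$ and $U_0^2|_{M_{\lambda^{-1}}}=BA$, and the elementary identities $(AB)^{k+1}=A(BA)^k B$ and $(BA)^{k+1}=B(AB)^k A$ force the nilpotency indices of $AB$ and $BA$ to differ by at most $1$, yielding $|m'(\lambda)-m'(\lambda^{-1})|\leq 1$. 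The main obstacle is precisely this second case: since both $AB$ and $BA$ are nilpotent one cannot appeal to the classical coincidence of nonzero spectra of $AB$ and $BA$, and the bound must instead be extracted from the algebraic identities above together with a precise identification of the nilpotent part of $U_0^2$ on each of $M_\lambda$ and $M_{\lambda^{-1}}$.
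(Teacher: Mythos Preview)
Your argument is correct, but it follows a different path from the paper's. The paper exploits the identity $U_0\,Q(X_1)\,U_0=U_0^2\,Q(X_1^{-1})$ directly: since the minimal polynomial $Q(X_1)$ vanishes on $M$, so does $U_0^2\,Q(X_1^{-1})$. Expanding $U_0^2=(p-p^{-1}X_1^2)(p-p^{-1}X_1^{-2})$ and clearing invertible factors shows that $Q(X)$ must divide $(X^2-p^2)(X^2-p^{-2})\prod_{\lambda}(X-\lambda^{-1})^{m'(\lambda)}$; comparing the multiplicity of each root $\mu$ on both sides yields $m'(\mu)\le m'(\mu^{-1})$ when $\mu\notin\{\pm p^{\pm1}\}$ and $m'(\mu)\le m'(\mu^{-1})+1$ otherwise, and symmetry in $\mu\leftrightarrow\mu^{-1}$ finishes.

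Your route instead passes through the primary decomposition of $M$, reads off $m'(\lambda)$ as a nilpotency index, and treats the two cases structurally: bijectivity of $U_0$ between $M_\lambda$ and $M_{\lambda^{-1}}$ in the generic case, and the $AB$/$BA$ nilpotency-index inequality in the special case. This gives more insight into \emph{why} the symmetry holds (and why it can break by at most one), at the price of a longer argument; the paper's divisibility computation is essentially two lines and never needs to decompose $M$ or isolate any eigenspace. Both proofs rest on the same two ingredients---the intertwining relation $U_0X_1=X_1^{-1}U_0$ and the explicit factorisation of $U_0^2$---so the difference is one of packaging rather than of underlying mechanism.
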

\begin{proof}
In $\hH(B_n)$, we have
\[U_0P(X_1)U_0=U_0^2P(X_1^{-1})=(p-p^{-1}X_1^2)(p-p^{-1}X_1^{-2})\prod_{\lambda \in I}(X_1^{-1}-\lambda)^{m(\lambda)}\ .\]
This element must act as $0$ on $M$ since $P(X_1)$ acts as $0$ on $M$. Multiplying by the invertible element $p^2X_1^2\prod_{\lambda\in I}(\lambda^{-1}X_1)^{m(\lambda)}$, we get that the minimal polynomial of $X_1$ on $M$ divides the following polynomial
\[(X^2-p^2)(X^2-p^{-2})\prod_{\lambda \in I}(X-\lambda^{-1})^{m(\lambda)}\ .\]
Taking for $P$ the minimal polynomial of $X_1$ on $M$, this proves the proposition.
\end{proof}

\section{Cyclotomic quotients of $\hH(B_n)$ and $e_{\beta}H(B_n)_{\blambda,m}$}\label{sec-cyc}

\subsection{Definition}

For any $\lambda\in K\setminus\{0\}$, let $I_{\lambda}=\{\lambda^{\epsilon}q^{2l}\ |\ \epsilon\in\{-1,1\}\,,\ \ l\in\Z\}$. Fix an $l$-tuple $\bl=(\lambda_1,\dots,\lambda_l)$ of elements of $K\setminus\{0\}$ such that we have $I_{\lambda_a}\cap I_{\lambda_b}=\emptyset$ if $a\neq b$. Then fix a multiplicity map
\begin{equation}\label{def-m}
m\ :\ I_{\lambda_1}\cup\dots\cup I_{\lambda_l}\to \Z_{\geq0}\ ,
\end{equation}
such that only a finite number of multiplicities $m(i)$ are different from $0$.

We define the cyclotomic quotient denoted $H(B_n)_{\bl,m}$ to be the quotient of the algebra $\hH(B_n)$ over the relation
\begin{equation}\label{cyc-rel}
\prod_{i\in I_{\lambda_1}\cup\dots\cup I_{\lambda_l}}(X_1-i)^{m(i)}=0\ .
\end{equation}
\begin{remark}
\begin{itemize} \item It is explained in \cite{EK} that for the study of irreducible representations of $\hH(B_n)$, one can assume that the elements $X_1,\dots,X_n$ have all their eigenvalues included in a single set of the form $I_{\lambda}$, for an arbitrary $\lambda$. Thus, for the study of irreducible representations of $\hH(B_n)$, it would be enough to consider the particular situation $l=1$, namely where $\bl$ is a single $\lambda$.

\item Proposition  \ref{prop-eig1} has the following consequence. All the eigenvalues of $X_1,\dots,X_n$ are included in $I_{\lambda_1}\cup\dots\cup I_{\lambda_l}$. More precisely, if the eigenvalues of $X_1$ are included in $\{\lambda_a^{\epsilon}q^{2N}\ |\ a\in\{1,\dots,l\}\,,\ \epsilon\in\{-1,1\}\,,\ 0\leq |N|\leq L\}$ then all the eigenvalues of $X_1,\dots,X_n$ are included in the finite subset $\{\lambda_a^{\epsilon}q^{2N}\ |\ a\in\{1,\dots,l\}\,,\ \epsilon\in\{-1,1\}\,,\ 0\leq |N|\leq L+n-1\}$.

\item Proposition \ref{prop-eig2} has the following consequence; the cyclotomic quotient $H(B_n)_{\bl,m}$ is isomorphic to the cyclotomic quotient $H(B_n)_{\bl,m'}$, where the multiplicity map $m'$ is given as follows:
\[m'(i)=\left\{\begin{array}{ll}
\text{min}\bigl(m(i),m(i^{-1})\bigr) & \text{if $i\notin\{\pm p^{\pm1}\}$;}\\[0.4em]
\text{min}\bigl(m(i),m(i^{-1})+1\bigr) & \text{if $i\in\{\pm p^{\pm1}\}$.}
\end{array}\right.\]

\item We have that $H(B_n)_{\blambda,m}$ is of finite dimension. Indeed, recall that the set
\[\{X_1^{a_1}\dots X_n^{a_n}g_w\}_{a_1,\dots,a_n\in\Z,\,w\in B_n}\]
is a basis of $\hH(B_n)$ and therefore linearly spans the quotient $H(B_n)_{\blambda,m}$. Moreover,
as an immediate consequence of Proposition \ref{prop-eig1}, we have that each of the commuting elements $X_1,\dots,X_n$ satisfies a relation $P_k(X_k)=0$ for a polynomial $P_k\in K[X]$. So we can extract from the above spanning set a finite spanning subset (by taking $a_k\in\{0,\dots,\text{deg}(P_k)\}$).\hfill$\triangle$
\end{itemize}
\end{remark}

\subsection{Idempotents and blocks of $H(B_n)_{\blambda,m}$}\label{subsec-idem}

For $\textbf{i}=(i_1,\dots,i_n)\in \bigl(I_{\lambda_1}\cup\dots\cup I_{\lambda_l}\bigr)^n$, let 
$$M_{\textbf{i}}:=\{x\in H(B_n)_{\blambda,m}\ |\ \bigl(X_k-i_k\bigr)^Nx=0\ \ \text{for $k=1,\dots,n$ and some $N>0$}\}\ .$$
The subspace $M_{\textbf{i}}$ is a common generalised eigenspace for the action of the commuting elements $X_1,\dots,X_n$ by left multiplication on $H(B_n)_{\blambda,m}$. As a representation of the subalgebra generated by $X_1,\dots,X_n$, we have
\[H(B_n)_{\blambda,m}=\bigoplus_{\textbf{i}\in \bigl(I_{\lambda_1}\cup\dots\cup I_{\lambda_l}\bigr)^n} M_{\textbf{i}}\ ,\]
and there is a finite number of non-zero $M_{\textbf{i}}$.
Let 
\begin{equation}\label{def-ei}
\{e^H_{\textbf{i}}\}_{\textbf{i}\in \bigl(I_{\lambda_1}\cup\dots\cup I_{\lambda_l}\bigr)^n}\ ,
\end{equation}
be the associated set of mutually orthogonal idempotents. By definition, the idempotents $e^H_{\textbf{i}}$ belong to the commutative subalgebra generated by $X_1,\dots,X_n$.

\paragraph{Central idempotent and blocks.} Let $\textbf{i}=(i_1,\dots,i_n)\in \bigl(K\setminus\{0\}\bigr)^n$. It can be seen as a character $X_k\mapsto i_k$ of $K[X_1^{\pm1},\dots,X_n^{\pm1}]$. The Weyl group acts on $K[X_1^{\pm1},\dots,X_n^{\pm1}]$, and therefore on its characters. The action of the generators $r_0,r_1,\dots,r_{n-1}$ of $W(B_n)$ is given explicitly by
\[r_0(\textbf{i})=(i_1^{-1},i_2,\dots,i_n)\ \ \quad \text{and}\quad \ \ r_a(\textbf{i})=(i_1,\dots,i_{a+1},i_a,\dots,i_n)\,,\ \ a=1,\dots,n-1.\]
Let $\beta$ be an orbit of the action of $W(B_n)$ on $\bigl(I_{\lambda_1}\cup\dots\cup I_{\lambda_l}\bigr)^n$. We set
\begin{equation}\label{def-ea}
e_{\beta}=\sum_{\textbf{i}\in\beta}e^H(\textbf{i})\ .
\end{equation}
Thus, the element $e_{\beta}$ is a central idempotent of $H(B_n)_{\blambda,m}$. The set $e_{\beta}H(B_n)_{\blambda,m}$ is therefore either $\{0\}$, or a subalgebra (with unit $e_{\beta}$) which is a union of blocks of $H(B_n)_{\blambda,m}$.

\section{An intermediary presentation of $e_{\beta}H(B_n)_{\blambda,m}$}\label{sec-inter}

We define a new algebra $\cI_N$ in this section, associated to the choice of $\beta$, $\blambda$ and $m$, which will play the role of an intermediate step in our proof of the main result (Theorem \ref{theo}). Indeed we will show that it is isomorphic, on the one hand, to the algebra $V^{\beta}_{\bl,m}$ and, on the other hand, to the algebra $e_{\beta}H(B_n)_{\blambda,m}$. 

In the following definition, we make the convenient abuse of notation consisting of denoting again by 
$X_1^{\pm1},\dots,X_n^{\pm1}$ some of the generators of the algebra $\cI_N$; as they commute, elements $X^x$, for $x\in L$, are defined as well as in $\hH(B_n)$.

Let $Y,Z\in K[X^{\pm1}_1,\dots,X^{\pm1}_n]$ and $\textbf{i}\in\beta$. In the algebra $\cI_N$ defined below, we will use the following notation:
\[Y^{-1}e(\ti):=Ze(\ti)\ \ \ \ \quad \text{if $YZe(\ti)=e(\ti)$}\,,\]
that is, if $Ye(\ti)$ is invertible in the subalgebra $K[X^{\pm1}_1,\dots,X^{\pm1}_n]e(\ti)$ (with unit $e(\ti)$) of $\mathcal{I}_N$. For example, due to the defining relations (\ref{rel1-X1})-(\ref{rel1-Xk}), the  elements $(1-X^{-\alpha_a})^{-1}e(\ti)$ are defined if $r_a(\ti)\neq \ti$ (see Remark \ref{rem-defI2} below for more precision on the denominators appearing in the defining formulas).
\begin{definition}\label{defI}
Let $N>0$ be an integer. Let $\cI_N$ be the algebra generated by elements
\[\Phi_0,\Phi_1,\dots,\Phi_{n-1},\ X_1^{\pm1},\dots,X_n^{\pm1},\ \ e(\ti)\,, \ti\in\beta\ ,\]
with the following defining relations:
\begin{equation}\label{rel1-e}
\sum_{\ti\in\beta}e(\ti) =1\,,\ \ \ \ \ \  e(\ti)e(\textbf{j})=\delta_{\ti,\textbf{j}}e(\ti)\,,\ \forall \ti,\textbf{j}\in\beta\,,
\end{equation}
\begin{equation}\label{rel1-X}
X_iX_j=X_jX_i\,,\ \ \ \ \ \  X_ie(\ti)=e(\ti)X_i\,,\ \forall i,j\in\{1,\dots,n\}\ \text{and}\ \forall \ti\in\beta\,,
\end{equation}
and, for $a,b\in\{0,1,\dots,n-1\}$ with $b\neq 0$, for $x\in L$, and for $\ti=(i_1,\dots,i_n)\in\beta$,
\begin{empheq}{alignat=2}
(X_1-i_1)^{m(i_1)}e(\ti) & = 0\ , \label{rel1-X1}\\[0.5em]
(X_k-i_k)^{N}e(\ti) & = 0\ , \qquad  \text{for $k=2,\dots,n$,}\label{rel1-Xk}\\[0.5em]
\Phi_a e(\ti)& =  e(r_a(\ti))\Phi_a\ ,\label{rel1-Phi1}\\[0.5em]
(\Phi_a X^x-X^{r_a(x)}\Phi_a) e(\ti) & = \left\{\begin{array}{ll}
0 & \ \ \text{if $r_a(\ti)\neq\ti$,}\\[0.2em]
(q_a-q_a^{-1} X^{-\alpha_a})\displaystyle\frac{X^x-X^{r_a(x)}}{1-X^{-\alpha_a}}e(\ti) &\ \ \text{if $r_a(\ti)=\ti$,}
\end{array}\right. \label{rel1-Phi2}\\[0.5em]
\Phi_a\Phi_b & =\Phi_b\Phi_a  \qquad \text{if $|a-b|>1$\ ,}\label{rel1-Phi3}\\[0.5em]
\Phi^2_a e(\ti) & = \left\{\begin{array}{ll}
\Gamma_a e(\ti) & \ \ \text{if $r_a(\ti)\neq\ti$,}\\[0.4em]
(q_a+q_a^{-1})\Phi_ae(\ti) &\ \ \text{if $r_a(\ti)=\ti$,}
\end{array}\right.\label{rel1-Phi4}
\end{empheq}
where $\Gamma_a=\displaystyle\frac{(q_a-q_a^{-1}X^{\alpha_a})(q_a-q_a^{-1}X^{-\alpha_a})}{(1-X^{\alpha_a})(1-X^{-\alpha_a})}=1+\frac{(q_a-q_a^{-1})^2}{(1-X^{\alpha_a})(1-X^{-\alpha_a})}\ $;

\begin{empheq}{alignat=2}
\bigl(\Phi_b\Phi_{b+1}\Phi_b-\Phi_{b+1}\Phi_b\Phi_{b+1}\bigr) e(\ti) & = 
\left\{\begin{array}{ll}
\bigl(\Phi_b-\Phi_{b+1}\bigr)e(\ti) & \ \ \text{if $i_b=i_{b+1}=i_{b+2}$,}\\[0.4em]
Z_b e(\ti) & \ \ \text{if $i_b=i_{b+2}\neq i_{b+1}$,}\\[0.4em]
0 & \ \ \text{otherwise,}
\end{array}\right.\label{rel1-Phi5}
\end{empheq}
where $Z_b =\displaystyle(\Gamma_b-\Gamma_{b+1})\frac{q-q^{-1} X^{-\alpha_b-\alpha_{b+1}}}{1-X^{-\alpha_b-\alpha_{b+1}}}\ $;

\begin{empheq}{alignat=2}
\bigl(\Phi_0\Phi_1\Phi_0\Phi_1-\Phi_1\Phi_0\Phi_1\Phi_0\bigr) e(\ti) & = 
\left\{\begin{array}{ll}
(pq^{-1}+p^{-1}q)\bigl(\Phi_0\Phi_1-\Phi_1\Phi_0\bigr)e(\ti) & \ \ \text{case 1}\\[0.6em]
\Bigl(\displaystyle\Phi_0(1-X^{-\alpha_0})-(p-p^{-1}X^{-\alpha_0})\Bigr)Y_0e(\ti) & \ \ \text{case 2}\\[0.8em]
\Phi_0(1-X^{-\alpha_0}) Y_0e(\ti) & \ \ \text{case 3}\\[0.6em]
-\Phi_1(1-X^{-\alpha_1}) Y_1 e(\ti) & \ \ \text{case 4}\\[0.4em]
0 & \ \ \text{otherwise,}
\end{array}\right.\label{rel1-Phi6}
\end{empheq}
where 
$$Y_0 =\displaystyle(\Gamma_1-{}^{r_0}\Gamma_1)\frac{p-p^{-1}X^{-r_1(\alpha_0)}}{(1-X^{-\alpha_0})(1-X^{-r_1(\alpha_0)})}\,,\ \ \ \ \ Y_1 =\displaystyle(\Gamma_0-{}^{r_1}\Gamma_0)\frac{q-q^{-1}X^{-r_0(\alpha_1)}}{(1-X^{-\alpha_1})(1-X^{-r_0(\alpha_1)})}\,,$$
and the different cases for Relation (\ref{rel1-Phi6}) are listed as follows:
\begin{itemize}
\item Case 1 : $i_1=i_2$ and $i_1^2=i_2^2=1$;
\item Case 2 : $i_1\neq i_2$ and $i_1^2=i_2^2=1$;
\item Case 3 : $i_1\neq i_2$ and $i_1^2\neq 1$ and $i_2^2=1$;
\item Case 4 : $i_1\neq i_2$ and $i_1^2\neq1$ and $i_1^{-1}=i_2$.
\end{itemize}
\end{definition}

\begin{remark} The integer $N$ appears only in the defining relation (\ref{rel1-Xk}). We will mainly consider the algebra $\cI_N$ for large $N$, for which it will be isomorphic to both of the algebras $e_{\beta}H(B_n)_{\blambda,m}$ and $V^{\beta}_{\bl,m}$, which do not depend on $N$. In particular, this will show that for $N$ large enough, the algebra $\cI_N$ does not depend on $N$ and Relation (\ref{rel1-Xk}) is implied by the others. Nevertheless, we consider it as a defining relation for technical convenience.\hfill$\triangle$
\end{remark}

\begin{remark}\label{rem-defI2}
Let us justify the claim that the defining relations involve only well-defined elements of $\cI_N$, even though some denominators appear. We have to explain that some elements $Y^{-1}e(\ti)$ are defined in the sense recalled just before the definition. This will be a consequence of Relations (\ref{rel1-X1})-(\ref{rel1-Xk}). First note that straightforward calculations give:
\begin{equation}\label{Z-Y}
\begin{array}{l} Z_b = \displaystyle(q-q^{-1})^2\frac{(X^{-\alpha_{b+1}}-X^{-\alpha_{b}})(q-q^{-1}X^{-\alpha_b-\alpha_{b+1}})}{(1-X^{-\alpha_b})^2(1-X^{-\alpha_{b+1}})^2}\,,\\[1em]
Y_0 =-\displaystyle(q-q^{-1})^2\frac{X^{-\alpha_1}(p-p^{-1}X^{-r_1(\alpha_0)})}{(1-X^{-\alpha_1})^2(1-X^{-r_0(\alpha_1)})^2}\,,\\[1em]
Y_1 =-(p-p^{-1})^2\displaystyle\frac{X^{-\alpha_0}(1+X^{-\alpha_1})(1+X^{-r_0(\alpha_1})(q-q^{-1}X^{-r_0(\alpha_1)})}{(1-X^{-\alpha_0})^2(1-X^{-r_1(\alpha_0)})^2}\,.
\end{array}
\end{equation}
Then we have:
\begin{itemize}
\item[-] $\displaystyle\frac{X^x-X^{r_a(x)}}{1-X^{-\alpha_a}}\in K[X_1^{\pm1},\dots,X_n^{\pm1}]$ since $r_a(x)=x-k\alpha_a$ for some $k\in\mathbb{Z}$.
\item[-] $\Gamma_a e(\ti)\in \cI_N$ if $r_a(\ti)\neq\ti$ since $(1-X^{-\alpha_a})^{-1}e(\ti)$ is defined in this case.
\item[-] Assume that $i_b=i_{b+2}\neq i_{b+1}$. Then (\ref{Z-Y}) shows that $Z_be(\ti)\in\cI_N$ since $(1-X^{-\alpha_b})^{-1}e(\ti)$ and $(1-X^{-\alpha_{b+1}})^{-1}e(\ti)$ are defined.
\item[-] Assume that $i_1\neq i_{2}$ and $i_1^{-1}\neq i_2$. Then (\ref{Z-Y}) shows that $Y_0e(\ti)\in\cI_N$ since $(1-X^{-\alpha_1})^{-1}e(\ti)$ and $(1-X^{-r_0(\alpha_{1})})^{-1}e(\ti)=(1-X_1X_2)^{-1}e(\ti)$ are defined.
\item[-] Assume that $i_1^2\neq 1$ and $i_2^{2}\neq 1$. Then (\ref{Z-Y}) shows that $Y_1e(\ti)\in\cI_N$ since $(1-X^{-\alpha_0})^{-1}e(\ti)$ and $(1-X^{-r_1(\alpha_{0})})^{-1}e(\ti)=(1-X_2^{-2})^{-1}e(\ti)$ are defined. \hfill$\triangle$
\end{itemize}
\end{remark}

\section{A family of isomorphisms in type A}\label{sec-A}

\paragraph{Cyclotomic quotient of $\hH(A_n)$.} Let the notation be as in Section \ref{sec-cyc}. We define the cyclotomic quotient, denoted $H(A_n)_{\blambda,m}$, to be the quotient of the algebra $\hH(A_n)$ over the relation
\begin{equation}\label{cyc-relA}
\prod_{i\in I_{\lambda_1}\cup\dots\cup I_{\lambda_l}}(X_1-i)^{m(i)}=0\ .
\end{equation}
Exactly as in Section \ref{sec-cyc}, we consider the set of mutually orthogonal idempotents\begin{equation}\label{def-ei-A}
\{e^H_{\textbf{i}}\}_{\textbf{i}\in \bigl(I_{\lambda_1}\cup\dots\cup I_{\lambda_l}\bigr)^n}\subset H(A_n)_{\blambda,m}\ ,
\end{equation}
associated to the decomposition
\[H(A_n)_{\blambda,m}=\bigoplus_{\textbf{i}\in \bigl(I_{\lambda_1}\cup\dots\cup I_{\lambda_l}\bigr)^n} M_{\textbf{i}}\ ,\]
for the action of $X_1,\dots,X_n$ by left multiplication, where, for $\textbf{i}=(i_1,\dots,i_n)\in \bigl(I_{\lambda_1}\cup\dots\cup I_{\lambda_l}\bigr)^n$,
$$M_{\textbf{i}}:=\{x\in H(A_n)_{\blambda,m}\ |\ \bigl(X_k-i_k\bigr)^Nx=0\ \ \text{for $k=1,\dots,n$ and some $N>0$}\}\ .$$

\paragraph{Central idempotent and blocks.} Let $\alpha$ be a finite union of orbits for the action of the symmetric group $\mathfrak{S}_n$ on $\bigl(I_{\lambda_1}\cup\dots\cup I_{\lambda_l}\bigr)^n$. We set
\begin{equation}\label{def-eb}
e_{\alpha}=\sum_{\textbf{i}\in\alpha}e^H(\textbf{i})\in H(A_n)_{\blambda,m}\ .
\end{equation}
The element $e_{\alpha}$ is a central idempotent of $H(A_n)_{\blambda,m}$.

\begin{remark} If we take $l=1$, $\lambda_1=1$ and $\alpha$ a single orbit, then $e_{\alpha}H(A_n)_{\blambda,m}$ is a single block (if non-zero) of $H(A_n)_{\blambda,m}$ as considered in \cite{BK}. We need the more general situation, in particular the situation in which $\alpha$ is an orbit for the action of the Weyl group $W(B_n)$ on $\bigl(I_{\lambda_1}\cup\dots\cup I_{\lambda_l}\bigr)^n$.\hfill$\triangle$
\end{remark}

\subsection{Generalisation of the Brundan--Kleshchev isomorphism}\label{sec-proofA}

Let $\tR^{\alpha}_{\bl,m}$ be the algebra generated by elements
\[\{\psi_1,\dots,\psi_{n-1}\}\cup\{y_1,\dots,y_n\}\cup\{e(\ti)\,,\ \ \ti\in \alpha\}\ ,\]
with the same defining relations as for the algebra $V^{\beta}_{\bl,m}$, replacing $\beta$ by $\alpha$ and removing the relations involving $\psi_0$.

Let $N>0$ and let $\cI_{N,A}$  be the algebra generated by elements
\[\{\Phi_1,\dots,\Phi_{n-1}\}\cup\{X^{\pm1}_1,\dots,X^{\pm1}_n\}\cup\{e(\ti)\,,\ \ \ti\in \alpha\}\ ,\]
with the same defining relations as for the algebra $\cI_N$, replacing $\beta$ by $\alpha$ and removing the relations involving $\Phi_0$. The fact that some generators of different algebras share the same name should not lead to any confusion.

\begin{theorem}\label{theoA}
The algebra $e_{\alpha}H(A_n)_{\blambda,m}$ is isomorphic to $\tR^{\alpha}_{\blambda,m}$. In fact we have:
\begin{enumerate}
\item The algebra $\tR^{\alpha}_{\blambda,m}$ is isomorphic to the algebra $\cI_{N,A}$ for $N$ large enough.
\item The algebra $e_{\alpha}H(A_n)_{\blambda,m}$ is isomorphic to the algebra $\cI_{N,A}$ for $N$ large enough.
\end{enumerate}
\end{theorem}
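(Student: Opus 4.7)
I would prove the two sub-statements (1) and (2) independently, since they compose to give the main isomorphism $e_\alpha H(A_n)_{\blambda,m} \cong \tR^\alpha_{\blambda,m}$. The intermediate algebra $\cI_{N,A}$ is designed precisely so that each sub-isomorphism performs only one conceptual operation: the isomorphism with $e_\alpha H(A_n)_{\blambda,m}$ trades Coxeter generators $g_a$ for intertwining elements $\Phi_a$ while keeping the $X_k$ invertible, whereas the isomorphism with $\tR^\alpha_{\blambda,m}$ trades invertible $X_k$ for nilpotent $y_k$ and replaces the $\Phi_a$ by renormalised generators $\psi_a$. In each case I would exhibit the isomorphism by explicit formulas and write down its inverse directly.

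\textbf{Part (2): Hecke side.} Send $e(\ti) \mapsto e^H_{\ti}$ and $X_k \mapsto X_k e_\alpha$. For $\Phi_a$ use a block-dependent definition dictated by relation (\ref{rel1-Phi4}): on blocks $e(\ti)$ with $i_a \neq i_{a+1}$ I would set $\Phi_a e(\ti) := U'_a e(\ti)$ with the normalised intertwiner $U'_a = U_a(1-X^{-\alpha_a})^{-1}$ from Section \ref{subsec-int}, well-defined there because $1-X^{-\alpha_a}$ is invertible on that block and whose square is $\Gamma_a$; on blocks with $i_a = i_{a+1}$ I would instead set $\Phi_a e(\ti) := (g_a + q^{-1})e(\ti)$, so that the Hecke relation $(g_a - q)(g_a + q^{-1}) = 0$ reformulates exactly as $\Phi_a(\Phi_a - (q+q^{-1}))e(\ti) = 0$. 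The remaining defining relations of $\cI_{N,A}$ are then to be verified case by case from the intertwiner relations (\ref{rel-Ui}) and the finite Hecke relations (\ref{rel-H0}). The identity $U'_a = g_a - (q-q^{-1})(1-X^{-\alpha_a})^{-1}$ lets us solve for $g_a$, so the inverse map sends $g_a$ to $\Phi_a + (q-q^{-1})(1-X^{-\alpha_a})^{-1}$ on blocks with $i_a \neq i_{a+1}$ and to $\Phi_a - q^{-1}$ otherwise, and one checks that the two assignments are mutually inverse.

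\textbf{Part (1): KLR side.} Using (\ref{rel1-Xk}), each $1 - i_k^{-1}X_k$ is nilpotent on $e(\ti)$, so the formula $y_k e(\ti) := (i_k X_k^{-1} - i_k^{-1}X_k)e(\ti)$ defines a nilpotent element equal to $f(1-i_k^{-1}X_k)e(\ti)$ for the series $f(z) = z + z/(1-z)$. Because $f'(0) = 2$ is invertible in $K$, the formal composition inverse $f^{-1}$ exists with no constant term, and $X_k$ is recovered from $X_k e(\ti) = i_k(1 - f^{-1}(y_k))e(\ti)$. For the intertwiners I would search for a renormalisation $\psi_a e(\ti) = \Phi_a P_a(\ti) + Q_a(\ti)$ with $P_a(\ti), Q_a(\ti) \in K[[y_1,\dots,y_n]]\,e(\ti)$, chosen according to whether $i_a = i_{a+1}$ and, when they differ, according to the arrow pattern between $i_a$ and $i_{a+1}$ in $\Gamma_{\blambda}$, so that $\psi_a^2 e(\ti)$ reproduces the right-hand side of (\ref{Rel:V6}) and so that the commutation relation (\ref{Rel:V4}) with the $y_j$ holds. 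Invertibility of the substitution between $(\Phi_a, X_k)$ and $(\psi_a, y_k)$ then yields the inverse homomorphism.

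\textbf{Main obstacle.} The hardest step will be verifying the braid-like relation (\ref{Rel:V7}) for the renormalised $\psi_a$: the coefficients $P_a(\ti), Q_a(\ti)$ fixed by the quadratic relation must simultaneously produce the correct triple commutator, and this is precisely where the specific series $f(z) = z + z/(1-z)$ plays its role, since among the family of admissible $f \in zK[[z]]$ with $f'(0)$ invertible it is the unique choice (up to natural freedom described in the introduction) that will extend to the type $B$ setting of Section \ref{sec-proof}. A secondary, more technical difficulty is the verification of the mixed braid relation (\ref{rel1-Phi5}) on the Hecke side, which requires expanding products in $\hH(A_n)^{loc}$ and treating the patterns $i_b = i_{b+1} = i_{b+2}$, $i_b = i_{b+2} \neq i_{b+1}$, and the generic case separately. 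Once the two parts are in place they combine to give the theorem, and the dependence on $N$ vanishes because in $e_\alpha H(A_n)_{\blambda,m}$ the relation (\ref{rel1-Xk}) holds for $N$ large enough by Proposition \ref{prop-eig1}, while in $\tR^\alpha_{\blambda,m}$ it holds by Lemma \ref{lem-nily}.
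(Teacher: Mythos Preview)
Your proposal is correct and follows essentially the same two-step strategy as the paper: Part~(2) uses the normalised intertwiners $U'_a$ on blocks with $i_a\neq i_{a+1}$ and $g_a+q^{-1}$ on blocks with $i_a=i_{a+1}$, exactly as in the paper's Lemmas~\ref{lem1A}--\ref{lem2A}, and Part~(1) passes from $X_k$ to nilpotent $y_k$ via a formal series and renormalises $\Phi_a$ to obtain $\psi_a$.

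Two small points where the paper differs are worth noting. First, the paper carries out Part~(1) for an \emph{arbitrary} series $f\in zK[[z]]$ with $f'(0)\neq 0$, obtaining a whole family of isomorphisms (and hence a family of automorphisms of $\tR^\alpha_{\blambda,m}$, cf.\ Section~\ref{sec-autA}); the specific choice $f(z)=z+z/(1-z)$ is only singled out later, in Section~\ref{sec-proof}, for the type~$B$ extension. Second, the renormalisation the paper uses is purely multiplicative, $\psi_a e(\ti)=\Phi_a\,\tQ_a(\ti)\,e(\ti)$, with no additive term: the invertible power series $Q_a(\ti)\in K[[y_a,y_{a+1}]]$ are not determined directly by the quadratic relation but are instead required to satisfy three structural conditions (\ref{eq-Q1})--(\ref{eq-Q3}), and it is condition~(\ref{eq-Q3}) (a compatibility under $r_k,r_{k+1}$) that makes the braid verification (\ref{Rel:V7}) tractable. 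Your ansatz $\Phi_a P_a(\ti)+Q_a(\ti)$ is more general than needed, and isolating the right constraints on $P_a$ from the quadratic relation alone would likely be harder than the paper's route.
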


\begin{remark}\label{rem-BK}
This Theorem is proved in \cite{BK} if we assume that $\alpha$ is a single orbit of the symmetric group and that $\bl=(\lambda_1)$ with $\lambda_1=1$. In fact the proof in \cite{BK} can be repeated with very minor modifications in our more general setting (this is done in \cite{Ro2}). 

However, we will give a different isomorphism to that given in \cite{BK}. In fact, we provide a whole family of isomorphic maps, indexed by a formal power series $f$ in one variable with zero constant term and non-zero coefficient in degree 1. The one used in \cite{BK} is simply $f(z)=z$. It turns out that we will need a different one for generalising this theorem to the type B situation. This is why we need to provide the details of the proof which, in any case, will be useful for the proof of our main theorem. We remark that the approach of the proof is largely inspired by the proof in \cite{BK}.
\hfill$\triangle$
\end{remark}

The remainder of this section is devoted to the proof of the theorem.

\vskip .2cm
\noindent We start by fixing an integer $N$ large enough. In $e_{\alpha}H(A_n)_{\blambda,m}$, by definition of the idempotents $e^H(\ti)$, the elements $(X_k-i_k)e^H(\ti)$ are nilpotent for any $k=1,\dots,n$ and any $\ti\in\alpha$. We assume $N$ to be larger than all the nilpotency indices of these elements.

In $\tR^{\alpha}_{\bl,m}$, it is known that the elements $y_ke(\ti)$ are nilpotent for all $k=1,\dots,n$ and all $\ti\in\alpha$ (see Lemma \ref{lem-nily}, or directly \cite[Lemma 2.1]{BK}). We also assume $N$ to be larger than all the nilpotency indices of these elements.

\paragraph{Proof of item 1.} First, we fix a formal power series $f=\sum_i a_iz^i\in K[[z]]$ such that $a_0=0$ and $a_1\neq 0$. Thus there exists a formal power series denoted $g=\sum_i b_iz^i\in K[[z]]$ with $b_0=0$ and $b_1\neq 0$ satisfying
$$f\bigl(g(z)\bigr)=g\bigl(f(z)\bigr)=z\ .$$

We now label the generators of both $\tR^{\alpha}_{\blambda,m}$ and $\cI_{N,A}$ in a way in which, at first, will appear redundant but which will be needed for clarity later on. We set
\begin{equation*}
y_k^R:=y_k\,,\ \ \psi_s^R:=\psi_s\ \ \ \text{in $\tR^{\alpha}_{\blambda,m}$,}\ \ \ \qquad\text{and}\ \ \ \qquad X_k^{\cI}:=X_k\,,\ \ \Phi_s^{\cI}:=\Phi_s \ \ \text{in $\cI_{N,A}$.}
\end{equation*}
for all $k=1,\ldots, n$ and all $s=1,\ldots,n-1$.

Let $k\in\{1,\dots,n\}$. We define elements of $\cI_{N,A}$:
\[y^{\cI}_k:=\sum_{\textbf{i} \in \alpha}f\bigl(1-i_k^{-1}X_k\bigr)\idmep\in\cI_{N,A}\ .\]
These elements are well-defined since $(1-i_k^{-1}X_k)\idmep$ is nilpotent for any $\ti\in\alpha$ thanks to the defining relation (\ref{rel1-Xk}) in $\cI_{N,A}$. Moreover, the elements $y^{\cI}_k$ are nilpotent since the formal power series $f$ has no constant term and they commute.

Note that by definition of $f$ and $g$, we have then that $X_k=\sum_{\textbf{i} \in \alpha}i_k\bigl(1-g(y^{\cI}_k)\bigr)e(\ti)$ in $\cI_{N,A}$. Therefore, conversely, we define elements of $\tR^{\alpha}_{\blambda,m}$;
\[X_k^R:=\sum_{\textbf{i} \in \alpha}i_k\bigl(1-g(y_k)\bigr)\idmep\in\tR^{\alpha}_{\blambda,m}\ .\]
These elements are well-defined since the elements $y_k$ are nilpotent. Moreover, the elements $X_k^R$ are invertible since $g$ has no constant term and they commute.

Now we introduce a symbol $\some$ which we will use as an upper index to denote both $R$ and $\cI$. Let $r\in\{1,\dots,n\}$, $\ti\in\alpha$ and $P$ any rational expression in $X_1,\dots,X_n$. We set
\begin{equation}\label{nota1}
X^{\some}_k(\ti)=X^{\some}_k(i_k):=i_k\bigl(1-g(y^{\some}_k)\bigr)\ \ \ \ \text{and}\ \ \ \ P^{\some}(\ti)=P\bigl(X^{\some}_1(i_1),\dots,X^{\some}_n(i_n)\bigr)\ .
\end{equation}
Note that if $P^{\some}$ is a rational expression in $X^{\some}_1,\dots,X^{\some}_n$ such that $P^{\some}e(\ti)$ is defined (we recall again that this means that the denominator of $P^{\some}$ is invertible in the subalgebra $K[X^{\some}_1,\dots,X^{\some}_n]e(\ti)$) then, by construction, we have $P^{\some}e(\ti)=P^{\some}(\ti)e(\ti)$, where $P^{\some}(\ti)$ is a formal power series in $y^{\some}_1,\dots,y^{\some}_n$. For example, we have $X^{\some}_ke(\ti)=X^{\some}_k(\ti)\idmep$.

\vskip .2cm
Then we fix a family of invertible power series $Q_k(i,j) \in K[[z, z']]$, where $k=1,\dots,n-1$ and $(i,j)\in K^2$, and we set for $\some \in \{ R, \cI \}$:
$$Q^{\some}_k(i,j):=Q_k(i,j)\bigl(y^{\some}_k,y^{\some}_{k+1}\bigr) \in K[[y^{\some}_k,y^{\some}_{k+1}]]\ .$$
We will often use the following notation:
\[Q^{\some}_k(\ti)=Q^{\some}_k(i_k,i_{k+1})\ \ \ \quad\text{and}\quad\ \ \ \tQ^{\some}_k(\textbf{i})=\tQ^{\some}_k(i_k,i_{k+1})=\bigl(Q^{\some}_k(\textbf{i})\bigr)^{-1}\ .\]

We assume that the following properties are satisfied (where $\Gamma_k$ is defined in Definition \ref{defI} and $r_k$ acts on formal power series in $y^{\some}_1,\dots,y^{\some}_n$ by exchanging $y^{\some}_k$ and $y^{\some}_{k+1}$), for any $\ti\in\alpha$ and $k=1,\dots,n-1$,
\begin{equation}\label{eq-Q1}
Q_k^{\some}(\ti)=i_k^{-1}\big( q^{-1}X^{\some}_k(\ti)-qX^{\some}_{k+1}(\ti) \big)\displaystyle\frac{y^{\some}_{k+1}-y^{\some}_k}{g(y^{\some}_{k+1})-g(y^{\some}_k)}\ \ \ \ \ \textrm{ if } i_k=i_{k+1}\,,
\end{equation}
\begin{equation}\label{eq-Q2}
{^{r_k}}Q^{\some}_k\bigl(r_k(\ti)\bigr)Q^{\some}_k(\ti)=\left\lbrace
\begin{array}{ll}
\Gamma_k^{\some}(\ti) & \textrm{ if }i_k \nleftrightarrow i_{k+1} \vspace*{0.6em}\\
\Gamma_k^{\some}(\ti)\big( y_{k+1}^{\some}-y_k^{\some} \big)^{-1} & \textrm{ if }i_k \leftarrow i_{k+1} \vspace*{0.6em}\\
\Gamma_k^{\some}(\ti)\big( y_{k}^{\some}-y_{k+1}^{\some} \big)^{-1} & \textrm{ if }i_k \rightarrow i_{k+1} \vspace*{0.6em}\\
\Gamma_k^{\some}(\ti)\big( y_{k+1}^{\some}-y_k^{\some} \big)^{-1}\big( y_k^{\some}-y_{k+1}^{\some} \big)^{-1} & \textrm{ if }i_k \leftrightarrow i_{k+1} 
\end{array}
\right.
\end{equation}
and
\begin{equation}\label{eq-Q3}
{^{r_k}}Q^{\some}_{k+1}\bigl(r_{k+1}r_k(\textbf{i})\bigr)={^{r_{k+1}}}Q^{\some}_k\bigl(r_kr_{k+1}(\textbf{i})\bigr)\ .
\end{equation}

\begin{lemma}\label{lem-QA}
There exists such a family satisfying Conditions (\ref{eq-Q1})--(\ref{eq-Q3}).
\end{lemma}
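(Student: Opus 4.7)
The plan is to exploit the flexibility in choosing $Q_k(i,j)$ by making it depend only on the pair $(i,j)$, not on the position $k$. Under this assumption, condition~(\ref{eq-Q3}) becomes automatic: unwinding the definitions, both of its sides equal the power series $Q(i_{k+2},i_k)$ evaluated at $(y^{\some}_k,y^{\some}_{k+2})$. The task therefore reduces to specifying, for every pair $(i,j)\in S\times S$, an invertible element $Q(i,j)\in K[[z,z']]$ satisfying~(\ref{eq-Q1}) when $i=j$ and the product relation coming from~(\ref{eq-Q2}) when $i\neq j$.

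For the case $i=j$, I would take~(\ref{eq-Q1}) itself as the definition. Writing $g(y_{k+1})-g(y_k)=(y_{k+1}-y_k)H(y_k,y_{k+1})$ with $H(0,0)=b_1\neq 0$, the ratio appearing in~(\ref{eq-Q1}) is a well-defined invertible power series; combined with the non-zero constant term $q^{-1}-q$ of the prefactor $i^{-1}\bigl(q^{-1}X^{\some}_k-qX^{\some}_{k+1}\bigr)$, this makes $Q(i,i)$ invertible.

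For the case $i\neq j$, relation~(\ref{eq-Q2}) imposes a single constraint on the pair $(Q(i,j),Q(j,i))$ via their product; writing $R_{i,j}\in K[[z,z']]$ for the right-hand side in the corresponding case, the constraint reads $Q(j,i)(z',z)\cdot Q(i,j)(z,z')=R_{i,j}(z,z')$. I would fix an arbitrary total order on $S$ and, for $i<j$, set $Q(i,j):=1$, which then forces $Q(j,i)(z,z'):=R_{i,j}(z',z)$. The relation is satisfied by construction, so the only remaining point is to verify the invertibility of $R_{i,j}$. When $i\nleftrightarrow j$ this is immediate, since $\Gamma^{\some}_k(\ti)$ has non-zero constant term $1+(q-q^{-1})^2/\bigl((1-j/i)(1-i/j)\bigr)$.

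The main computation, and the one place requiring a concrete check, is the invertibility of $R_{i,j}$ in the arrow cases. Using $X^{\some}_k=i_k\bigl(1-g(y^{\some}_k)\bigr)$ and $g(y)=b_1 y+O(y^2)$, I would expand the numerator factors of $\Gamma^{\some}_k(\ti)$ that vanish at $y=0$. When $i\rightarrow j$ one obtains $(q-q^{-1}X^{\alpha_k})\idmep=qb_1(y_{k+1}-y_k)\idmep+O(y^2)$, so dividing $\Gamma^{\some}_k(\ti)$ by $(y_k-y_{k+1})$ yields a power series with non-zero constant term; the $\leftarrow$ case is symmetric. In the $\leftrightarrow$ case $q^2=-1$ forces both numerator factors to vanish linearly in $(y_{k+1}-y_k)$, producing a double zero exactly matched by the double denominator in~(\ref{eq-Q2}); a direct computation shows the resulting constant term of $R_{i,j}$ equals $-b_1^2/4\neq 0$. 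The assumption $b_1\neq 0$, i.e.\ that $f$ admits a composition inverse, is essential at every arrow case.
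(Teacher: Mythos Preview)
Your approach is correct in spirit and genuinely different from the paper's. The paper exhibits explicit case-by-case formulas for $Q_k^{\some}(\ti)$ (formula~(\ref{formulaQk})) and then verifies (\ref{eq-Q1})--(\ref{eq-Q3}) directly; you instead observe that making $Q_k$ independent of $k$ trivialises~(\ref{eq-Q3}), and then solve~(\ref{eq-Q2}) non-constructively by declaring $Q(i,j)=1$ on one side of a total order. This is more elementary and avoids the algebra of checking~(\ref{eq-Q2}) in five sub-cases. The trade-off is that the paper's explicit formulas are chosen so that they extend to the type~B setting (Lemma~\ref{lem-QB}), where the additional symmetry condition~(\ref{eq-Q6}) is needed; your total-order construction will not in general satisfy~(\ref{eq-Q6}), so it suffices for Lemma~\ref{lem-QA} but cannot be recycled later.

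There is one genuine gap. You assert that~(\ref{eq-Q2}) ``imposes a single constraint on the pair $(Q(i,j),Q(j,i))$'', but in fact it imposes \emph{two}: one for $\ti$ with $(i_k,i_{k+1})=(i,j)$ and one for $\ti$ with $(i_k,i_{k+1})=(j,i)$. Having defined $Q(j,i)(z,z'):=R_{i,j}(z',z)$ from the first, you must still check the second, namely that
\[
Q(i,j)(z',z)\cdot Q(j,i)(z,z')=R_{j,i}(z,z'),\quad\text{i.e.}\quad R_{i,j}(z',z)=R_{j,i}(z,z').
\]
This is true --- it follows from the invariance of $\Gamma_k$ under $X_k\leftrightarrow X_{k+1}$, together with the fact that the arrow relations reverse (so the extra factor $(y_{k+1}-y_k)^{-1}$ becomes $(y_k-y_{k+1})^{-1}$, matching the swap) --- but it is not automatic and should be stated. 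Without it, your definition of $Q(j,i)$ might fail to satisfy~(\ref{eq-Q2}) at $\ti$ with $i_k>i_{k+1}$. Once this symmetry of $R_{i,j}$ is recorded, your argument is complete.
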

\begin{proof} First, we note that $y^{\some}_{k+1}-y^{\some}_k$ divides $g(y^{\some}_{k+1})-g(y^{\some}_k)$ and moreover $\displaystyle\frac{g(y^{\some}_{k+1})-g(y^{\some}_k)}{y^{\some}_{k+1}-y^{\some}_k}$ is an invertible power series since $g$ has a non-zero coefficient in degree 1. Then it is easy to see that the formulas
\begin{equation}\label{formulaQk}
Q_k^{\some}(\textbf{i}):=\left\lbrace
\begin{array}{ll}
i_k^{-1}\big( q^{-1}X^{\some}_k(\ti)-qX^{\some}_{k+1}(\ti) \big)\displaystyle\frac{y^{\some}_{k+1}-y^{\some}_k}{g(y^{\some}_{k+1})-g(y^{\some}_k)} & \textrm{ when } i_k=i_{k+1} \vspace*{1em}\\
\displaystyle\frac{q X^{\some}_k(\ti)-q^{-1}X^{\some}_{k+1}(\ti)}{X^{\some}_k(\ti)-X^{\some}_{k+1}(\ti)} & \textrm{ when } i_{k+1}\nleftrightarrow i_k \vspace*{1em}\\
\displaystyle\frac{(q X^{\some}_{k+1}(\ti)-q^{-1}X^{\some}_{k}(\ti))\,q\,i_k}{(X^{\some}_k(\ti)-X^{\some}_{k+1}(\ti))(X^{\some}_{k+1}(\ti)-X^{\some}_{k}(\ti))}\cdot\frac{g(y^{\some}_{k+1})-g(y^{\some}_k)}{y^{\some}_{k+1}-y^{\some}_k} & \textrm{ when }i_k \rightarrow i_{k+1} \vspace*{1em}\\
1 & \textrm{ when }i_k \leftarrow i_{k+1} \vspace*{1em}\\
\displaystyle\frac{qi_k}{X^{\some}_k(\ti)-X^{\some}_{k+1}(\ti)}\cdot\frac{g(y^{\some}_{k+1})-g(y^{\some}_k)}{y^{\some}_{k+1}-y^{\some}_k} & \textrm{ when }i_k \leftrightarrow i_{k+1}
\end{array}\right.
\end{equation}
indeed define invertible power series (here we use that $q^2\neq 1$), where the dependence on $\ti$ of $Q_k^{\some}(\textbf{i})$ is only through $i_k$ and $i_{k+1}$. Moreover, one sees immediately that
$$\begin{array}{ll}
q X^{\some}_k(\ti)-q^{-1}X^{\some}_{k+1}(\ti)=q i_k \bigl(g(y_{k+1})-g(y_{k})\bigr) \quad & \text{if $i_{k+1}=q^2 i_k$,}\\[0.5em]
q X^{\some}_{k+1}(\ti)-q^{-1}X^{\some}_{k}(\ti)=q i_{k+1} \bigl(g(y_k)-g(y_{k+1})\bigr) \quad & \text{if $i_{k+1}=q^{-2} i_k$.}
\end{array}$$
So, using the defining formula in Definition \ref{defI} for $\Gamma_k$, it is now easy to see that the properties (\ref{eq-Q1}) and (\ref{eq-Q2}) are satisfied. The final property (\ref{eq-Q3}) follows immediately once one notices that:\\
$-$ in $r_{k+1}r_k(\textbf{i})$, we have $(i_{k+2},i_k)$ in positions $(k+1,k+2)$, and we have $(i_{k+2},i_k)$ as well in positions $(k,k+1)$ in $r_{k}r_{k+1}(\textbf{i})$;\\
$-$ the pair $(y_{k+1},y_{k+2})$ is sent by $r_k$ to $(y_{k},y_{k+2})$ which is the same as the image of $(y_k,y_{k+1})$ under the action of $r_{k+1}$.
\end{proof}

Now we set, for $k=1,\dots, n-1$,
\[\psi_k^{\cI}:=\sum_{\textbf{i}\in \alpha} \Phi_k^{\cI}\tQ_k^{\cI}(\textbf{i})\idmep\ \in\cI_{N,A} \ \ \ \ \ \text{and}\ \ \ \ \ \Phi_k^R:=\sum_{\textbf{i}\in \alpha} \psi_k^RQ^R_k(\textbf{i})\idmep\ \in\tR^{\alpha}_{\blambda,m}\ . \]
Finally we are ready to give the isomorphism. We consider the following maps, where $i\in\{1,\dots,n\}$, $k\in\{1,\dots,n-1\}$ and $\ti\in\alpha$.
\begin{equation}\label{def-F-GA}
F\ :\ \begin{array}{rcl}
\tR^{\alpha}_{\blambda,m} & \longrightarrow & \cI_{N,A} \\[0.5em]
\idmep &\mapsto & \idmep \\[0.2em]
y_i &\mapsto & y_i^{\cI} \\[0.2em]
\psi_k &\mapsto & \psi_k^{\cI}
\end{array}
\ \ \ \ \ \text{and}\ \ \ \ \ 
G\ :\ \begin{array}{rcl}
\cI_{N,A} &\longrightarrow & \tR^{\alpha}_{\blambda,m} \\[0.5em]
\idmep &\mapsto & \idmep \\[0.2em]
X_i &\mapsto & X_i^R\\[0.2em]
\Phi_k &\mapsto & \Phi_k^R
\end{array}\ .
\end{equation}
Assume that $F$ and $G$ extend to algebra homomorphisms. Then we have $F\circ G=\text{Id}_{\cI_{N,A}}$ and $G\circ F=\text{Id}_{\tR^{\alpha}_{\blambda,m}}$, since it is satisfied on the generators. This is immediate from the definitions for the generators $e(\ti)$, $\psi_k$ and $\Phi_k$. For generators $X_k,y_k$, we have, for any $\ti$, 
\[F\circ G(X_k)e(\ti)=F\Bigl(i_k\bigl(1-g(y_k)\bigr)\Bigr)e(\ti)=i_k\bigl(1-g\bigl(f(1-i_k^{-1}X_k)\bigr)\bigr)e(\ti)=X_ke(\ti)\ ,\]
since $f$ and $g$ are inverses for the composition. Similarly, we have $G\circ F(y_k)=y_k$. As a conclusion, the proof will be finished when we show that $F$ and $G$ extend to algebra homomorphisms.

\paragraph{Proof that $F$ and $G$ extend to algebra homomorphisms.}
We split the verification into several steps. Below we will often indicate only which relations we are checking without specifying for which elements we are doing so. Therefore, for convenience of the reader, let us recall that:
\begin{itemize}
\item[$-$] we must show Relations (\ref{Rel:V1})--(\ref{Rel:V7}) (without considering the ones with $\psi_0$), and the cyclotomic relations (\ref{cycV}) for the elements $\idmep,y_k^{\cI},\psi_k^{\cI}$ of $\cI_{N,A}$. 
\item[$-$] And, likewise, we must check Relations (\ref{rel1-e})--(\ref{rel1-Phi5}) (ignoring those involving $\Phi_0$) for the elements $\idmep, X_k^R, \Phi_k^R$ of $\tR^{\alpha}_{\blambda,m}$.
\end{itemize}

\begin{remark}\label{rem-symbol}
Moreover, by use of the symbol $\some$, which denotes either $\cI$ of $R$, we often calculate simultaneously in $\cI_{N,A}$ and in $\tR^{\alpha}_{\blambda,m}$. Our method of proof will often be the following. We will obtain an equality (or more generally an equivalence of some relations), for elements with the symbol $\some$. Roughly speaking, on one side of the equality, there will be a defining relation for $\cI_{N,A}$ and, on the other side, one of $\tR^{\alpha}_{\blambda,m}$. Then, by taking $\some=\cI$, we find that the defining relation in $\cI_{N,A}$ implies a relation in $\tR^{\alpha}_{\blambda,m}$ and conversely, by taking $\some=R$, we find that the defining relation in $\tR^{\alpha}_{\blambda,m}$ implies a relation in $\cI_{N,A}$.

Finally, note that, once we have checked that a defining relation of one of the two algebras $\cI_{N,A}$ or $\tR^{\alpha}_{\blambda,m}$ is satisfied, then we can use this defining relation with a symbol $\some$ on the elements.\hfill$\triangle$
\end{remark}

$\bullet$ \textbf{Relations (\ref{rel1-e})-(\ref{rel1-X}) and (\ref{Rel:V1})-(\ref{Rel:V2}).} In $\tR^{\alpha}_{\blambda,m}$, Relations (\ref{rel1-e})-(\ref{rel1-X}) are immediately seen to be satisfied by $\idmep,X_k^R$, and similarly for Relations (\ref{Rel:V1})-(\ref{Rel:V2}) for elements $\idmep,y_k^{\cI}$ in $\cI_{N,A}$.

\vskip .1cm
$\bullet$ \textbf{Relations (\ref{rel1-X1})-(\ref{rel1-Xk}) and (\ref{cycV}).} Let $k\in\{1,\dots,n\}$ and $\ti\in\alpha$. In $\tR^{\alpha}_{\blambda,m}$, we have 
$$(X^R_k-i_k)^{N}\idmep=(-i_kg(y_k))^Ne(\ti)=0\,,$$
because $g$ has no constant term and because $y_k^Ne(\ti)=0$ by assumption on $N$. Moreover, if $k=1$, this is true for $N=m(i_1)$ since $y_1^{m(i_1)}e(\ti)=0$ is the defining relation (\ref{cycV}) of $\tR^{\alpha}_{\blambda,m}$.

Conversely, in $\cI_{N,A}$, we have 
$$\bigl(y_1^{\cI}\bigr)^{m(i_1)}e(\ti)=\bigl(f(1-i_1^{-1}X_1)\bigr)^{m(i_1)}e(\ti)=0\ ,$$
since $f$ has no constant term and $(X_1-i_1)^{m(i_1)}e(\ti)=0$ in $\cI_{N,A}$.

\vskip .1cm
$\bullet$ \textbf{Relations (\ref{rel1-Phi1}) and (\ref{Rel:V3}).} Writing $\psi_a^{\some}$ in terms of $\Phi^{\some}_a$, we have
\[\Bigl(\psi^{\some}_ae(\ti)-e(r_a(\ti))\psi^{\some}_a\Bigr)\,e(\ti)=\Bigl(\Phi^{\some}_a\tQ^{\some}_a(\ti)e(\ti)-e(r_a(\ti))\Phi^{\some}_a\tQ^{\some}_a(\ti)\Bigr)\,e(\ti)\ .\]
By the previous steps, we know that, for $\some\in\{R,\cI\}$, the elements $Q^{\some}_k(\ti)$ commute with all the idempotents $e(\textbf{j})$, $\textbf{j}\in\alpha$. Using the invertibility of $\tQ^{\some}_a(\ti)$, this shows simultaneously (\ref{rel1-Phi1}) and (\ref{Rel:V3}) in front of the idempotent $e(\ti)$.

Then let $\textbf{j}\neq\ti$, and use that $e(\textbf{j})e(\ti)=0$ to note that
\begin{multline*}
\Bigl(\psi^{\some}_ae(\ti)-e(r_a(\ti))\psi^{\some}_a\Bigr)\,e(\textbf{j})=0\ \ \ \Leftrightarrow\ \ \ e(r_a(\ti))\psi^{\some}_ae(\textbf{j})=0\ \ \ \Leftrightarrow\ \ \ e(r_a(\ti))\Phi^{\some}_a\tQ^{\some}_a(\textbf{j})e(\textbf{j})=0\\
\ \ \ \Leftrightarrow\ \ \ e(r_a(\ti))\Phi^{\some}_ae(\textbf{j})=0\ \ \ \Leftrightarrow\ \ \ \Bigl(\Phi^{\some}_ae(\ti)-e(r_a(\ti))\Phi^{\some}_a\Bigr)\,e(\textbf{j})=0\ .
\end{multline*}
This shows (\ref{rel1-Phi1}) and (\ref{Rel:V3}) in front of the idempotent $e(\textbf{j})$ and therefore (\ref{rel1-Phi1}) and (\ref{Rel:V3}) are satisfied.

\vskip .1cm
$\bullet$ \textbf{Relations (\ref{rel1-Phi2}) and (\ref{Rel:V4}).} Here for clarity, we will treat each relation separately. We start with (\ref{Rel:V4}) in $\cI_{N,A}$. Recall that $s_b$ is the transposition $(b,b+1)$, and note that 
$$y^{\cI}_{s_b(j)} e(r_b(\ti))=f(1-i_j^{-1}X_{s_b(j)})e(r_b(\ti))={}^{r_b}f(1-i_j^{-1}X_{j})e(r_b(\ti))\ .$$
Therefore, we have in $\cI_{N,A}$
\begin{multline*}
\bigl(\psi^{\cI}_by^{\cI}_j-y^{\cI}_{s_b(j)}\psi_b^{\cI}\bigr)e(\ti)=\bigl(\Phi_bf(1-i_j^{-1}X_{j})-f(1-i_j^{-1}X_{s_b(j)})\Phi_b\bigr)\tQ^{\cI}_b(\ti)e(\ti)\\
=\delta_{i_b,i_{b+1}}(q X_{b+1}-q^{-1}X_b)\frac{f(1-i_j^{-1}X_{j})-f(1-i_j^{-1}X_{s_b(j)})}{X_{b+1}-X_b}\tQ^{\cI}_b(\ti)e(\ti)\ .
\end{multline*}
This is equal to $0$ if $s_b(j)=j$ or if $i_b\neq i_{b+1}$. Now assume that $i_b=i_{b+1}$. Then, from Condition (\ref{eq-Q1}) on $\tQ^{\cI}_b(\ti)$ and from the fact that $X_ke(\ti)=i_k\bigl(1-g(y^{\cI}_k)\bigr)e(\ti)$, we have:
\begin{equation}\label{eq2}
\tQ^{\cI}_b(\ti)\frac{q X_{b+1}-q^{-1}X_b}{X_{b+1}-X_b}e(\ti)=(y^{\cI}_{b+1}-y^{\cI}_b)^{-1}e(\ti)\ .
\end{equation}

Moreover, if $j\in\{b,b+1\}$, we find finally
\[\bigl(\psi^{\cI}_by_j-y_{s_b(j)}\psi_b^{\cI}\bigr)e(\ti)=\frac{y_j-y_{s_b(j)}}{y_{b+1}-y_b}e(\ti)\ .\]

Now we deal with (\ref{rel1-Phi2}) in $\tR^{\alpha}_{\blambda,m}$. Let $\Pi$ be a Laurent polynomial in $X^R_1,\dots, X_n^R$, and let $P$ be the formal power series in $y_1,\dots,y_n$ defined by $P:=\Pi\bigl((i_1\bigl(1-g(y_1)\bigr),\dots,i_n\bigl(1-g(y_n)\bigr)\bigr)$. Note that $\Pi e(\ti)=Pe(\ti)$ and 
\begin{multline*}
{}^{r_b}\Pi e(r_b(\ti))=\Pi(X^R_1,\dots,X^R_{b+1},X^R_b,\dots,X^R_n)e(r_b(\ti))\\
=\Pi\Bigl((i_1\bigl(1-g(y_1)\bigr),\dots,i_{b}\bigl(1-g(y_{b+1})\bigr),i_{b+1}\bigl(1-g(y_b)\bigr),\dots,i_n\bigl(1-g(y_n)\bigr)\Bigr)e(r_b(\ti))={}^{r_b}Pe(r_b(\ti))\ ,
\end{multline*}
where ${}^{r_b}\Pi$ stands for the transposition of $X^R_b$ and $X^R_{b+1}$ in the Laurent polynomial $\Pi$, while ${}^{r_b}P$ stands for the transposition of $y_b$ and $y_{b+1}$ in the power series $P$. Now, using Relation (\ref{rel:V4'}) in $\tR^{\alpha}_{\blambda,m}$, we find
\begin{multline*}
\bigl(\Phi^{R}_b\Pi-{}^{r_b}\Pi\Phi_b^{R}\bigr)e(\ti)=\bigl(\psi_bP-{}^{r_b}P\psi_b\bigr)Q^{R}_b(\ti)e(\ti)=\delta_{i_b,i_{b+1}}\frac{P-{}^{r_b}P}{y_{b+1}-y_b}Q^{R}_b(\ti)e(\ti)\\
=\delta_{i_b,i_{b+1}}(q X_{b+1}-q^{-1}X_b)\frac{\Pi-{}^{r_b}\Pi}{X_{b+1}-X_b}e(\ti)\ ,
\end{multline*}
where we note that $e(\ti)=e(r_b(\ti))$ if $i_b=i_{b+1}$, and we have again used Relation (\ref{eq2}) with upper indices $R$ instead of $\cI$.

At this point of the proof we can start using the following relation, for any $\some\in\{R,\cI\}$ and any formal power series $P$ in $y^{\some}_1,\dots,y^{\some}_n$:
\begin{equation}\label{eqproof}
\bigl(\Phi^{\some}_bP-{}^{r_b}P\Phi_b^{\some}\bigr)e(\ti)=\delta_{i_b,i_{b+1}}\frac{P-{}^{r_b}P}{y^{\some}_{b+1}-y^{\some}_b}Q^{\some}_b(\ti)e(\ti)\ .
\end{equation}

\vskip .1cm
$\bullet$ \textbf{Relations (\ref{rel1-Phi3}) and (\ref{Rel:V5}).} Let $a,b\in\{1,\dots,n-1\}$ such that $|a-b|>1$. Note that $Q^{\some}_a(\ti)\in K[[y^{\some}_a,y^{\some}_{a+1}]]$ so it commutes with $\Phi^{\some}_b$, and moreover the dependence on $\ti$ is only through $i_a$ and $i_{a+1}$ so we have $Q^{\some}_a(r_b(\ti))=Q^{\some}_a(\ti)$. Similar statements hold with $a$ and $b$ exchanged. Thus we have
\[(\psi^{\some}_a\psi^{\some}_b-\psi^{\some}_b\psi^{\some}_a)e(\ti)=\bigl(\Phi^{\some}_a\tQ^{\some}_a(r_b(\ti))\Phi^{\some}_b\tQ^{\some}_b(\ti)-\Phi^{\some}_b\tQ^{\some}_b(r_a(\ti))\Phi^{\some}_a\tQ^{\some}_a(\ti)\bigr)e(\ti)=(\Phi^{\some}_a\Phi^{\some}_b-\Phi^{\some}_b\Phi^{\some}_a)\tQ^{\some}_b(\ti)\tQ^{\some}_a(\ti)e(\ti)\ ,\]
and we note that $\tQ^{\some}_b(\ti)\tQ^{\some}_a(\ti)$ is invertible to conclude the verification.

\vskip .1cm
$\bullet$ \textbf{Relations (\ref{rel1-Phi4}) and (\ref{Rel:V6}).} Assume first that $i_b\neq i_{b+1}$. Using (\ref{eqproof}), we have
\[(\psi^{\some}_b)^2e(\ti)=\Phi^{\some}_b\tQ^{\some}_b(r_b(\ti))\,\Phi^{\some}_b\tQ^{\some}_b(\ti)\,e(\ti)=(\Phi^{\some}_b)^2.{}^{r_b}\tQ^{\some}_b(r_b(\ti))\tQ^{\some}_b(\ti)\,e(\ti)\ .\]
Therefore, we have:
\[\Bigl((\psi^{\some}_b)^2-\Gamma^{\some}_b(\ti){^{r_b}}\tQ^{\some}_b(r_b(\ti))\tQ^{\some}_b(\ti)\Bigr)e(\ti)
=0\ \ \ \ \Leftrightarrow\ \ \ \ \Bigl((\Phi^{\some}_b)^2-\Gamma^{\some}_b(\ti)\Bigr){^{r_b}}\tQ^{\some}_b(r_b(\ti))\tQ^{\some}_b(\ti)e(\ti)=0 .\]
We note that, by definition, $\Gamma^{\some}_b(\ti)e(\ti)=\Gamma^{\some}_be(\ti)$, so on the right we have (\ref{rel1-Phi4}) multiplied by ${^{r_b}}\tQ^{\some}_b(r_b(\ti))\tQ^{\some}_b(\ti)$ which is invertible. And from Condition (\ref{eq-Q2}) on the family $Q^{\some}_b(\ti)$, we have precisely (\ref{Rel:V6}) on the left (in all cases where $i_b\neq i_{b+1}$).

Now assume that $i_b= i_{b+1}$. From (\ref{eq-Q1}), a direct calculation shows that
\[\bigl({}^{r_b}Q^{\some}_b(\ti)-Q^{\some}_b(\ti)\bigr)e(\ti)=i_b^{-1}(q+q^{-1})(X^{\some}_{b+1}(\ti)-X^{\some}_b(\ti))\frac{y^{\some}_{b+1}-y^{\some}_{b}}{g(y^{\some}_{b+1})-g(y^{\some}_b)}e(\ti)=(q+q^{-1})(y^{\some}_{b+1}-y^{\some}_b)e(\ti)\ .\]
Therefore we have, using the commutation relation (\ref{eqproof}),
\begin{multline*}
(\psi^{\some}_b)^2e(\ti)=\Phi^{\some}_b\,\tQ^{\some}_b(\ti)\,\Phi^{\some}_b\,\tQ^{\some}_b(\ti)\,e(\ti)=\Phi^{\some}_b\Bigl(\Phi^{\some}_b{}^{r_b}\tQ^{\some}_b(\ti)-Q^{\some}_b(\ti)\frac{\tQ^{\some}_b(\ti)-{}^{r_b}\tQ^{\some}_b(\ti)}{y^{\some}_{b+1}-y^{\some}_b}\Bigr)\tQ^{\some}_b(\ti)\,e(\ti)\\
=\Phi^{\some}_b\Bigl(\Phi^{\some}_b-\frac{{}^{r_b}Q^{\some}_b(\ti)-Q^{\some}_b(\ti)}{y^{\some}_{b+1}-y^{\some}_b}\Bigr){}^{r_b}\tQ^{\some}_b(\ti)\tQ^{\some}_b(\ti)\,e(\ti)=\Phi^{\some}_b\Bigl(\Phi^{\some}_b-(q+q^{-1})\Bigr){}^{r_b}\tQ^{\some}_b(\ti)\tQ^{\some}_b(\ti)\,e(\ti)\ ,
\end{multline*}
and we note again, to conclude the proof of both (\ref{rel1-Phi4}) and (\ref{Rel:V6}), that ${}^{r_b}\tQ^{\some}_b(\ti)\tQ^{\some}_b(\ti)$ is invertible.

\vskip .1cm
$\bullet$ \textbf{Relations (\ref{rel1-Phi5}) and (\ref{Rel:V7}).} To treat the final braid relations we will write $\psi^{\some}_b$ in terms of $\Phi_b^{\some}$, $\psi^{\some}_{b+1}$ in terms of $\Phi_{b+1}^{\some}$, and use all the relations already proved up until this point, especially the commutation relation (\ref{eqproof}). To simplify notation, we note again that $Q^{\some}_b(\ti)$ depends on $\ti$ only through $i_b$ and $i_{b+1}$, so we will write $Q^{\some}_b(i_b,i_{b+1})$ instead (and similarly for $Q^{\some}_{b+1}(\ti)$). Besides, it is enough to take $b=1$ and we denote 
$$(i_1,i_2,i_3)=:(i,j,k)\ .$$
We will almost always omit the idempotent $e(\ti)$ on the right hand side of each line.

First we note that, by repeatedly using the commutation relation (\ref{eqproof}), we will obtain
\begin{equation}\label{dev1A}
\psi^{\some}_1\psi^{\some}_2\psi^{\some}_1=\Phi^{\some}_1\tQ^{\some}_1(jk)\ \Phi^{\some}_2\tQ^{\some}_2(ik)\ \Phi^{\some}_1\tQ^{\some}_1(ij)=\Phi^{\some}_1\Phi^{\some}_2\Phi^{\some}_1\ {}^{r_1r_2}\tQ^{\some}_1(jk){}^{r_1}\tQ^{\some}_2(ik)\tQ^{\some}_1(ij)\ +\ \dots
\end{equation}
\begin{equation}\label{dev2A}
\psi^{\some}_2\psi^{\some}_1\psi^{\some}_2=\Phi^{\some}_2\tQ^{\some}_2(ij)\ \Phi^{\some}_1\tQ^{\some}_1(ik)\ \Phi^{\some}_2\tQ^{\some}_2(jk)=\Phi^{\some}_2\Phi^{\some}_1\Phi^{\some}_2\ {}^{r_2r_1}\tQ^{\some}_2(ij){}^{r_2}\tQ^{\some}_1(ik)\tQ^{\some}_2(jk)\ +\ \dots
\end{equation}
where in each case, the dots indicate terms with at most two occurrences of $\Phi^{\some}_1,\Phi^{\some}_2$. The crucial fact here is that
\[{}^{r_1r_2}\tQ^{\some}_1(jk){}^{r_1}\tQ^{\some}_2(ik)\tQ^{\some}_1(ij)={}^{r_2r_1}\tQ^{\some}_2(ij){}^{r_2}\tQ^{\some}_1(ik)\tQ^{\some}_2(jk)=:\bold{Q}^{\some}\ .\]
This equality is obtained by using Condition (\ref{eq-Q3}) three times on the family $Q^{\some}_b(\ti)$. So now we set
\[B^{\some}e(\ti):=\Bigl(\psi^{\some}_1\psi^{\some}_2\psi^{\some}_1-\psi^{\some}_2\psi^{\some}_1\psi^{\some}_2-(\Phi^{\some}_1\Phi^{\some}_2\Phi^{\some}_1-\Phi^{\some}_2\Phi^{\some}_1\Phi^{\some}_2)\bold{Q}^{\some}\Bigr)e(\ti)\ ,\]
and our goal is to calculate $B^{\some}e(\ti)$ and to check that, for any $\ti$, this is equal to the right hand side of (\ref{Rel:V7}) minus the right hand side of (\ref{rel1-Phi5}) multiplied by $\bold{Q}^{\some}$ on the right. Since $\bold{Q}^{\some}$ is invertible, this will prove both (\ref{rel1-Phi5}) and (\ref{Rel:V7}) (by taking first $\some=R$ and then $\some=\cI$).

There are several cases to consider ($i\neq k$, $i=k\neq j$ and $i=j=k$), We will repeatedly use the commutation relation (\ref{eqproof}). The right hand side in this relation, which appear in the calculation depending on conditions on $\ti$, produce the ``correcting'' terms indicated by dots in (\ref{dev1A})-(\ref{dev2A}).
\\
\\
\textbf{Notation.} For convenience, we drop the symbol $\some$ until the end of this verification.
\\
\\
\textbf{Case 1:} $i\neq k$. In (\ref{dev1A}), correcting terms can appear only when applying the commutation relation to the last $\Phi_1$, while in (\ref{dev2A}), correcting terms can appear only when applying the commutation relation to the last $\Phi_2$. We reason as follows:
\[\psi_1\psi_2\psi_1-\psi_2\psi_1\psi_2=\Phi_1\Phi_2{}^{r_2}\tQ_1(jk)\tQ_2(ik)\,\Phi_1\tQ_1(ij)\ -\ \Phi_2\Phi_1{}^{r_1}\tQ_2(ij)\tQ_1(ik)\,\Phi_2\tQ_2(jk)\ .\]
For the first term, if $i\neq j$ then there is no correcting term while moving through the last $\Phi_1$. If $i=j$ then we have, due to Condition (\ref{eq-Q3}) on the $Q$'s, that ${}^{r_2}\tQ_1(jk)={}^{r_1}\tQ_2(ik)$ and therefore the factor ${}^{r_2}\tQ_1(jk)\tQ_2(ik)$ commutes with $\Phi_1$. So finally we never have a correcting term and the first summand above is $\Phi_1\Phi_2\Phi_1\textbf{Q}$.

A similar reasoning shows that the second summand is always equal to $\Phi_2\Phi_1\Phi_2\textbf{Q}$ and we conclude that in this case we have $B^\some\idmep=0$ as required.
\\
\\
\textbf{Case 2:} $i=k\neq j$. Here, in (\ref{dev1A}), correcting terms can appear only when applying the commutation relation to the middle $\Phi_2$ while, in (\ref{dev2A}), correcting terms can appear only when applying the commutation relation to the middle $\Phi_1$. We find
\begin{multline*}
B^\some\idmep = \Phi_1\frac{\tQ_1(ji)-{}^{r_2}\tQ_1(ji)}{y_3-y_2}\Phi_1\tQ_1(ij)\ -\  \Phi_2\frac{\tQ_2(ij)-{}^{r_1}\tQ_2(ij)}{y_2-y_1}\Phi_2\tQ_2(ji)\\
=\psi_1^2\frac{1}{y_3-y_1}-\Phi_1^2\frac{{}^{r_1r_2}\tQ_1(ji)\tQ_1(ij)}{y_3-y_1}\ -\ \psi_2^2\frac{1}{y_3-y_1}+\Phi_2^2\frac{{}^{r_2r_1}\tQ_2(ij)\tQ_2(ji)}{y_3-y_1}
\\
= (\psi_1^2-\psi_2^2)\frac{1}{y_3-y_1}\ -\ (\Gamma_1-\Gamma_2)\frac{{^{r_1}}Q_2(ii)}{y_3-y_1}\textbf{Q}\ .
\end{multline*}
We note, using Condition (\ref{eq-Q1}) on $Q_2(ii)$, that $\displaystyle\frac{{^{r_1}}Q_2(jj)}{y_3-y_1}=\frac{qX_3-q^{-1}X_1}{X_3-X_1}$ and therefore the coefficient in front of $\textbf{Q}$ is what is required. The verification is concluded with the following relations which are easy to check:
\[\big( \psi_1^2-\psi_2^2 \big)\frac{1}{y_3-y_1}e(\ti)=\left\{
\begin{array}{l l l l}
0 & \quad \text{if $i=k\nleftrightarrow j$}\,\\[0.2em]
\idmep & \quad \text{if $i=k\rightarrow j$}\,\\[0.2em]
-\idmep & \quad \text{if $i=k\leftarrow j$}\, \\[0.2em]
(y_{3}-2y_{2}+y_1)\idmep & \quad \text{if $i=k\leftrightarrow j$}\,.
\end{array} \right. \] 
\\
\\
\textbf{Case 3:} In this case we have $i=j=k$. This is the most difficult case computationally since correcting terms may appear for every commutation relation we use. We sketch the main steps of the calculation. To save space we set, for $a\in\{1,2\}$, $Q_a:=Q_a(ii)$, $\tQ_a=Q_a^{-1}$. We recall that in this case $Q_1$ and $Q_2$ have explicit expressions given in (\ref{eq-Q1}) which are equal to:
\[Q_1=\frac{q^{-1}X_1-qX_{2}}{X_1-X_2}(y_2-y_1)\ \ \ \ \ \text{and}\ \ \ \ \ Q_2=\frac{q^{-1}X_2-qX_{3}}{X_2-X_3}(y_3-y_2)\,.\]
We recall that we have in this case
\begin{equation}\label{C1A-a}
\psi_1^2=\Phi_1\tQ_1\Phi_1\tQ_1=0\ \ \ \ \ \text{and}\ \ \ \ \ \psi_2^2=\Phi_2\tQ_2\Phi_2\tQ_2=0\ .
\end{equation}
We will also use the following particular instance of the commutation relation (\ref{eqproof}),
\begin{equation}\label{C1A-b}
\frac{1}{y_3-y_2}\Phi_1-\Phi_1\frac{1}{y_3-y_1}=\frac{1}{(y_3-y_2)(y_3-y_1)}Q_1\ .
\end{equation}
Then we calculate as follows:
\[\psi_1\psi_2\psi_1=\Phi_1\tQ_1\Phi_2\tQ_2\Phi_1\tQ_1=\ (A:=)\ \Phi_1\Phi_2{}^{r_2}\tQ_1\tQ_2\Phi_1\tQ_1\ +\ (B:=)\ \Phi_1\frac{\tQ_1-{}^{r_2}\tQ_1}{y_3-y_2}\Phi_1\tQ_1\ .\]
Then, we use condition (\ref{eq-Q3}) which says that ${}^{r_2}\tQ_1={}^{r_1}\tQ_2$ and we find that  ${}^{r_2}\tQ_1\tQ_2$ commutes with $\Phi_1$. Therefore we have that $A=\Phi_1\Phi_2\Phi_1\textbf{Q}$.

To treat the term $B$, we start by applying (\ref{C1A-b}) to find:
\[B=\Phi_1(\tQ_1-{}^{r_2}\tQ_1)\Phi_1\frac{\tQ_1}{y_3-y_1}\,+\,\Phi_1\frac{\tQ_1-{}^{r_2}\tQ_1}{(y_3-y_2)(y_3-y_1)}\ .\]
On the first term, we apply (\ref{C1A-a}) and the fact that ${}^{r_2}\tQ_1={}^{r_1}\tQ_2$ to obtain:
\[B=-\Phi_1^2\frac{\tQ_2\tQ_1}{y_3-y_1}\,+\,\Phi_1\frac{\tQ_2-{}^{r_1}\tQ_2}{(y_2-y_1)(y_3-y_1)}\,+\,\Phi_1\frac{\tQ_1-{}^{r_2}\tQ_1}{(y_3-y_2)(y_3-y_1)}\ .\]
We recall that $\Phi_1^2=(q+q^{-1})\Phi_1$ and again that ${}^{r_2}\tQ_1={}^{r_1}\tQ_2$, and we find:
\[B=-(q+q^{-1})\Phi_1\textbf{Q}\frac{{}^{r_2}\tQ_1}{y_3-y_1}\,+\,\Phi_1\Bigl(\frac{\tQ_1}{(y_3-y_2)(y_3-y_1)}+\frac{\tQ_2}{(y_2-y_1)(y_3-y_1)}-\frac{{}^{r_2}\tQ_1}{(y_2-y_1)(y_3-y_2)}\Bigr)\ .\]
The big parenthesis is equal to
\[\textbf{Q}\Bigl(\frac{(q^{-1}X_2-q X_3)(q^{-1}X_1-q X_3)}{(X_2-X_3)(X_1-X_3)}+\frac{(q^{-1}X_1-q X_2)(q^{-1}X_1-q X_3)}{(X_1-X_2)(X_1-X_3)}-\frac{(q^{-1}X_1-q X_2)(q^{-1}X_2-q X_3)}{(X_1-X_2)(X_2-X_3)}\Bigr)\ .\]
Now putting everything above the denominator $(X_1-X_2)(X_2-X_3)(X_1-X_3)$, one sees immediately that $X_2=X_1$ and $X_2=X_3$ cancels the numerator. Then looking at the constant term with respect to $X_2$, one finds at once that the numerator is $(X_1-X_2)(X_2-X_3)(q^2X_1-q^{-2}X_3)$.
Thus, a short final calculation gives:
\[B=-(q+q^{-1})\Phi_1\textbf{Q}\frac{q X_1-q^{-1}X_3}{X_1-X_3}+\Phi_1\textbf{Q}\frac{q^2X_1-q^{-2}X_3}{X_1-X_3}=-\Phi_1\textbf{Q}\ .\]
Finally, summing $A$ and $B$, we conclude that:
\[\psi_1\psi_2\psi_1=\bigl(\Phi_1\Phi_2\Phi_1-\Phi_1\bigr)\textbf{Q}\ .\]
Then, a very similar calculation shows that
\[\psi_2\psi_1\psi_2=\bigl(\Phi_2\Phi_1\Phi_2-\Phi_2\bigr)\textbf{Q}\ .\]
which allows to conclude that the following required relation is satisfied:
\[B^\some e(\ti)=-(\Phi_1-\Phi_2)\textbf{Q}\ .\]

\paragraph{Proof of item 2.} Recall the notation $Y^{-1}e(\ti)$ introduced in Section \ref{sec-inter} before Definition \ref{defI}. We use the similar notation $Y^{-1}e^H(\ti)$ in $e_{\alpha}H(A_n)_{\blambda,m}$. Note for example that $(1-X^{-\alpha_a})^{-1}e^H(\ti)$ are defined if $r_a(\ti)\neq \ti$.

Let $b\in\{1,\dots,n-1\}$. We define elements of $e_{\alpha}H(A_n)_{\blambda,m}$:
\[\Phi^H_b=g_be_{\alpha}-\sum_{\begin{array}{c}\scriptstyle{\ti\in\alpha}\\[-0.2em] \scriptstyle{i_b\neq i_{b+1}}\end{array}}(q-q^{-1})\bigl(1-X_bX_{b+1}^{-1}\bigr)^{-1}e^H(\textbf{i})+\sum_{\begin{array}{c}\scriptstyle{\textbf{i}\in\alpha}\\[-0.2em] \scriptstyle{i_b=i_{b+1}}\end{array}}q^{-1}e^H(\textbf{i})\ ,\]
and elements of $\cI_{N,A}$:
\[g^{\cI}_b:=\Phi_b+\!\!\!\sum_{\begin{array}{c}\scriptstyle{\ti\in\alpha}\\[-0.2em] \scriptstyle{i_b\neq i_{b+1}}\end{array}}(q-q^{-1})\bigl(1-X_bX_{b+1}^{-1}\bigr)^{-1}e(\textbf{i})-\!\!\!\sum_{\begin{array}{c}\scriptstyle{\textbf{i}\in\alpha}\\[-0.2em] \scriptstyle{i_b=i_{b+1}}\end{array}}q^{-1}e(\textbf{i})\ .\]

We consider the following maps, where $i\in\{1,\dots,n\}$, $b\in\{1,\dots,n-1\}$ and $\ti\in\alpha$.
\begin{equation}\label{def-rho-sigmaA}
\rho\ :\ \ \begin{array}{rcl}
\cI_{N,A} & \to & e_{\alpha}H(A_n)_{\blambda,m}\\[0.5em]
e(\ti) & \mapsto & e^H(\ti)\\[0.2em]
X_i & \mapsto & X_i e_{\alpha}\\[0.2em]
\Phi_b & \mapsto & \Phi^H_b
\end{array}\ 
\ \ \ \ \text{and}\ \ \ \ \  \ 
\sigma\ :\ \ \begin{array}{rcl}
\hH(A_n) & \to & \cI_{N,A}\\[0.5em]
X_i & \mapsto & X_i\\[0.2em]
g_b & \mapsto & g_b^{\cI}
\end{array}\ .
\end{equation}
Item 2 follows immediately from the following two lemmas.
\begin{lemma}\label{lem1A}
The map $\rho$ extends to a morphism of algebras from $\cI_{N,A}$ to $e_{\alpha}H(A_n)_{\blambda,m}$. The map $\sigma$ extends to a morphism of algebras from $\hH(A_n)$ to $\cI_{N,A}$. 
\end{lemma}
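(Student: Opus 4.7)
The plan is to verify each defining relation of the source algebra is satisfied in the target after substituting the images of the generators. For both $\sigma$ and $\rho$ the easy cases are common: the relations involving only the $X_i$'s, only the idempotents $e(\ti)$ (resp. $e^H(\ti)$), and the commutation of $X_j$ with $g_b^{\cI}$ (resp. $\Phi_b^H$) for $j \neq b, b+1$ follow immediately from the definitions, since the correction terms involve only $X_b$ and $X_{b+1}$. For $\rho$, the cyclotomic relations (\ref{rel1-X1})--(\ref{rel1-Xk}) are the defining property of the generalized eigenspace idempotents $e^H(\ti)$ (recall $N$ was chosen large enough), and (\ref{rel1-Phi1}) is immediate from the construction of $\Phi_b^H$.

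The conceptual key driving the verification is that $\Phi_b \in \cI_{N,A}$ is engineered to mimic Lusztig's normalised intertwiner $U'_b = g_b - (q-q^{-1})(1-X^{-\alpha_b})^{-1} \in \hH(A_n)^{loc}$ on the idempotents $e(\ti)$ with $i_b \neq i_{b+1}$ (where $(1-X^{-\alpha_b})^{-1}e(\ti)$ makes sense), and is adjusted by $-q^{-1}$ on the idempotents with $i_b = i_{b+1}$ so that the quadratic relation $\Phi_b^2 e(\ti) = (q+q^{-1})\Phi_b e(\ti)$ of (\ref{rel1-Phi4}) translates exactly into the Hecke quadratic relation for $g_b^{\cI} = \Phi_b - q^{-1}$ on that part (as checked in the sketch above). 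Concretely, for $\sigma$, I would deduce Lusztig's relation (\ref{rel-Lu}) for $g_b^{\cI}$ from (\ref{rel1-Phi2}) by splitting on $r_b(\ti)=\ti$ vs.\ $r_b(\ti)\neq\ti$ and verifying by direct rational-function manipulation that the correction term $(q-q^{-1})(1-X^{-\alpha_b})^{-1}e(\ti)$ contributes precisely the missing piece. The quadratic relation then follows by combining (\ref{rel1-Phi4}) with the formula for $\Gamma_b$. For $\rho$, the parallel verifications of (\ref{rel1-Phi2}) and (\ref{rel1-Phi4}) are obtained by substituting the analogous expression for $\Phi_b^H$ into (\ref{rel-Lu}) and into the quadratic Hecke relation $g_b^2 = (q-q^{-1})g_b+1$.

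The hard part, in both directions, will be the three-term braid relation: deducing the braid relation $g_b^{\cI} g_{b+1}^{\cI} g_b^{\cI} = g_{b+1}^{\cI} g_b^{\cI} g_{b+1}^{\cI}$ in $\cI_{N,A}$ from (\ref{rel1-Phi5}) for $\sigma$, and conversely deriving (\ref{rel1-Phi5}) for $\Phi_b^H$ from the braid relation $g_b g_{b+1} g_b = g_{b+1} g_b g_{b+1}$ in $\hH(A_n)$ for $\rho$. The strategy here is to expand the triple product in terms of $\Phi_b, \Phi_{b+1}$ and the correction summands, isolate the ``leading'' piece $(\Phi_b \Phi_{b+1} \Phi_b - \Phi_{b+1} \Phi_b \Phi_{b+1})e(\ti)$, and compare the remaining lower-order terms against the right-hand side of (\ref{rel1-Phi5}). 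This forces a case analysis indexed by the equalities among $i_b, i_{b+1}, i_{b+2}$: the generic case where all three are distinct (or more generally where $i_b \neq i_{b+2}$) gives zero on both sides; the case $i_b = i_{b+2} \neq i_{b+1}$ produces the rational expression $Z_b$; and the collision case $i_b = i_{b+1} = i_{b+2}$ produces the $\Phi_b - \Phi_{b+1}$ term.

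The main obstacle is thus the bookkeeping of rational identities in $X_b, X_{b+1}, X_{b+2}$ needed to show that the various correction summands recombine, on each $e(\ti)$, into exactly the prescribed form of $Z_b$ (or of $\Phi_b-\Phi_{b+1}$, or of $0$). These identities are of the same flavour as those already used in the proof of Theorem \ref{theoA}, item 1 — in particular, the short manipulations putting expressions over a common denominator $(X_1-X_2)(X_2-X_3)(X_1-X_3)$ and detecting cancellations — but they must now be carried out with $g$-type generators rather than $\psi$-type ones, and without the flexibility of choosing a normalising family $Q_k$. Once this case analysis is completed, the Lemma follows, and hence item 2 of Theorem \ref{theoA} via the obvious observation that the composition $\rho \circ \sigma$ factors through the cyclotomic quotient and coincides with the identity on generators.
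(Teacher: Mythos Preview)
Your proposal is correct and follows essentially the same approach as the paper: both verify the defining relations case by case, exploiting that $\Phi_b$ is a shift of Lusztig's normalised intertwiner $U'_b$ on the idempotents with $r_b(\ti)\neq\ti$ and of $g_b$ by $-q^{-1}$ otherwise, and both identify the three-term braid relation (with its case split on equalities among $i_b,i_{b+1},i_{b+2}$) as the only substantial computation. The paper streamlines the presentation by introducing a symbol $\some\in\{\cI,H\}$ so that each verification is written once and read in both directions simultaneously, and for the braid relation it simply cites \cite{BK} rather than redoing the rational-function bookkeeping you describe; one small point you gloss over is that (\ref{rel1-Phi1}) for $\Phi_b^H$ is not quite ``immediate from the construction'' but uses that $U_b(M_{\ti})\subset M_{r_b(\ti)}$, which the paper spells out.
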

We still denote by $\rho$ and  $\sigma$ the obtained morphisms of algebras. The canonical surjection from $\hH(A_n)$ to $e_{\alpha}H(A_n)_{\blambda,m}$ appearing in the following lemma is the composition of the standard surjection from $\hH(A_n)$ to its quotient $H(A_n)_{\blambda,m}$ followed by the multiplication by $e_{\alpha}$.
\begin{lemma}\label{lem2A}
The map $\sigma$ factors through the canonical surjection $\hH(A_n)\to e_{\alpha}H(A_n)_{\blambda,m}$, and the resulting map provides the two-sided inverse of $\rho$.
\end{lemma}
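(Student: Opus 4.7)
Assuming Lemma \ref{lem1A}, I would structure the proof in two steps: first establish that the morphism $\sigma:\hH(A_n)\to \cI_{N,A}$ descends to a morphism $\bar\sigma:e_\alpha H(A_n)_{\blambda,m}\to\cI_{N,A}$; then verify that $\bar\sigma$ and $\rho$ are mutually inverse by checking their composition on the respective sets of generators. All of the genuinely algebraic content has already been established in Lemma \ref{lem1A} and in Item 1 of Theorem \ref{theoA}; what remains is essentially bookkeeping involving spectral idempotents together with the observation that the ``correcting'' terms in the formulas defining $g_b^{\cI}$ and $\Phi_b^H$ cancel under the morphisms.

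\textbf{Descent of $\sigma$.} To see that $\sigma$ factors through the quotient $H(A_n)_{\blambda,m}$, use that $1=\sum_{\ti\in\alpha}e(\ti)$ in $\cI_{N,A}$ and that $(X_1-i_1)^{m(i_1)}e(\ti)=0$ by relation \eqref{rel1-X1}, so $\sigma$ of the defining cyclotomic relation \eqref{cyc-relA} is zero. Call the induced morphism $\hat\sigma:H(A_n)_{\blambda,m}\to\cI_{N,A}$. To factor further through multiplication by $e_\alpha$ it suffices to show $\hat\sigma(e^H(\ti))=0$ for $\ti\notin\alpha$. For this, note that $e^H(\ti)$ is, by construction, a polynomial $P_{\ti}(X_1,\dots,X_n)$ realising the spectral projector onto the common generalised eigenspace of eigenvalues $(i_1,\dots,i_n)$; by relations \eqref{rel1-X1}--\eqref{rel1-Xk} in $\cI_{N,A}$, for $\ti'\in\alpha$ the element $(X_k-i'_k)e(\ti')$ is nilpotent precisely when $i'_k=i_k$, and hence $P_{\ti}(X)e(\ti')=\delta_{\ti,\ti'}e(\ti')$. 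Summing over $\ti'\in\alpha$ yields $\hat\sigma(e^H(\ti))=e(\ti)$ when $\ti\in\alpha$ and $\hat\sigma(e^H(\ti))=0$ otherwise. Therefore $\hat\sigma(1-e_\alpha)=0$, and $\hat\sigma$ descends to the desired morphism $\bar\sigma:e_\alpha H(A_n)_{\blambda,m}\to\cI_{N,A}$.

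\textbf{Mutual inverses.} One checks on generators. From the definitions \eqref{def-rho-sigmaA}, both $\rho\circ\bar\sigma(X_ie_\alpha)=X_ie_\alpha$ and $\bar\sigma\circ\rho(X_i)=X_i$ are immediate, while the eigenspace computation above gives $\bar\sigma\circ\rho(e(\ti))=\bar\sigma(e^H(\ti))=e(\ti)$. The remaining checks are $\rho(g_b^{\cI})=g_be_\alpha$ and $\bar\sigma(\Phi_b^H)=\Phi_b$; both follow at once by direct substitution, since the two sums of correcting terms appearing in the formulas for $g_b^{\cI}$ and $\Phi_b^H$ are identical up to an overall sign and therefore cancel after applying $\rho$ (respectively $\bar\sigma$), thanks to $\rho(e(\ti))=e^H(\ti)$ and $\bar\sigma(e^H(\ti))=e(\ti)$ for $\ti\in\alpha$.

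\textbf{Expected obstacle.} I do not anticipate a serious difficulty here; the only real subtlety is the identification $\hat\sigma(e^H(\ti))=\delta_{\ti\in\alpha}\,e(\ti)$, which rests on the finite-dimensionality of the commutative subalgebra generated by $X_1,\dots,X_n$ in $\cI_{N,A}$. This finite-dimensionality is exactly what relation \eqref{rel1-Xk}, and thus the auxiliary parameter $N$ in Definition \ref{defI}, is there to provide: it guarantees that the commutative subalgebra in question decomposes as a direct product of local algebras indexed by $\alpha$, so the spectral idempotents behave predictably under $\sigma$.
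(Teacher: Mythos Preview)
Your proposal is correct and follows essentially the same approach as the paper. The paper phrases the key identification $\hat\sigma(e^H(\ti))=\delta_{\ti\in\alpha}\,e(\ti)$ module-theoretically---viewing $\cI_{N,A}$ as an $H(A_n)_{\blambda,m}$-module via $\hat\sigma$ and observing that $e^H(\ti)$ then acts as the projector onto the generalised $\ti$-eigenspace---while you phrase it in terms of an explicit projector polynomial $P_{\ti}$; these are the same argument, since the polynomial relations satisfied by the $X_k$ in $H(A_n)_{\blambda,m}$ transfer to $\cI_{N,A}$ through $\hat\sigma$. (One small wording slip: ``$(X_k-i'_k)e(\ti')$ is nilpotent precisely when $i'_k=i_k$'' should read that $X_k$ has $i'_k$ as its unique eigenvalue on $e(\ti')\cI_{N,A}$; your intent is clear.)
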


\begin{proof}[Proof of Lemma \ref{lem1A}]
For convenience, we will denote during the proof $\Phi_b^{\cI}:=\Phi_b$ and $e^{\cI}(\ti):=e(\ti)$ in $\cI_{N,A}$ and $g_b^H:=g_b$ in $\hH(A_n)$. We need to check that:
\begin{itemize}
\item[-] The defining relations of $\hH(A_n)$ are satisfied by the elements $g_b^{\cI}$ and $X_i$ in $\cI_{N,A}$. The relations to be checked are; Relation (\ref{rel-Lu}), the fact that the $X_i$'s commute and the relations among (\ref{rel-H0}) which do not involve $g_0$.
\item[-] The defining relations of $\cI_{N,A}$ are satisfied by the elements $\Phi_b^H$, $X_ie_{\alpha}$ and $e^H(\ti)$ in $e_{\alpha}H(A_n)_{\blambda,m}$. The relations to be checked are the relations among (\ref{rel1-e})--(\ref{rel1-Phi5}) which do not involve $\Phi_0$.
\end{itemize}

We split the proof into several steps. Below we often indicate only which relations we are checking without specifying for which elements we are doing so (the relations to be checked are explicitly recalled above, so this should not lead to any confusion). 

Moreover, let the symbol $\some$ denote either $\cI$ of $H$. We use this symbol in order to check simultaneously relations in $\cI_{N,A}$ and in $e_{\alpha}H(A_n)_{\lambda,m}$, in exactly the same spirit as explained in Remark \ref{rem-symbol}.

A last preliminary remark is that to prove that some defining relations of $\hH(A_n)$ are satisfied in $\cI_{N,A}$, we check it in front of all the idempotents $e(\ti)$.

\vskip .1cm
$\bullet$ \textbf{Relations (\ref{rel1-e})--(\ref{rel1-Xk}).} They are satisfied by $e^H(\ti)$ and $X_ie_{\alpha}$ in $e_{\alpha}H(A_n)_{\lambda,m}$ by construction, using our assumption on $N$ for (\ref{rel1-Xk}) and using the cyclotomic relation (\ref{cyc-relA}) for (\ref{rel1-X1}).

\vskip .1cm
$\bullet$ The $X_i$'s commute in $\cI_{N,A}$ by definition.

\vskip .1cm
$\bullet$ \textbf{Relations (\ref{rel1-Phi1}).} We use that in $H(A_n)_{\lambda,m}$, we have $U_a(M_{\ti})\subset M_{r_a(\ti)}$ where we recall that $M_{\ti}$ is the image of the projector $e^H(\ti)$ and $U_a$ is the intertwining element of Section \ref{subsec-int}. Indeed this follows from the first relation in (\ref{rel-Ui}). Then, if $r_a(\ti)\neq\ti$, this proves (\ref{rel1-Phi1}). While if $r_a(\ti)=\ti$, this implies that $g_a(M_{\ti})\subset M_{\ti}$ which is also enough to obtain  (\ref{rel1-Phi1}).

\vskip .1cm
$\bullet$ \textbf{Relations (\ref{rel1-Phi2}) and (\ref{rel-Lu}).} We use that the $X_i$'s commute. 

If $r_b(\ti)\neq\ti$, we have
\[(\Phi^{\some}_b X^x-X^{r_b(x)}\Phi^{\some}_b) e^{\some}(\ti) =\bigl(g^{\some}_bX^x - X^{r_b(x)}g^{\some}_b-(q-q^{-1})\frac{X^x-X^{r_a(x)}}{1-X^{-\alpha_a}}\bigr)e^{\some}(\ti)\]
If $r_b(\ti)=\ti$, we have, after a direct calculation,
\begin{multline*}
(\Phi^{\some}_b X^x-X^{r_b(x)}\Phi^{\some}_b) e^{\some}(\ti) =\left(g^{\some}_bX^x - X^{r_b(x)}g^{\some}_b+q^{-1}(X^x-X^{r_b(x)})\right)e^{\some}(\ti)\\
 =\left( g^{\some}_bX^x - X^{r_b(x)}g^{\some}_b-(q-q^{-1})\frac{X^x-X^{r_a(x)}}{1-X^{-\alpha_a}}+ (q-q^{-1} X^{-\alpha_a})\displaystyle\frac{X^x-X^{r_a(x)}}{1-X^{-\alpha_a}}\right)e^{\some}(\ti)\ .
 \end{multline*}

$\bullet$ \textbf{Relations (\ref{rel1-Phi3}) and $\boldsymbol{g_ag_b=g_bg_a}$ if $\boldsymbol{|a-b|>1}$.} From the previous step, we can use that $g^{\some}_aX^{\alpha_b}=X^{\alpha_b}g^{\some}_a$ and $g^{\some}_bX^{\alpha_a}=X^{\alpha_a}g^{\some}_b$ for $\some\in\{\cI,H\}$.

Let us also check that, for $\some\in\{\cI,H\}$, $g^{\some}_a$ commutes with
\[\sum_{\begin{array}{c}\scriptstyle{\ti\in\alpha}\\[-0.2em] \scriptstyle{i_b\neq i_{b+1}}\end{array}}e^{\some}(\textbf{i})\ \ \qquad\text{and}\qquad\ \ \sum_{\begin{array}{c}\scriptstyle{\textbf{i}\in\alpha}\\[-0.2em] \scriptstyle{i_b=i_{b+1}}\end{array}}e^{\some}(\textbf{i})\ .\]
In $\cI_{N,A}$ (that is, if $\some=\cI$), this follows from the fact that $\Phi_a$ commutes with these elements, which is a direct consequence of the defining relations (\ref{rel1-Phi1}) between $\Phi_a$ and the idempotents $e(\ti)$.

In $H(A_n)_{\lambda,m}$ (that is, if $\some=H$), note that these sums are, respectively, the projector onto the characteristic space for $X^{\alpha_b}$ for the eigenvalue 1, and the sum of projectors on characteristic spaces for $X^{\alpha_b}$ for eigenvalues different from 1. So they are defined only in terms of $X^{\alpha_b}$ and therefore commute with $g_a$.

With this, it is immediate to see that, for any $\ti$,
\[\left(\Phi^{\some}_a\Phi^{\some}_b-\Phi^{\some}_b\Phi^{\some}_a\right)e^{\some}(\ti)=\left(g^{\some}_ag^{\some}_b-g^{\some}_bg^{\some}_a\right)e^{\some}(\ti)\ .\]

$\bullet$ \textbf{Relations (\ref{rel1-Phi4}) and $\boldsymbol{g_b^2=(q-q^{-1})g_b+1}$.} We use the previous steps, namely the commuting relations between $g_b^{\some}$ and elements $X^x$, and the commuting relations between $\Phi_b^{\some}$ and $e^{\some}(\ti)$. 

We then have, if $r_b(\ti)=\ti$,
\begin{multline*}
\left((\Phi^{\some}_b)^2-(q+q^{-1})\Phi^{\some}_b\right)e^{\some}(\ti)=\left((g^{\some}_b+q^{-1})^2-(q+q^{-1})(g^{\some}_b+q^{-1})\right)e^{\some}(\ti) \\
=\left((g^{\some}_b)^2-(q-q^{-1})g^{\some}_b-1\right)e^{\some}(\ti)\ .
\end{multline*}
If $r_b(\ti)\neq\ti$, we have
\begin{multline*}
(\Phi^{\some}_b)^2e^{\some}(\ti)=\left((g^{\some}_b-\frac{(q-q^{-1})}{1-X^{-\alpha_b}}\right)^2e^{\some}(\ti) \\
=\left((g^{\some}_b)^2-(q-q^{-1})\left(g^{\some}_b\frac{1}{1-X^{-\alpha_b}}-\frac{1}{1-X^{-\alpha_b}}g^{\some}_b\right)+\frac{(q-q^{-1})^2}{(1-X^{-\alpha_b})^2}  \right)e^{\some}(\ti)\ .
\end{multline*}
This is a straightforward and well-known calculation for which, using the commutation relation between $g^{\some}_b$ and $X^x$, one obtains
\[
(\Phi^{\some}_b)^2e^{\some}(\ti) =\left((g^{\some}_b)^2-(q-q^{-1})g^{\some}_b-1-\Gamma_b  \right)e^{\some}(\ti)\ .\]

$\bullet$ \textbf{Relations (\ref{rel1-Phi5}) and $\boldsymbol{g_bg_{b+1}g_b=g_{b+1}g_bg_{b+1}}$.} We calculate 
$$(\Phi^{\some}_b\Phi^{\some}_{b+1}\Phi^{\some}_b -\Phi^{\some}_{b+1}\Phi^{\some}_b\Phi^{\some}_{b+1})e^{\some}(\ti) - (g^{\some}_bg^{\some}_{b+1}g^{\some}_b -g^{\some}_{b+1}g^{\some}_bg^{\some}_{b+1})e^{\some}(\ti)$$
using all the previously checked relations. This calculation is exactly the same in $\cI_{N,A}$ and in $e_{\alpha}H(A_n)_{\lambda,m}$ (that is, for $\some=\cI$ or $\some=H$). It was done in \cite{BK} and the result is
\[\left\{\begin{array}{ll}
\bigl(\Phi^{\some}_b-\Phi^{\some}_{b+1}\bigr)e^{\some}(\ti) & \ \ \text{if $i_b=i_{b+1}=i_{b+2}$,}\\[0.4em]
Z_b e^{\some}(\ti) & \ \ \text{if $i_b=i_{b+2}\neq i_{b+1}$,}\\[0.4em]
0 & \ \ \text{otherwise.}
\end{array}\right.\]
This concludes the proof of the lemma.
\end{proof}

\begin{proof}[Proof of Lemma \ref{lem2A}]
First, we show that $\sigma$ factors through the quotient $H(A_n)_{\blambda,m}$ of $\hH(A_n)$. To show this, we need to check that the cyclotomic relation
\[\prod_{i\in I_{\lambda_1}\cup\dots\cup I_{\lambda_l}}(X_1-i)^{m(i)}=0\ ,\]
is satisfied in $\cI_{N,A}$. This follows at once from the defining relation $(X_1-i_1)^{m(i_1)}e(\ti)=0$, valid for any $\ti\in\alpha$.

Then, to show that it factors through the canonical surjection to $e_{\alpha}H(A_n)_{\blambda,m}$, we must show that:
\begin{equation}\label{sig-e}
\forall\,\ti\in \bigl(I_{\lambda_1}\cup\dots\cup I_{\lambda_l}\bigr)^n\,,\ \ \ \ \ \sigma\bigl(e^H(\ti)\bigr)=\left\{\begin{array}{cc}
e(\ti) & \text{if $\ti\in\alpha$,}\\[0.3em]
0 & \text{otherwise.}
\end{array}\right.
\end{equation}
We can reproduce the same proof as in \cite[Lemma 3.4]{BK}. We sketch it for the convenience of the readers. At this point, $\sigma$ provides an algebra morphism from $H(A_n)_{\blambda,m}$ to $\cI_{A,N}$, thereby providing $\cI_{A,N}$ with the structure of $H(A_n)_{\blambda,m}$-module. As in Section \ref{subsec-idem}, since $\sigma(X_k)=X_k$, let, for $\textbf{i}=(i_1,\dots,i_n)\in \bigl(I_{\lambda_1}\cup\dots\cup I_{\lambda_l}\bigr)^n$,
\[M_{\textbf{i}}:=\{x\in \cI_{A,N}\ |\ \bigl(X_k-i_k\bigr)^rx=0\ \ \text{for $k=1,\dots,n$ and some $r>0$}\}\ ,\]
be the generalised common eigenspaces in the $H(A_n)_{\blambda,m}$-module $\cI_{A,N}$. We have $\cI_{A,N}=\bigoplus_{\ti} M_{\ti}$ and, by definition of $e^H(\ti)$ in $H(A_n)_{\blambda,m}$, the projection onto $M_{\textbf{i}}$ associated to this decomposition of $\cI_{A,N}$ is $\sigma\bigl(e^H(\ti)\bigr)$. On the other hand, from the defining relations (\ref{rel1-X1})-(\ref{rel1-Xk}) and (\ref{rel1-e}), this idempotent of $\cI_{A,N}$ is $e(\ti)$ if $\ti\in\alpha$ and $0$ otherwise. This concludes the verification of (\ref{sig-e}).

\vskip .2cm
Finally, still denoting $\sigma$ the resulting morphism from $e_{\alpha}H(A_n)_{\blambda,m}$ to $\cI_{A,N}$, the equalities of maps $\sigma\circ\rho=\text{Id}_{\cI_{A,N}}$ and $\rho\circ\sigma=\text{Id}_{e_{\alpha}H(A_n)_{\blambda,m}}$ only have to be checked on the generators of the algebras  and this is immediate from the definitions (\ref{def-rho-sigmaA}) of $\rho$ and $\sigma$ and using (\ref{sig-e}). 
\end{proof}

\subsection{Automorphisms of cyclotomic KLR algebras}\label{sec-autA}

During the proof of the statement that the block $e_{\alpha}H(A_n)_{\blambda,m}$ is isomorphic to the cyclotomic KLR algebra $\tR^{\alpha}_{\blambda,m}$, we exhibit in fact a family of isomorphisms, parametrised by formal power series $f\in K[[z]]$ with no constant term and invertible for the composition. Denote $F_f\ :\ \tR^{\alpha}_{\blambda,m}\to e_{\alpha}H(A_n)_{\blambda,m}$ the corresponding isomorphism of algebras. As a direct consequence, we obtain a family of automorphisms of the cyclotomic KLR algebra $\tR^{\alpha}_{\blambda,m}$ (the one given below is obtained as $F_1^{-1}\circ F_f$).
 
\begin{proposition}
Let $f=\sum_{k\geq0}a_kz^k\in K[[z]]$ be a formal power series with $a_0=0$ and $a_1\neq 0$. There is an automorphism of the algebra $\tR^{\alpha}_{\blambda,m}$ given on the generators by
\[e(\ti)\mapsto e(\ti)\,,\ \ \ \ \ \ \ y_i\mapsto f(y_i)\ \ \ \ \ \ \text{and}\ \ \ \ \ \ \psi_k\mapsto \sum_{\ti\in\alpha}\psi_k Q_k(\ti)e(\ti)\,,\ \ \text{for some $Q_k(\ti)\in K[[y_k,y_{k+1}]]$\,.}\]
\end{proposition}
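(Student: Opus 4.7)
The plan is to apply Theorem \ref{theoA} directly. Its proof of item 1 constructs, for every formal power series $f\in K[[z]]$ with $a_0=0$ and $a_1\neq 0$, an explicit isomorphism of algebras $F_f\colon \tR^{\alpha}_{\blambda,m}\to e_{\alpha}H(A_n)_{\blambda,m}$, obtained by composing the $f$-dependent isomorphism $\tR^{\alpha}_{\blambda,m}\to\cI_{N,A}$ of that item (which sends $y_k\mapsto\sum_{\ti\in\alpha}f(1-i_k^{-1}X_k)e(\ti)$) with the $f$-independent isomorphism $\rho\colon\cI_{N,A}\to e_{\alpha}H(A_n)_{\blambda,m}$ of item 2. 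Specialising to $f(z)=z$ yields the distinguished isomorphism $F_1$, and we set $\Psi:=F_1^{-1}\circ F_f$, which is automatically an algebra automorphism of $\tR^{\alpha}_{\blambda,m}$. The whole task then reduces to verifying the stated formulas on the generators.

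On the idempotents, both $F_f$ and $F_1$ fix $e(\ti)$ by definition, so $\Psi(e(\ti))=e(\ti)$. For the $y_k$'s, one has $F_1(y_k)=\sum_{\ti}(1-i_k^{-1}X_k)e(\ti)$, so $F_1^{-1}\bigl((1-i_k^{-1}X_k)e(\ti)\bigr)=y_ke(\ti)$. Since $X_k$ commutes with $e(\ti)$ and $(1-i_k^{-1}X_k)e(\ti)$ is nilpotent, one may apply $F_1^{-1}$ term by term in the power series $f$ to get
\[F_1^{-1}\bigl(f(1-i_k^{-1}X_k)e(\ti)\bigr)=f(y_k)e(\ti),\]
and summing over $\ti\in\alpha$ gives $\Psi(y_k)=f(y_k)$.

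For the $\psi_k$'s, we recall that in $\cI_{N,A}$ one has $F_f(\psi_k)=\sum_{\ti}\Phi_k\tQ_k^{\cI,f}(\ti)e(\ti)$, where $\tQ_k^{\cI,f}(\ti)$ is an invertible power series in $y_k^{\cI},y_{k+1}^{\cI}$ whose coefficients depend on $f$. Applying $F_1^{-1}$, the construction of the inverse $G$ in the proof of Theorem \ref{theoA} gives $F_1^{-1}(\Phi_k e(\ti))=\psi_k\,Q_k^{R,1}(\ti)\,e(\ti)$ for some $Q_k^{R,1}(\ti)\in K[[y_k,y_{k+1}]]$ (the power series obtained from \eqref{formulaQk} when $f=\mathrm{id}$), while the previous paragraph shows $F_1^{-1}(y_k^{\cI,f})=f(y_k)$, so that $F_1^{-1}(\tQ_k^{\cI,f}(\ti))$ is again a power series in $y_k,y_{k+1}$. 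Combining these, and using that power series in $y_k,y_{k+1}$ commute with $e(\ti)$, we obtain an expression of the required form
\[\Psi(\psi_k)=\sum_{\ti\in\alpha}\psi_k\,Q_k(\ti)\,e(\ti)\qquad\text{with}\qquad Q_k(\ti)\in K[[y_k,y_{k+1}]].\]
The main obstacle is purely bookkeeping: tracking carefully the dependence on $f$ of the auxiliary family $Q_k^{\some}(\ti)$ used in the proof of Theorem \ref{theoA}. No new ideas beyond those of that proof are needed.
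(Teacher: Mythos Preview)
Your proposal is correct and follows exactly the approach indicated in the paper: the automorphism is constructed as $\Psi=F_1^{-1}\circ F_f$, and you simply spell out the verification on generators that the paper leaves implicit. The only cosmetic imprecision is that you write $F_f(\psi_k)$ as an element of $\cI_{N,A}$ rather than of $e_{\alpha}H(A_n)_{\blambda,m}$; since $\rho$ is an isomorphism and $\Psi$ factors as $G_1\circ F$ (with $F$ the item~1 map for $f$ and $G_1$ the item~1 inverse for $f=\mathrm{id}$), this is harmless.
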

In fact, using for example the explicit formulas given in (\ref{formulaQk}), we find easily that one can take for elements $Q_k(\ti)$:
\[
Q_k(\textbf{i}):=\left\lbrace
\begin{array}{ll}
\displaystyle\frac{y_{k+1}-y_k}{f(y_{k+1})-f(y_k)} & \textrm{ when } i_k=i_{k+1}\,, \vspace*{1em}\\
1 & \textrm{ when } i_{k+1}\nleftrightarrow i_k\ \text{or}\ i_k \leftarrow i_{k+1}\,,\vspace*{1em}\\
\displaystyle\frac{f(y_{k+1})-f(y_k)}{y_{k+1}-y_k} & \textrm{ when }i_k \rightarrow i_{k+1}\ \text{or}\ i_k \leftrightarrow i_{k+1}\,.
\end{array}\right.
\]
We note that the proposition could also be checked directly in a straightforward manner.

\section{Proof of the main result}\label{sec-proof}

Theorem \ref{theo} follows immediately from the following two propositions.

\begin{proposition}\label{prop-iso1}
The algebra $V_{\bl,m}^{\beta}$ is isomorphic to the algebra $\cI_N$ for $N$ large enough.
\end{proposition}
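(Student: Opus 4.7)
The plan is to extend the isomorphism of Theorem \ref{theoA}, item 1 (the type A case) by adding a correspondence between $\psi_0\in V^{\beta}_{\bl,m}$ and $\Phi_0\in \cI_N$. Specifically, fix the formal power series $f(z)=z+\frac{z}{1-z}$ (so $a_1=2\neq 0$, legitimate since $\text{char}(K)\neq 2$), which is the one dictated by the type B setting as explained in the introduction: it corresponds to $y_k=i_kX_k^{-1}-i_k^{-1}X_k$ on the eigenspace indexed by $i_k$. Using this $f$ (and its composition inverse $g$), define, exactly as in the type A proof, $y_k^{\cI}:=\sum_{\ti\in\beta}f(1-i_k^{-1}X_k)e(\ti)\in\cI_N$ and $X_k^R:=\sum_{\ti\in\beta}i_k(1-g(y_k))e(\ti)\in V^{\beta}_{\bl,m}$, then set $\psi_k^{\cI}:=\sum_{\ti}\Phi_k\tQ_k(\ti)e(\ti)$ and $\Phi_k^R:=\sum_{\ti}\psi_kQ_k(\ti)e(\ti)$ for $k=1,\dots,n-1$, with the same family $Q_k(\ti)$ as in Lemma \ref{lem-QA}.

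To handle the index $0$, I would introduce a new family of invertible power series $Q_0(\ti)\in K[[y_1]]$ (depending on $\ti$ only through $i_1$) and set
\[
\psi_0^{\cI}:=\sum_{\ti\in\beta}\Phi_0\tQ_0(\ti)e(\ti)\in\cI_N,\qquad \Phi_0^R:=\sum_{\ti\in\beta}\psi_0Q_0(\ti)e(\ti)\in V^{\beta}_{\bl,m}.
\]
The $Q_0(\ti)$ must be chosen so that three compatibilities hold: first, the defining relation \eqref{rel1-Phi2} for $a=0$ in $\cI_N$ is transported, via conjugation by $Q_0$, to \eqref{Rel:V8}--\eqref{Rel:V9} in $V^{\beta}_{\bl,m}$; second, the quadratic relation \eqref{rel1-Phi4} with $a=0$ matches \eqref{Rel:V10}; and third, the mixed braid relation \eqref{rel1-Phi6} matches \eqref{Rel:V11}. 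The first condition amounts to a formula of the shape \eqref{eq-Q1} relating $Q_0(\ti)$ to the ratio $\frac{y_1-(-y_1)}{g(y_1)-g(-y_1)}$ when $i_1^{-1}=i_1$, and the second condition is a type $B_1$ analogue of \eqref{eq-Q2} involving $\Gamma_0$. The choice $f(z)=z+\frac{z}{1-z}$ is precisely what makes these formulas yield the signs and coefficients appearing in \eqref{Rel:V8}--\eqref{Rel:V10} (in particular the factor $2$ in \eqref{Rel:V8} and the forms $\pm y_1$, $-y_1^2$ in \eqref{Rel:V10}). The third compatibility forces a relation of type \eqref{eq-Q3} between $Q_0(\ti)$ and $Q_1(\ti)$; I would prove its existence by giving explicit formulas in all the cases listed in \eqref{Rel:V11}, analogously to \eqref{formulaQk}.

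Granted the existence of such a $Q_0$, the two maps
\[
F:V^{\beta}_{\bl,m}\to\cI_N,\quad \psi_0\mapsto\psi_0^{\cI},\ \psi_k\mapsto\psi_k^{\cI},\ y_k\mapsto y_k^{\cI},\ e(\ti)\mapsto e(\ti),
\]
\[
G:\cI_N\to V^{\beta}_{\bl,m},\quad \Phi_0\mapsto\Phi_0^R,\ \Phi_k\mapsto\Phi_k^R,\ X_k\mapsto X_k^R,\ e(\ti)\mapsto e(\ti),
\]
are easily seen (exactly as in the type A proof) to be mutually inverse on generators since $f$ and $g$ are composition inverses and the $Q_k$ are invertible. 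It remains to check that each of $F$ and $G$ extends to an algebra homomorphism. All relations not involving $\psi_0$ or $\Phi_0$ are handled verbatim by Theorem \ref{theoA}; relations \eqref{Rel:V1}--\eqref{Rel:V2}, \eqref{rel1-e}--\eqref{rel1-X}, and the cyclotomic relations \eqref{cycV}, \eqref{rel1-X1}--\eqref{rel1-Xk} are immediate given that $N$ is taken larger than the nilpotency indices of all the $y_k^{\cI}e(\ti)$ and $(X_k-i_k)e^H(\ti)$.

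The main obstacle will be relation \eqref{Rel:V11} versus \eqref{rel1-Phi6}: there are many separate cases (governed by whether $i_1^2=1$, $i_2^2=1$, and whether $i_1^{-1}=i_2$), each producing a different ``correction term'' involving $Y_0$, $Y_1$ and their expressions from \eqref{Z-Y}. The strategy, mirroring Case 3 of the braid computation in Section \ref{sec-proofA}, is to expand $(\psi_0\psi_1)^2-(\psi_1\psi_0)^2$ by repeated application of the commutation relations \eqref{rel:V4'}, \eqref{rel:V8'} (now proved for the image elements thanks to the previous step), collect the ``leading'' term $(\Phi_0\Phi_1)^2-(\Phi_1\Phi_0)^2$ multiplied by a common invertible power-series factor $\textbf{Q}_0$ built from the $Q_0, Q_1$'s via an identity analogous to the one used for \eqref{eq-Q3}, and then identify the remaining correction terms case by case with the right-hand side of \eqref{Rel:V11}. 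As in type A, the use of the symbol $\some\in\{R,\cI\}$ allows both implications (that is, both $F$ and $G$ are morphisms) to be verified simultaneously, and the invertibility of the global factor $\textbf{Q}_0$ yields the equivalence of the two sets of relations. Once this is done for \eqref{Rel:V11}, the proposition follows.
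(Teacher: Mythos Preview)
Your proposal is correct and follows essentially the same route as the paper: the same choice of $f$, the same renormalisation by invertible power series $Q_0(\ti),Q_1(\ti),\dots$ subject to the analogues of \eqref{eq-Q1}--\eqref{eq-Q3} for $Q_0$, and the same case-by-case verification of the four-term braid relation organised by the $W(B_2)$-orbit of $(i_1,i_2)$. The one technical point you leave implicit, and which the paper isolates as a separate lemma, is the identity $1-g(-z)=\bigl(1-g(z)\bigr)^{-1}$ for the composition inverse $g$ of your $f$; this is exactly what is needed to verify the compatibility ${}^{r_0r_1r_0}Q_1\bigl(r_0r_1r_0(\ti)\bigr)=Q_1(\ti)$ (your ``type \eqref{eq-Q3}'' condition between $Q_0$ and $Q_1$), and it is the precise place where the specific choice of $f$ enters.
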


\begin{proposition}\label{prop-iso2}
The algebra $e_{\beta}H(B_n)_{\blambda,m}$ is isomorphic to the algebra $\cI_N$ for $N$ large enough.
\end{proposition}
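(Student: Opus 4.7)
The plan is to mimic the proof of item 2 of Theorem \ref{theoA} and extend it by a single new generator. Following the definitions of $\Phi^H_b$ and $g^{\cI}_b$ given in Section \ref{sec-proofA}, I would set
\[\Phi^H_0 = g_0 e_{\beta} - \!\!\sum_{\begin{array}{c}\scriptstyle{\ti\in\beta}\\[-0.2em] \scriptstyle{i_1\neq i_1^{-1}}\end{array}}\!\!(p-p^{-1})\bigl(1-X_1^{-2}\bigr)^{-1}e^H(\ti) + \!\!\sum_{\begin{array}{c}\scriptstyle{\ti\in\beta}\\[-0.2em] \scriptstyle{i_1= i_1^{-1}}\end{array}}\!\!p^{-1}e^H(\ti),\]
\[g_0^{\cI} = \Phi_0 + \!\!\sum_{\begin{array}{c}\scriptstyle{\ti\in\beta}\\[-0.2em] \scriptstyle{i_1\neq i_1^{-1}}\end{array}}\!\!(p-p^{-1})\bigl(1-X_1^{-2}\bigr)^{-1}e(\ti) - \!\!\sum_{\begin{array}{c}\scriptstyle{\ti\in\beta}\\[-0.2em] \scriptstyle{i_1= i_1^{-1}}\end{array}}\!\!p^{-1}e(\ti).\]
These formulas are the natural analogue of the type A constructions, obtained by dividing the intertwining element $U_0=g_0(1-X^{-\alpha_0})-(p-p^{-1})$ by $(1-X^{-\alpha_0})$ away from the fixed-point locus, and by shifting $g_0$ to an eigenvalue of its quadratic relation on the fixed-point locus. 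I would then define the maps $\rho:\cI_N\to e_{\beta}H(B_n)_{\blambda,m}$ and $\sigma:\hH(B_n)\to\cI_N$ as in \eqref{def-rho-sigmaA}, sending $\Phi_0\mapsto\Phi_0^H$ and $g_0\mapsto g_0^{\cI}$, and keeping the type A formulas for the remaining generators.

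The bulk of the work is then to verify that $\rho$ and $\sigma$ extend to algebra homomorphisms. The defining relations of $\cI_N$ (resp. $\hH(B_n)$) which do not involve $\Phi_0$ (resp. $g_0$) are already handled by the proof of Lemma \ref{lem1A}. The new relations to check are \eqref{rel1-Phi1} for $a=0$ (equivalent to the intertwining property $U_0X_1=X_1^{-1}U_0$), \eqref{rel1-Phi2} for $a=0$ (a direct consequence of \eqref{rel-Lu}), \eqref{rel1-Phi3} for $a=0$, $b\geq 2$ (immediate since $\Phi_0^H$ depends only on $g_0$ and $X_1$), and the quadratic relation \eqref{rel1-Phi4} for $a=0$, which is obtained from $g_0^2=(p-p^{-1})g_0+1$ by the same direct computation used for $b\geq 1$, treating separately the cases $r_0(\ti)=\ti$ and $r_0(\ti)\neq \ti$. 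In the reverse direction, the corresponding finite Hecke relation $g_0^2=(p-p^{-1})g_0+1$ follows symmetrically.

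The main obstacle is the mixed braid relation \eqref{rel1-Phi6}, which is substantially more involved than the type A braid. My strategy is to exploit the already-verified commutation rules \eqref{rel1-Phi1}--\eqref{rel1-Phi4} to rewrite $\Phi_0^H$ and $\Phi_1^H$ in a form where the Coxeter braid relation $g_0g_1g_0g_1=g_1g_0g_1g_0$ in $\hH(B_n)$ (equivalently $U_0U_1U_0U_1=U_1U_0U_1U_0$, see \eqref{rel-Ui}) can be applied; the terms not involving $g_0,g_1$ produce rational expressions in $X_1,X_2$ that, after simplification idempotent by idempotent, match the right-hand side of \eqref{rel1-Phi6}. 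I would treat each of the five cases (1 through 4, plus ``otherwise'') separately, noting that in the generic ``otherwise'' case the denominators $1-X^{-\alpha_0}$, $1-X^{-\alpha_1}$, $1-X^{-r_0(\alpha_1)}$, $1-X^{-r_1(\alpha_0)}$ are all invertible on $e^H(\ti)$, so both sides can be compared after multiplication by the common factor, while the degenerate cases 1--4 (where some of these denominators vanish on $e^H(\ti)$) produce the listed correction terms by the same residue-type argument used for \eqref{rel1-Phi5} in the type A proof.

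Finally, the factoring of $\sigma$ follows exactly the type A pattern: the cyclotomic relation \eqref{cyc-rel} is satisfied in $\cI_N$ by \eqref{rel1-X1}, so $\sigma$ descends to $H(B_n)_{\blambda,m}$; and $\sigma(e^H(\ti))=e(\ti)$ if $\ti\in\beta$ and $0$ otherwise, by the same module-theoretic argument as in Lemma \ref{lem2A}, using now the $W(B_n)$-orbit structure (the relevant point being that $\sigma(X_1)=X_1$ satisfies \eqref{rel1-X1}, which together with Proposition \ref{prop-eig2} and Proposition \ref{prop-eig1} forces the generalised eigenvalues to lie in $\beta$). The equalities $\rho\circ\sigma=\mathrm{Id}$ and $\sigma\circ\rho=\mathrm{Id}$ are then immediate on generators from the definitions.
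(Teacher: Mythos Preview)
Your proposal is correct and follows essentially the same route as the paper: the formulas for $\Phi_0^H$ and $g_0^{\cI}$, the maps $\rho$ and $\sigma$, and the reduction to the relations involving the index $0$ all coincide with the paper's argument. One small inaccuracy worth flagging: in the ``otherwise'' case of \eqref{rel1-Phi6} it is not true that all four denominators $1-X^{-\alpha_0}$, $1-X^{-\alpha_1}$, $1-X^{-r_0(\alpha_1)}$, $1-X^{-r_1(\alpha_0)}$ are invertible on $e^H(\ti)$ --- the paper organises the braid check by the seven possible $W(B_2)$-orbit types of $(i_1,i_2)$, and two of these (stabiliser generated by $r_0$ alone, or by $r_1$ alone) fall under ``otherwise'' yet have exactly one non-invertible denominator; there the vanishing of $Br_\Phi e^H(\ti)$ comes from the braid relation for the $U'_a$ together with the observation that the surviving factor $g_a+q_a^{-1}$ commutes with the relevant product of $U'$'s. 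Apart from this refinement of the case analysis, your sketch matches the paper.
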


We will use Theorem \ref{theoA} with $\alpha=\beta$. In fact, a large part of the proof of the two propositions has been already done in Theorem \ref{theoA} and we will only need to deal with the additional defining relations coming from the type B setting. The method and steps that we use are the same as for the proof of Theorem \ref{theoA}.

\vskip .1cm
First fix an integer $N$ as in the beginning of the proof of Theorem \ref{theoA}. Namely, we assume first that $N$ is larger than all the nilpotency indices of the elements $(X_k-i_k)e^H(\ti)$ in $e_{\beta}H(B_n)_{\blambda,m}$, for $k=1,\dots,n$ and $\ti\in\beta$ (these elements are nilpotent by construction of the idempotents $e^H(\ti)$). 

Second, in $V_{\bl,m}^{\beta}$, the elements $y_ke(\ti)$ are nilpotent for any $k=1,\dots,n$ and any $\ti\in\beta$, thanks to Lemma \ref{lem-nily}. We also assume $N$ to be larger than all the nilpotency indices of these elements.

\paragraph{Proof of Proposition \ref{prop-iso1}.} We repeat (and extend to the type B setting) the first step of the proof of \textbf{item 1} in Section \ref{sec-proofA}. So we set, for all $k=1,\ldots, n$ and all $s=0,1,\ldots,n-1$,
\begin{equation*}
y_k^V:=y_k\,,\ \ \psi_s^V:=\psi_s\ \ \ \text{in $V^{\beta}_{\blambda,m}$,}\ \ \ \qquad\text{and}\ \ \ \qquad X_k^{\cI}:=X_k\,,\ \ \Phi_s^{\cI}:=\Phi_s \ \ \text{in $\cI_{N}$.}
\end{equation*}
A convenient notation will also be, for all $k=1,\ldots, n$,
\begin{equation}\label{nota2}
\oX_k:=X_k^{-1} .
\end{equation}

Let $k\in\{1,\dots,n\}$. We define elements of $\cI_{N}$ as follows:
\[y^{\cI}_k:=\sum_{\textbf{i} \in \beta}(i_k \oX_k-i_k^{-1}X_k)\idmep\in\cI_{N}\ .\]
Note that
\[i_k \oX_k-i_k^{-1}X_k=\bigl(1-(1-i_k^{-1}X_k)\bigr)^{-1}-\bigl(1-(1-i_k^{-1}X_k)\bigr)\ ,\]
Therefore, considering the following formal power series $f\in K[[z]]$,
\[f(z):=\frac{1}{1-z}-(1-z)=z+\frac{z}{1-z}\ ,\]
we have 
\[y^{\cI}_k:=\sum_{\textbf{i} \in \beta}f(1-i_k^{-1}X_k)e(\ti)\ .\]
and we are therefore in the setting of the proof of Theorem \ref{theoA}. Indeed we obviously have that $f$ has no constant term and moreover its degree 1 term is $2z$ which is non-zero since $K$ has characteristic different from 2. 

As before, let $g$ denote the composition inverse of $f$ and define elements of $V^{\beta}_{\blambda,m}$ by
\[X_k^V:=\sum_{\textbf{i} \in \beta}i_k\bigl(1-g(y_k)\bigr)\idmep\in V^{\beta}_{\blambda,m}\ .\]
We will not need an explicit expression for $g$, but we will need the following information.
\begin{lemma}\label{lem-fg}
The formal power series $g$ satisfies
\[1-g(-z)=\frac{1}{1-g(z)}\ .\]
\end{lemma}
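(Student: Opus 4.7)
The plan is to discover a Möbius-like involutive substitution $\sigma$ on formal power series (with zero constant term) that conjugates $f$ to $-f$, and then transfer this symmetry from $f$ to its compositional inverse $g$ by composing appropriately. This is much cleaner than trying to compute $g$ explicitly (which would involve $\sqrt{z^2+4}$) or manipulating the quadratic $g(z)^2-(2+z)g(z)+z=0$ that $g$ satisfies.

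First I would guess $\sigma$ by inspection. Since $f(z)=\frac{1}{1-z}-(1-z)$ is an antisymmetric combination of the mutually reciprocal quantities $1-z$ and $\frac{1}{1-z}$, the natural candidate is the substitution defined by requiring $1-\sigma(z)=\frac{1}{1-z}$, i.e.
\[
\sigma(z)=\frac{-z}{1-z}\in zK[[z]]\,.
\]
A direct calculation then gives $f(\sigma(z))=(1-z)-\frac{1}{1-z}=-f(z)$, establishing the identity $f\circ\sigma=-f$ in $K[[z]]$. (One also notices in passing that $\sigma$ is an involution, consistent with $f^{-1}\circ(-f)$ being an involution.)

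Next I would apply $g$ to both sides of $f(\sigma(z))=-f(z)$ and substitute $z\mapsto g(w)$; since $\sigma$, $g$ and $-\mathrm{id}$ all have zero constant term, these compositions are all legal operations in $K[[w]]$. Using $g\circ f=\mathrm{id}$ on both sides gives $\sigma(g(w))=g(-w)$, i.e.\ $g(-w)=\frac{-g(w)}{1-g(w)}$, and therefore
\[
1-g(-w)\;=\;\frac{(1-g(w))+g(w)}{1-g(w)}\;=\;\frac{1}{1-g(w)}\,,
\]
which is exactly the claim.

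The only genuinely creative step is spotting the involution $\sigma$; after that everything is formal bookkeeping in $K[[w]]$, with no further hypotheses on the characteristic required (the assumption $\mathrm{char}(K)\neq 2$ was already used upstream to guarantee that $g$ exists, and plays no role here).
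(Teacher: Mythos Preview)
Your proof is correct and follows essentially the same approach as the paper's own proof: both reduce the claim to the identity $f\bigl(\frac{-z}{1-z}\bigr)=-f(z)$, which is checked directly from the explicit form of $f$, and then pass to $g$ via the compositional inverse. The only difference is presentational---you run the argument forward from this identity after naming the substitution $\sigma$, while the paper runs it backward via a chain of equivalences.
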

\begin{proof} We have the following sequence of equivalences:
\[1-g(-z)=\frac{1}{1-g(z)}\ \ \Leftrightarrow\ \ g(-z)=\frac{-g(z)}{1-g(z)}\ \ \Leftrightarrow\ \ -z=f\bigl(\frac{-g(z)}{1-g(z)}\bigr)\ \ \Leftrightarrow\ \ -f(z)=f\bigl(\frac{-z}{1-z}\bigr)\ ,\]
where we applied $f$ to both sides to obtain the second equivalence, and we change the variable $z\mapsto f(z)$ to obtain the last one. The last condition on $f$ is easy to check using the explicit expression for $f$.
\end{proof}

We again consider a symbol $\some$ which we will use as an upper index to denote both $V$ and $\cI$, and we use the same notation $P^{\some}(\ti)$ as defined before in (\ref{nota1}) for a rational expression $P$ in $X_1,\dots,X_n$.

Next, we fix a family of invertible power series $Q^{\some}_k(\ti) \in K[[y_k, y_{k+1}]]$, where $k=1,\dots,n-1$ and $\ti\in\beta$, as in the proof of Theorem \ref{theoA}, which satisfies the same conditions (\ref{eq-Q1})--(\ref{eq-Q3}) as earlier.  

We add a family of invertible power series $Q_0(i) \in K[[z]]$, where $i\in K$, and we set for $\some \in \{ V, \cI \}$:
$$Q^{\some}_0(i)=Q_0(i)\bigl(y^{\some}_1\bigr) \in K[[y^{\some}_1]]\ \ \ \quad\text{and}\quad\ \ \ \tQ^{\some}_0(\textbf{i}):=\bigl(Q^{\some}_0(\textbf{i})\bigr)^{-1}\ ,$$
and we will use the following notation:
$$Q^{\some}_0(\ti)=Q^{\some}_0(i_1)\ \ \ \quad\text{and}\quad\ \ \ \tQ^{\some}_0(\textbf{i})=\tQ^{\some}_0(i_1)=\bigl(Q^{\some}_0(i_1)\bigr)^{-1}\ .$$

We assume that the following additional conditions are satisfied (where $\Gamma_0$ is defined in Definition \ref{defI} and $r_0$ acting on formal power series in $y^{\some}_1$ replaces $y^{\some}_1$ by $-y^{\some}_{1}$):
\begin{equation}\label{eq-Q4}
Q_0^{\some}(\textbf{i})=i_1^{-1}\bigl( p^{-1}\oX^{\some}_1(\ti)-pX^{\some}_{1}(\ti) \bigr)\ \ \ \ \ \textrm{ if } i_1^{-1}=i_1\,,
\end{equation}
\begin{equation}\label{eq-Q5}
{^{r_0}}Q^{\some}_0\bigl(r_0(\textbf{i})\bigr)Q^{\some}_0(\textbf{i})=\left\lbrace
\begin{array}{ll}
\Gamma_0^{\some}(\ti) & \textrm{ if } i^{-1}_1 \stackrel{p}\nleftrightarrow i_{1} \vspace*{0.6em}\\
\Gamma_0^{\some}(\ti)\big( y_{1}^{\some}\big)^{-1} & \textrm{ if }i^{-1}_1 \stackrel{p}\longrightarrow i_{1} \vspace*{0.6em}\\
-\Gamma_0^{\some}(\ti)\big(y_{1}^{\some} \big)^{-1} & \textrm{ if }i^{-1}_1 \stackrel{p}\longleftarrow i_{1} \vspace*{0.6em}\\
-\Gamma_0^{\some}(\ti)\big( y_{1}^{\some}\big)^{-2} & \textrm{ if }i^{-1}_1 \stackrel{p}\longleftrightarrow i_{1} 
\end{array}
\right.
\end{equation}
\begin{equation}\label{eq-Q6}
{}^{r_1r_{0}r_1}Q^{\some}_0\bigl(\ti)=Q^{\some}_{0}\bigl(\ti)\ \ \ \ \ \text{and}\ \ \ \ \ \ {}^{r_0r_{1}r_0}Q^{\some}_1\bigl(r_0r_1r_0(\textbf{i})\bigr)=Q^{\some}_{1}(\textbf{i})\ .
\end{equation}

\begin{lemma}\label{lem-QB}
There exists such a family satisfying Conditions (\ref{eq-Q1})--(\ref{eq-Q3}) and (\ref{eq-Q4})--(\ref{eq-Q6}).
\end{lemma}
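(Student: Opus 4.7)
My plan is to extend the construction of Lemma \ref{lem-QA} by adding an explicit family $Q_0^\some(i)$, keeping the $Q_k^\some$ for $k \geq 1$ essentially as in (\ref{formulaQk}) (with, if necessary, minor symmetrisations in the $\rightarrow$ and $\leftarrow$ cases to guarantee the new compatibility). Conditions (\ref{eq-Q1})--(\ref{eq-Q3}) were already verified in Lemma \ref{lem-QA} for such a choice, so only (\ref{eq-Q4})--(\ref{eq-Q6}) remain to be established.

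First, I would define $Q_0^\some(i)$ case by case, paralleling the structure of (\ref{formulaQk}) with $(q, X_k, X_{k+1})$ replaced by $(p, X_1^\some, \oX_1^\some)$. In the case $i_1^{-1} = i_1$ take the expression in (\ref{eq-Q4}) as the definition; for the four remaining cases $\stackrel{p}{\nleftrightarrow}$, $\stackrel{p}{\longrightarrow}$, $\stackrel{p}{\longleftarrow}$, $\stackrel{p}{\longleftrightarrow}$, I would choose invertible power series built from $X_1^\some$ and $\oX_1^\some$, inserting factors of the invertible series $g(y_1^\some)/y_1^\some$ where required to cancel the denominators $(1 - X_1^{\pm 2})$ appearing in $\Gamma_0$. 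Invertibility is then immediate by evaluating at $y_1^\some = 0$ and using the hypotheses $p^2 \neq 1$ together with $i_1 \notin \{\pm p^{\pm 1}\}$ or $i_1 \in \{\pm p^{\pm 1}\}$ according to the case.

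With these formulas in hand, (\ref{eq-Q4}) holds by construction, and (\ref{eq-Q5}) is verified case by case using $X_1^\some \oX_1^\some = 1$, exactly in the spirit of the verification of (\ref{eq-Q2}) in Lemma \ref{lem-QA}. The first identity in (\ref{eq-Q6}) is essentially tautological: since $Q_0^\some(\ti)$ depends on $\ti$ only through $i_1$ and is a power series in $y_1^\some$ alone, while $r_1 r_0 r_1$ acts on power series by $(y_1, y_2) \mapsto (y_1, -y_2)$ and on $\ti$ by $(i_1, i_2) \mapsto (i_1, i_2^{-1})$, both $y_1^\some$ and the parameter $i_1$ are fixed.

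The main obstacle is the second identity in (\ref{eq-Q6}), namely ${}^{r_0 r_1 r_0} Q_1^\some(r_0 r_1 r_0(\ti)) = Q_1^\some(\ti)$. One first computes that $r_0 r_1 r_0$ sends $\ti$ to $(i_2^{-1}, i_1^{-1}, i_3, \ldots, i_n)$ and acts on power series in $y_1, y_2$ by $(y_1, y_2) \mapsto (-y_2, -y_1)$, and observes that under this involution the cases $i_1 = i_2$, $\nleftrightarrow$, $\leftrightarrow$ are each preserved while the $\rightarrow$ and $\leftarrow$ cases are exchanged. The verification must then be carried out in each of these five cases, and the crucial ingredient is Lemma \ref{lem-fg}: the identity $1 - g(-z) = 1/(1-g(z))$ precisely encodes, at the level of formal power series, the symmetry $X \leftrightarrow X^{-1}$, and in particular yields the key substitution rule $X_k^\some(i^{-1})|_{y_k \mapsto -y_{k'}} = \oX_{k'}^\some(i)$. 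This is exactly what is needed to transform the $Q_1$-formula evaluated at $(i_2^{-1}, i_1^{-1})$ into the $Q_1$-formula at $(i_1, i_2)$, and it is precisely this feature of the choice $f(z) = z + z/(1-z)$ that makes the generalisation of the Brundan--Kleshchev-type isomorphism possible in type $B$.
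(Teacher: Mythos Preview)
Your proposal follows essentially the same approach as the paper: keep the formulas (\ref{formulaQk}) for $Q_k^\some$ with $k\geq 1$, define $Q_0^\some$ by analogous explicit formulas in each of the five cases, verify (\ref{eq-Q4})--(\ref{eq-Q5}) directly, note that the first identity in (\ref{eq-Q6}) is trivial, and reduce the second identity to a case-by-case check using Lemma~\ref{lem-fg}.

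Two small corrections. First, no ``symmetrisations'' of the $Q_k^\some$ for $k\geq 1$ are needed: the paper uses the formulas (\ref{formulaQk}) verbatim, and they already satisfy the second condition in (\ref{eq-Q6}). Second, your claim that the $\rightarrow$ and $\leftarrow$ cases are exchanged under $r_0r_1r_0$ is wrong: if $i_2=q^2 i_1$ then $i_1^{-1}=q^2 i_2^{-1}$, so $(i_2^{-1},i_1^{-1})$ is again in the $\rightarrow$ case. Each of the five cases is in fact preserved. This matters, because with your stated pairing you would be trying to match the $\leftarrow$-formula (which is $1$) against the $\rightarrow$-formula, which fails. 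With the correct pairing the check goes through; the key computational input is exactly what you identified, namely the substitution rules for $X_k^\some(\ti)$ and $g(y_{k+1})-g(y_k)$ under $r_0r_1r_0$ coming from Lemma~\ref{lem-fg}.
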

\begin{proof} For $Q^{\some}_k(\ti)$ with $k\neq 0$, we take the same formulas (\ref{formulaQk}) as in the proof of the corresponding lemma \ref{lem-QA} and we thus already know that Conditions (\ref{eq-Q1})--(\ref{eq-Q3}) are satisfied.

It is easy to see that the following formulas
\begin{equation*}
Q_0^{\some}(\textbf{i}):=\left\lbrace
\begin{array}{ll}
i_1^{-1}\bigl( p^{-1}\oX^{\some}_1(\ti)-pX^{\some}_{1}(\ti) \bigr) & \textrm{ when } i^{-1}_1=i_1 \vspace*{1em}\\
\displaystyle\frac{p \oX^{\some}_1(\ti)-p^{-1}X^{\some}_{1}(\ti)}{\oX^{\some}_1(\ti)-X^{\some}_{1}(\ti)} & \textrm{ when } i^{-1}_1 \stackrel{p}\nleftrightarrow i_{1} \vspace*{1em}\\
\displaystyle\frac{(p X^{\some}_{1}(\ti)-p^{-1}\oX^{\some}_{1}(\ti))\,p\,i_1^{-1}}{(\oX^{\some}_1(\ti)-X^{\some}_{1}(\ti))(X^{\some}_{1}(\ti)-\oX^{\some}_{1}(\ti))}\quad & \textrm{ when }i^{-1}_1 \stackrel{p}\longrightarrow i_{1} \vspace*{1em}\\
1 & \textrm{ when }i^{-1}_1 \stackrel{p}\longleftarrow i_{1} \vspace*{1em}\\
\displaystyle\frac{p\,i_1^{-1}}{\oX^{\some}_1(\ti)-X^{\some}_{1}(\ti)} & \textrm{ when }i^{-1}_1 \stackrel{p}\longleftrightarrow i_{1}
\end{array}\right.
\end{equation*}
indeed define invertible power series
(in particular we recall that $p^2\neq 1$), where the dependence on $\ti$ of $Q_0^{\some}(\textbf{i})$ is only through $i_1$. Moreover, one sees that, by definition of $g$, 
$$\begin{array}{ll}
p \oX^{\some}_1(\ti)-p^{-1}X^{\some}_{1}(\ti)\displaystyle=p^{-1}i_1 \Bigl(\frac{1}{1-g(y^{\some}_{1})}-(1-g(y^{\some}_1))\Bigr)=p^{-1}i_1y^{\some}_1 \quad & \text{if $i_{1}=p^2 i_1^{-1}$,}\\[0.5em]
p X^{\some}_1(\ti)-p^{-1}\oX^{\some}_{1}(\ti)\displaystyle=p^{-1}i^{-1}_1 \Bigl((1-g(y^{\some}_1))-\frac{1}{1-g(y^{\some}_{1})}\Bigr)=-p^{-1}i^{-1}_1y^{\some}_1 \quad & \text{if $i_{1}=p^{-2} i_1^{-1}$.}
\end{array}$$
So, using the defining formula in Definition \ref{defI} for $\Gamma_0$, it is now easy to see that the properties (\ref{eq-Q4}) and (\ref{eq-Q5}) are satisfied. The first relation in (\ref{eq-Q6}) is obvious since ${}^{r_1}Q^{\some}_0\bigl(\ti)$ is a power series in $y_2$ only and therefore is stable under the action of $r_0$.

The second equality in (\ref{eq-Q6}) is more difficult to check. First, note that the action of $r_0r_1r_0$ on $\ti$ transforms $(i_1,i_2)$ into $(i_2^{-1},i_1^{-1})$. Similarly, the action of $r_0r_1r_0$ on $K[[y_1,y_2]]$ replaces $y_1$ by $-y_2$ and $y_2$ by $-y_1$. Then, using Lemma \ref{lem-fg} for the formal power series $g$, we find the following formulas:
\begin{multline*}
{}^{r_0r_1r_0}\bigl(g(y_2)-g(y_1)\bigr)=g(-y_1)-g(-y_2)=1-g(-y_2)-(1-g(-y_1))\\
=\frac{1}{1-g(y_2)}-\frac{1}{1-g(y_1)}=\frac{g(y_2)-g(y_1)}{(1-g(y_1))(1-g(y_2))}\ ,
\end{multline*}
\[{}^{r_0r_1r_0}X_1(r_0r_1r_0(\ti))=i_2^{-1}(1-g(-y_2))=\frac{i_2^{-1}}{1-g(y_2)}=\frac{X_1(\ti)}{i_1i_2(1-g(y_1))(1-g(y_2))}\ ,\]
\[{}^{r_0r_1r_0}X_2(r_0r_1r_0(\ti))=i_1^{-1}(1-g(-y_1))=\frac{i_1^{-1}}{1-g(y_1)}=\frac{X_2(\ti)}{i_1i_2(1-g(y_1))(1-g(y_2))}\ .\]
With this, a direct case-by-case inspection of Formulas (\ref{formulaQk}) allows us to check the second property in (\ref{eq-Q6}).
\end{proof}

Now we set, for $k=0,1,\dots,n-1$,
\[\psi_k^{\cI}:=\sum_{\textbf{i}\in \beta} \Phi_k^{\cI}\tQ_k^{\cI}(\textbf{i})\idmep\ \in\cI_{N} \ \ \ \ \ \text{and}\ \ \ \ \ \Phi_k^V:=\sum_{\textbf{i}\in \beta} \psi_k^VQ^V_k(\textbf{i})\idmep\ \in V^{\beta}_{\blambda,m}\ . \]
Finally we are ready to give the isomorphism. We consider the following maps, where $i\in\{1,\dots,n\}$, $k\in\{0,1,\dots,n-1\}$ and $\ti\in\beta$.
\begin{equation}\label{def-F-G}
F\ :\ \begin{array}{rcl}
V^{\beta}_{\blambda,m} & \longrightarrow & \cI_{N} \\[0.5em]
\idmep &\mapsto & \idmep \\[0.2em]
y_i &\mapsto & y_i^{\cI} \\[0.2em]
\psi_k &\mapsto & \psi_k^{\cI}
\end{array}
\ \ \ \ \ \text{and}\ \ \ \ \ 
G\ :\ \begin{array}{rcl}
\cI_{N} &\longrightarrow & V^{\beta}_{\blambda,m} \\[0.5em]
\idmep &\mapsto & \idmep \\[0.2em]
X_i &\mapsto & X_i^V\\[0.2em]
\Phi_k &\mapsto & \Phi_k^V
\end{array}\ .
\end{equation}
Assuming that $F$ and $G$ extend to algebra homomorphisms, we have $F\circ G=\text{Id}_{\cI_{N}}$ and $G\circ F=\text{Id}_{V^{\beta}_{\blambda,m}}$, since it is satisfied on the generators (we know this for all generators except $\psi_0$ and $\Phi_0$ from the proof in type A, see (\ref{def-F-GA}); it is immediate for $\psi_0$ and $\Phi_0$). Thus the proof will be finished when we show that $F$ and $G$ extend to algebra homomorphisms.

\paragraph{Proof that $F$ and $G$ extend to algebra homomorphisms.} The method is the same as in the type A situation. Most of the defining relations have already been checked at the corresponding stage of the proof of Theorem \ref{theoA}. We use the symbol $\some\in\{V,\cI\}$ as explained in Remark \ref{rem-symbol}.

\vskip .1cm
$\bullet$ \textbf{Relations (\ref{rel1-Phi1}) and (\ref{Rel:V3}) for $\boldsymbol{a=0}$.} The proof of these relations with $a>0$ used for Theorem \ref{theoA}, item 1, can be repeated without modification.

\vskip .1cm
$\bullet$ \textbf{Relations (\ref{rel1-Phi2}) with $\boldsymbol{a=0}$ and (\ref{Rel:V8})-(\ref{Rel:V9}).} Here, for clarity, we will treat each relation separately. We start with (\ref{Rel:V9}) in $\cI_{N}$. We have that $\Phi_0$ commutes with $X_j$ if $j>1$ and therefore, in $\cI_N$, we have
\[\psi^{\cI}_0y^{\cI}_je(\ti)=\Phi_0f(1-i_j^{-1}X_{j})\tQ^{\cI}_0(\ti)e(\ti)=f(1-i_j^{-1}X_{j})\Phi_0\tQ^{\cI}_0(\ti)e(\ti)=y^{\cI}_j\psi^{\cI}_0e(\ti)\ ,
\]
where we notice that $f(1-i_j^{-1}X_{j})e(r_0(\ti))=y^{\cI}_je(r_0(\ti))$ since $j>1$.

Then, using here the explicit expression of the power series $f$, we have $y^{\cI}_{0} e(\ti)=(i_1X_1^{-1}-i_1^{-1}X_1)e(\ti)$ and
$$y^{\cI}_{0} e(r_0(\ti))=(i_1^{-1}X_1^{-1}-i_1X_1)e(r_0(\ti))=-{}^{r_0}(i_1X_1^{-1}-i_1^{-1}X_1)e(r_0(\ti))\ ,$$
where the action of $r_0$ is to invert $X_1$. Therefore, we have in $\cI_{N}$, using (\ref{rel1-Phi2}), 
\begin{multline*}
\bigl(\psi^{\cI}_0y^{\cI}_1+y^{\cI}_{1}\psi_0^{\cI}\bigr)e(\ti)=\bigl(\Phi_0(i_1X_1^{-1}-i_1^{-1}X_1)-{}^{r_0}(i_1X_1^{-1}-i_1^{-1}X_1)\Phi_0\bigr)\tQ^{\cI}_0(\ti)e(\ti)\\
=\delta_{i_1,i^{-1}_{1}}(p X_{1}-p^{-1}X^{-1}_1)\frac{(i_1X_1^{-1}-i_1^{-1}X_1)-{}^{r_0}(i_1X_1^{-1}-i_1^{-1}X_1)}{X_{1}-X_1^{-1}}\tQ^{\cI}_0(\ti)e(\ti)\ .
\end{multline*}
This is equal to $0$ if $i_1^{-1}\neq i_{1}$. Now assume that $i_1^{-1}=i_{1}$. From Condition (\ref{eq-Q4}), we have $(p X_{1}-p^{-1}X^{-1}_1)\tQ^{\cI}_0(\ti)e(\ti)=-i_1$ and thus we find finally that 
\[\bigl(\psi^{\cI}_0y^{\cI}_1+y^{\cI}_{1}\psi_0^{\cI}\bigr)e(\ti)=2e(\ti)\ .\]

Now we deal with (\ref{rel1-Phi2}) for $a=0$ in $V^{\beta}_{\blambda,m}$. Let $\Pi$ be a Laurent polynomial in $X^V_1$, and let $P$ be the formal power series in $y_1$ defined by $P:=\Pi\bigl(i_1\bigl(1-g(y_1)\bigr)\bigr)$. Note that $\Pi e(\ti)=Pe(\ti)$ and that, using Lemma \ref{lem-fg},
\begin{multline*}
{}^{r_0}\Pi e(r_0(\ti))=\Pi(X^{-1}_1)e(r_0(\ti))=\Pi\bigl(i_1^{-1}\bigl(1-g(y_1)\bigr)^{-1}\bigr)e(r_0(\ti))=\Pi\bigl(i_1^{-1}\bigl(1-g(-y_1)\bigr)\bigr)e(r_0(\ti))={}^{r_0}Pe(r_0(\ti))\ ,
\end{multline*}
where ${}^{r_0}\Pi$ stands for replacing $X^V_1$ by its inverse in the Laurent polynomial $\Pi$, while ${}^{r_0}P$ stands for replacing $y_1$ by $-y_1$ in the power series $P$. Now, using Relation (\ref{rel:V8'}) in $V^{\beta}_{\blambda,m}$, we find:
\begin{multline*}
\bigl(\Phi^{V}_0\Pi-{}^{r_0}\Pi\Phi_0^{V}\bigr)e(\ti)=\bigl(\psi_0P-{}^{r_0}P\psi_0\bigr)Q^{V}_0(\ti)e(\ti)=\delta_{i_1,i_1^{-1}}\frac{P-{}^{r_0}P}{y_1}Q^{V}_0(\ti)e(\ti)\\
=\delta_{i_1,i_1^{-1}}(p X_{1}-p^{-1}X_1^{-1})\frac{\Pi-{}^{r_0}\Pi}{X_{1}-X_1^{-1}}e(\ti)\ ,
\end{multline*}
where we have noted that $e(\ti)=e(r_0(\ti))$ if $i_1=i^{-1}_{1}$, and we have again used Relation (\ref{eq2}) with upper indices $V$ instead of $\cI$.

At this point of the proof , we can start using the following relations, for any $\some\in\{V,\cI\}$ and any formal power series $P$ in $y^{\some}_1,\dots,y^{\some}_n$:
\begin{equation}\label{eqproof1}
\bigl(\Phi^{\some}_bP-{}^{r_b}P\Phi_b^{\some}\bigr)e(\ti)=\delta_{i_b,i_{b+1}}\frac{P-{}^{r_b}P}{y^{\some}_{b+1}-y^{\some}_b}Q^{\some}_b(\ti)e(\ti)\ .
\end{equation}
\begin{equation}\label{eqproof2}
\bigl(\Phi^{\some}_0P-{}^{r_0}P\Phi_0^{\some}\bigr)e(\ti)=\delta_{i_1,i^{-1}_{1}}\frac{P-{}^{r_0}P}{y^{\some}_{1}}Q^{\some}_0(\ti)e(\ti)\ .
\end{equation}

\vskip .1cm
$\bullet$ \textbf{Relations (\ref{rel1-Phi3}) and (\ref{Rel:V5}) with $\boldsymbol{a=0}$.} Let $b\in\{2,\dots,n-1\}$. Note that $Q^{\some}_b(\ti)\in K[[y^{\some}_b,y^{\some}_{b+1}]]$ so it commutes with $\Phi^{\some}_0$, and moreover the dependence on $\ti$ is only through $i_b$ and $i_{b+1}$ so we have $Q^{\some}_b(r_0(\ti))=Q^{\some}_b(\ti)$. Similarly, $Q^{\some}_0(\ti)\in K[[y^{\some}_1]]$ so it commutes with $\Phi^{\some}_b$, and moreover the dependence on $\ti$ is only through $i_1$ and so we have $Q^{\some}_0(r_b(\ti))=Q^{\some}_0(\ti)$. Thus we have
\[(\psi^{\some}_0\psi^{\some}_b-\psi^{\some}_b\psi^{\some}_0)e(\ti)=\bigl(\Phi^{\some}_0\tQ^{\some}_0(r_b(\ti))\Phi^{\some}_b\tQ^{\some}_b(\ti)-\Phi^{\some}_b\tQ^{\some}_b(r_0(\ti))\Phi^{\some}_0\tQ^{\some}_0(\ti)\bigr)e(\ti)=(\Phi^{\some}_0\Phi^{\some}_b-\Phi^{\some}_b\Phi^{\some}_0)\tQ^{\some}_b(\ti)\tQ^{\some}_0(\ti)e(\ti)\ ,\]
and we note that $\tQ^{\some}_b(\ti)\tQ^{\some}_0(\ti)$ is invertible to conclude the verification.

\vskip .1cm
$\bullet$ \textbf{Relations (\ref{rel1-Phi4}) with $\boldsymbol{a=0}$ and (\ref{Rel:V10}).} Assume first that $i_1\neq i^{-1}_{1}$. Using (\ref{eqproof2}), we have
\[(\psi^{\some}_0)^2e(\ti)=\Phi^{\some}_0\tQ^{\some}_0(r_0(\ti))\,\Phi^{\some}_0\tQ^{\some}_0(\ti)\,e(\ti)=(\Phi^{\some}_0)^2.{}^{r_0}\tQ^{\some}_0(r_0(\ti))\tQ^{\some}_0(\ti)\,e(\ti)\ .\]
Therefore, we have:
\[\Bigl((\psi^{\some}_0)^2-\Gamma^{\some}_0(\ti){^{r_0}}\tQ^{\some}_0(r_0(\ti))\tQ^{\some}_0(\ti)\Bigr)e(\ti)
=0\ \ \ \ \Leftrightarrow\ \ \ \ \Bigl((\Phi^{\some}_0)^2-\Gamma^{\some}_0(\ti)\Bigr){^{r_0}}\tQ^{\some}_0(r_0(\ti))\tQ^{\some}_0(\ti)e(\ti)=0 .\]
We note that, by definition, $\Gamma^{\some}_0(\ti)e(\ti)=\Gamma^{\some}_0e(\ti)$, so on the right we have (\ref{rel1-Phi4}) for $a=0$ multiplied by ${^{r_0}}\tQ^{\some}_0(r_0(\ti))\tQ^{\some}_0(\ti)$ which is invertible. And from Condition (\ref{eq-Q5}) on the family $Q^{\some}_0(\ti)$, we have precisely (\ref{Rel:V10}) on the left (in all cases where $i_1\neq i^{-1}_{1}$).

Now assume that $i_1= i^{-1}_{1}$. From (\ref{eq-Q4}), a direct calculation shows that
\[\bigl({}^{r_0}Q^{\some}_0(\ti)-Q^{\some}_0(\ti)\bigr)e(\ti)=i_1^{-1}(p+p^{-1})y^{\some}_1e(\ti)\ .\]
Therefore we have, using the commutation relation (\ref{eqproof2}),
\begin{multline*}
(\psi^{\some}_0)^2e(\ti)=\Phi^{\some}_0\,\tQ^{\some}_0(\ti)\,\Phi^{\some}_0\,\tQ^{\some}_0(\ti)\,e(\ti)=\Phi^{\some}_0\Bigl(\Phi^{\some}_0{}^{r_0}\tQ^{\some}_0(\ti)-Q^{\some}_0(\ti)\frac{\tQ^{\some}_0(\ti)-{}^{r_0}\tQ^{\some}_0(\ti)}{y^{\some}_{1}}\Bigr)\tQ^{\some}_0(\ti)\,e(\ti)\\
=\Phi^{\some}_0\Bigl(\Phi^{\some}_0-\frac{{}^{r_0}Q^{\some}_0(\ti)-Q^{\some}_0(\ti)}{y^{\some}_{1}}\Bigr){}^{r_0}\tQ^{\some}_0(\ti)\tQ^{\some}_0(\ti)\,e(\ti)=\Phi^{\some}_0\Bigl(\Phi^{\some}_0-(p+p^{-1})\Bigr){}^{r_0}\tQ^{\some}_0(\ti)\tQ^{\some}_0(\ti)\,e(\ti)\ ,
\end{multline*}
and we note again, to conclude the proof of both (\ref{rel1-Phi4}) with $a=0$ and (\ref{Rel:V10}) that ${}^{r_0}\tQ^{\some}_0(\ti)\tQ^{\some}_0(\ti)$ is invertible.

\vskip .1cm
$\bullet$ \textbf{Relations (\ref{rel1-Phi6}) and (\ref{Rel:V11}).} To treat the final four-term braid relations, we use a method similar to the one used for the three-term braid relations in paragraph ``\textbf{Relations (\ref{rel1-Phi5}) and (\ref{Rel:V7})}'' during the proof of Theorem \ref{theoA}. 

We will write $\psi^{\some}_0$, $\psi^{\some}_1$ in terms of $\Phi_0^{\some}$, $\Phi^{\some}_1$ and use all the already proved relations, especially the commutation relations (\ref{eqproof1})-(\ref{eqproof2}). To simplify notation, we note that $Q^{\some}_1(\ti)$ depends on $\ti$ only through $i_1$ and $i_{2}$, so we will write $Q^{\some}_1(i_1,i_{2})$ instead. Similarly, $Q^{\some}_0(\ti)$ depends on $\ti$ only through $i_1$, so we will write $Q^{\some}_0(i_1)$. We denote 
$$(i_1,i_2)=:(i,j)\ ,$$
and we will almost always omit the idempotent $e(\ti)$ on the right hand side of each line.

First we note that, by repeatedly using the commutation relations (\ref{eqproof1})-(\ref{eqproof2}), we will obtain
\begin{multline}\label{dev1}
\psi^{\some}_0\psi^{\some}_1\psi^{\some}_0\psi^{\some}_1=\Phi^{\some}_0\tQ^{\some}_0(i)\ \Phi^{\some}_1\tQ^{\some}_1(j^{-1}i)\ \Phi^{\some}_0\tQ^{\some}_0(j)\ \Phi^{\some}_1\tQ^{\some}_1(ij)\\
=\Phi^{\some}_0\Phi^{\some}_1\Phi^{\some}_0\Phi^{\some}_1\ {}^{r_1r_0r_1}\tQ^{\some}_0(i)\,{}^{r_1r_0}\tQ^{\some}_1(j^{-1}i)\,{}^{r_1}\tQ^{\some}_0(j)\,\tQ^{\some}_1(ij)\ +\ \dots
\end{multline}
\begin{multline}\label{dev2}
\psi^{\some}_1\psi^{\some}_0\psi^{\some}_1\psi^{\some}_0=\Phi^{\some}_1\tQ^{\some}_1(j^{-1}i^{-1})\ \Phi^{\some}_0\tQ^{\some}_0(j)\ \Phi^{\some}_1\tQ^{\some}_1(i^{-1}j)\ \Phi^{\some}_0\tQ^{\some}_0(i)\\
=\Phi^{\some}_0\Phi^{\some}_1\Phi^{\some}_0\Phi^{\some}_1\ {}^{r_0r_1r_0}\tQ^{\some}_1(j^{-1}i^{-1})\,{}^{r_0r_1}\tQ^{\some}_0(j)\,{}^{r_0}\tQ^{\some}_1(i^{-1}j)\,\tQ^{\some}_0(i)\ +\ \dots
\end{multline}
where in each case, the dots indicate terms with at most three occurrences of $\Phi^{\some}_0,\Phi^{\some}_1$. The crucial fact here is that
\[{}^{r_1r_0r_1}\tQ^{\some}_0(i)\,{}^{r_1r_0}\tQ^{\some}_1(j^{-1}i)\,{}^{r_1}\tQ^{\some}_0(j)\,\tQ^{\some}_1(ij)={}^{r_0r_1r_0}\tQ^{\some}_1(j^{-1}i^{-1})\,{}^{r_0r_1}\tQ^{\some}_0(j)\,{}^{r_0}\tQ^{\some}_1(i^{-1}j)\,\tQ^{\some}_0(i)=:\bold{Q}^{\some}\ .\]
This equality is obtained by using the two conditions in (\ref{eq-Q6}) two times each. So now we set
\[B^{\some}e(\ti):=\Bigl(\psi^{\some}_0\psi^{\some}_1\psi^{\some}_0\psi^{\some}_1-\psi^{\some}_1\psi^{\some}_0\psi^{\some}_1\psi^{\some}_0-(\Phi^{\some}_0\Phi^{\some}_1\Phi^{\some}_0\Phi^{\some}_1-\Phi^{\some}_1\Phi^{\some}_0\Phi^{\some}_1\Phi^{\some}_0)\bold{Q}^{\some}\Bigr)e(\ti)\ ,\]
and our goal is to calculate $B^{\some}e(\ti)$ and to check that, for any $\ti$, this is equal to the right hand side of (\ref{Rel:V11}) minus the right hand side of (\ref{rel1-Phi6}) multiplied by $\bold{Q}^{\some}$ on the right. Since $\bold{Q}^{\some}$ is invertible, this will prove both (\ref{rel1-Phi6}) and (\ref{Rel:V11}) (by first taking $\some=V$ and then $\some=\cI$).

There are several cases to consider (listed below), and in each case, this is a straightforward but lengthy calculation. Note that these calculations do not use an explicit expression for the elements $Q^{\some}_b(\ti)$ but only the imposed Conditions (\ref{eq-Q1})--(\ref{eq-Q3}) and (\ref{eq-Q4})--(\ref{eq-Q6}).
 
The different cases to consider are related to the different possible orbits of $\ti$ under the action of the Weyl group of type $B_2$ generated by $r_0$ and $r_1$. Here are the different possibilities for the orbit $O_{\ti}$ (all elements listed in a given orbit are assumed to be distinct):
\begin{enumerate}
\item $O_{\ti}=\{\ti\}$ (the stabiliser is of order 8) : this is Case 1;
\item $O_{\ti}=\{\ti, r_1(\ti)\}$ (the stabiliser of $\ti$ is generated by $r_0$ and $r_1r_0r_1$): this is Case 2;
\item $O_{\ti}=\{\ti, r_1(\ti),r_0(\ti),r_1r_0(\ti)\}$ (the stabiliser of $\ti$ is generated by $r_1r_0r_1$): this is Case 3;
\item $O_{\ti}=\{\ti, r_0(\ti),r_1(\ti),r_0r_1(\ti)\}$ (the stabiliser of $\ti$ is generated by $r_0r_1r_0$): this is Case 4;
\item $O_{\ti}=\{\ti, r_1(\ti),r_0r_1(\ti),r_1r_0r_1(\ti)\}$ (the stabiliser of $\ti$ is generated by $r_0$);
\item $O_{\ti}=\{\ti, r_0(\ti),r_1r_0(\ti),r_0r_1r_0(\ti)\}$ (the stabiliser of $\ti$ is generated by $r_1$);
\item $O_{\ti}$ contains $8$ elements.
\end{enumerate} 
Note that an orbit $O_{\ti}=\{\ti, r_0(\ti)\}$ (the stabiliser of $\ti$ is generated by $r_1$ and $r_0r_1r_0$) is impossible since this would mean that $i_1^2\neq 1$, while $i_1=i_2$ and $i_1^{-1}=i_2$.

We will repeatedly use the commutation relations (\ref{eqproof1})-(\ref{eqproof2}). The right hand sides in these relations, which appear in the calculation depending on conditions on $i$ and $j$, produce the ``correcting'' terms indicated by dots in (\ref{dev1})-(\ref{dev2}).
\\
\\
\textbf{Notation.} For convenience, we drop the symbol $\some$ until the end of this verification.
\\
\\
\textbf{Case 7: $O_{\ti}$ contains $8$ elements.} This case is exactly when there is no correcting term in (\ref{dev1})-(\ref{dev2}), so we find here immediately that $B^\some e(\ti)=0$.
\\
\\
\textbf{Case 6: $O_{\ti}=\{\ti, r_0(\ti),r_1r_0(\ti),r_0r_1r_0(\ti)\}$.} In this case we have $i=j$ and $i^2\neq 1$. Correcting terms can only possibly come when applying the commutation relation to the last $\Phi_1$ in (\ref{dev1}). However we have:
\[\psi_0\psi_1\psi_0\psi_1=\Phi_0\Phi_1\Phi_0\ {}^{r_0r_1}\tQ_0(i){}^{r_0}\tQ_1(i^{-1}i)\tQ_0(i)\ \Phi_1\tQ_1(ii)\ ,\]
and, due to the Conditions (\ref{eq-Q6}) satisfied by the $Q$'s, it turns out that ${}^{r_0r_1}\tQ_0(i){}^{r_0}\tQ_1(i^{-1}i)\tQ_0(i)$ is invariant by $r_1$ and therefore commutes with $\Phi_1$. So finally there is no correcting term and we have $B^\some e(\ti)=0$.
\\
\\
\textbf{Case 5: $O_{\ti}=\{\ti, r_1(\ti),r_0r_1(\ti),r_1r_0r_1(\ti)\}$.} In this case we have $i\neq j$, $i^2=1$, $j^2\neq 1$. Correcting terms can only possibly come when applying the commutation relation to the last $\Phi_0$ in (\ref{dev2}) and similarly to the previous case, one finds that there is no correcting term and we have $B^\some e(\ti)=0$.
\\
\\
\textbf{Case 4:} $O_{\ti}=\{\ti, r_0(\ti),r_1(\ti),r_0r_1(\ti)\}$. In this case we have $i=j^{-1}$ and $ i^2\neq 1$. In (\ref{dev1}), the only correcting term comes when applying the commutation relation to the first $\Phi_1$. We have:
\begin{multline*}
\psi_0\psi_1\psi_0\psi_1-\Phi_0\Phi_1\Phi_0\Phi_1\textbf{Q}=\Phi_0\frac{\tQ_0(i)-{}^{r_1}\tQ_0(i)}{y_2-y_1}\psi_0\psi_1\\
= \psi_0^2\psi_1\frac{1}{y_1+y_2}\ -\ \Phi_0^2\Phi_1\,{}^{r_1r_0r_1}\tQ_0(i){}^{r_1}\tQ_0(j)\tQ_1(ij)\frac{1}{y_1+y_2}= \psi_0^2\psi_1\frac{1}{y_1+y_2}\ -\ \Phi_1{}^{r_1}\Gamma_0\frac{{}^{r_0}Q_1(jj)}{y_1+y_2}\,\textbf{Q}\ .
\end{multline*}
In (\ref{dev2}), the only correcting term comes when applying the commutation relation to the second $\Phi_1$:
\begin{multline*}
\psi_1\psi_0\psi_1\psi_0-\Phi_1\Phi_0\Phi_1\Phi_0\textbf{Q}=\psi_1\Phi_0\frac{\tQ_0(j)-{}^{r_1}\tQ_0(j)}{y_2-y_1}\psi_0\\
= \psi_1\psi_0^2\frac{1}{y_1+y_2}\ -\ \Phi_1\Phi_0^2\,\tQ_1(ij){}^{r_0r_1}\tQ_0(j)\tQ_0(i)\frac{1}{y_1+y_2}= \psi_0^2\psi_1\frac{1}{y_1+y_2}\ -\ \Phi_1\Gamma_0\frac{{}^{r_0}Q_1(jj)}{y_1+y_2}\,\textbf{Q}\ .
\end{multline*}
Once we note, using Condition (\ref{eq-Q1}) on $Q_1(jj)$, that $\displaystyle\frac{{^{r_0}}Q_1(jj)}{y_1+y_2}=\frac{q^{-1}X^{-1}_2-qX_1}{X^{-1}_2-X_1}$, combining these two calculations, we find
\begin{equation*}
B^\some e(\ti) = \big( \psi_0^2\psi_1-\psi_1\psi_0^2 \big)\frac{1}{y_1+y_2}\ +\ \Phi_1(\Gamma_0-{^{r_1}}\Gamma_0) \frac{q^{-1}X^{-1}_2-qX_1}{X^{-1}_2-X_1}\textbf{Q}.
\end{equation*}
The coefficient in front of $\Phi_1$ is what is needed and the verification is concluded by the following formulas which are easy to check:
\[\big( \psi_0^2\psi_1-\psi_1\psi_0^2 \big)\frac{1}{y_1+y_2}e(\ti)=\left\lbrace
\begin{array}{ll}
0 & \textrm{ if } i\stackrel{p}{\nleftrightarrow} i^{-1}\,, \\[0.2em]
-\psi_1\idmep & \textrm{ if } i^{-1} \stackrel{p}{\longrightarrow} i\,,\\[0.2em]
\psi_1\idmep & \textrm{ if }  i^{-1} \stackrel{p}{\longleftarrow} i\,,\\[0.2em]
\psi_1(y_1-y_2)\idmep & \textrm{ if } i^{-1} \stackrel{p}{\longleftrightarrow} i\,.
\end{array}\right.\] 
\\
\\
\textbf{Case 2 and Case 3: $O_{\ti}=\{\ti, r_1(\ti)\}$ or $O_{\ti}=\{\ti, r_1(\ti),r_0(\ti),r_1r_0(\ti)\}$}. In these cases we have $i\neq j$ and $j^2=1$. The two cases differ by whether $i^2=1$ (Case 2) or $i^2\neq 1$ (Case 3).
In (\ref{dev1}), the only correcting term comes when applying the commutation relation to the second $\Phi_0$. Performing calculations in a similar way than just above, we find:
\[
\psi_0\psi_1\psi_0\psi_1-\Phi_0\Phi_1\Phi_0\Phi_1\textbf{Q}= \psi_0\psi_1^2\frac{1}{y_2}\ -\ \Phi_0\Gamma_1\frac{{}^{r_1}Q_0(j)}{y_2}\,\textbf{Q}\ .\]
In (\ref{dev2}), the correcting term can come when applying the commutation relation to both $\Phi_0$. We start the calculation as follows:
\[\psi_1\psi_0\psi_1\psi_0=\Phi_1\Phi_0\Phi_1{}^{r_1r_0}\tQ_1(ji^{-1}){}^{r_1}\tQ_0(ji^{-1})\tQ_1(i^{-1}j)\Phi_0\tQ_0(i)\ +\      \Phi_1\frac{\tQ_1(ji^{-1})-{}^{r_0}\tQ_1(ji^{-1})}{y_1}\psi_1\psi_0\ .\]
For the first term, we note that, if $i^2=1$, the factor between $\Phi_1\Phi_0\Phi_1$ and $\Phi_0$ commutes with $\Phi_0$ thanks to Conditions (\ref{eq-Q6}) so that there is no correcting term when moving it through $\Phi_0$. If $i^2\neq1$ there is no correcting term either and we conclude that, in both cases, this first term is equal to $\Phi_1\Phi_0\Phi_1\Phi_0\textbf{Q}$.

Therefore, we have
\[\psi_1\psi_0\psi_1\psi_0-\Phi_1\Phi_0\Phi_1\Phi_0\textbf{Q}=\Phi_1\frac{\tQ_1(ji^{-1})-{}^{r_0}\tQ_1(ji^{-1})}{y_1}\psi_1\psi_0
= \psi_1^2\psi_0\frac{1}{y_2}\,-\,\Phi_1^2\,\frac{{}^{r_1r_0}\tQ_1(ji^{-1})\tQ_1(i^{-1}j)}{y_2}\Phi_0\tQ_0(i)\ ,\]
and we note, as explained above, that in both cases ($i^2=1$ and $i^2\neq1$) moving ${}^{r_1r_0}\tQ_1(ji^{-1})\tQ_1(i^{-1}j)$ through $\Phi_0$ produces no correcting term. So we find
\begin{multline*}
\psi_1\psi_0\psi_1\psi_0-\Phi_1\Phi_0\Phi_1\Phi_0\textbf{Q} = \psi_1^2\psi_0\frac{1}{y_2}\ -\ \Gamma_1\,\Phi_0\frac{{}^{r_0r_1r_0}\tQ_1(ji^{-1})\tQ_1(i^{-1}j)\tQ_0(i)}{y_2}\\
=\psi_1^2\psi_0\frac{1}{y_2}-\ \Gamma_1\,\Phi_0\frac{{}^{r_1}Q_0(j)}{y_2}\textbf{Q}\\
= \psi_1^2\psi_0\frac{1}{y_2}\ -\ \Phi_0\,{}^{r_0}\Gamma_1\frac{{}^{r_1}Q_0(j)}{y_2}\textbf{Q}\ -\ 
\delta_{i,i^{-1}}\frac{\Gamma_1-{}^{r_0}\Gamma_1}{y_1}Q_0(i)\frac{{}^{r_1}Q_0(j)}{y_2}\textbf{Q}\ .
\end{multline*}
Once we note that, using Condition (\ref{eq-Q4}) on $Q_0(j)$ and $Q_0(i)$ if $i^2=1$, we have
\begin{equation*}
\frac{{^{r_1}}Q_0(j)}{y_2} = \frac{pX_2-p^{-1}X^{-1}_2}{X_2-X^{-1}_2}\ \ \ \ \ \text{and}\ \ \ \ \ \frac{Q_0(i)}{y_1}=\frac{pX_1-p^{-1}X_1^{-1}}{X_1-X_1^{-1}}\ \text{if $i^2=1$,}
\end{equation*}
we can combine the two calculations performed above to find
\begin{equation*}
B^\some e(\ti) = \big( \psi_0\psi_1^2-\psi_1^2\psi_0 \big)\frac{1}{y_2}\ -\ \Bigl(\bigl(\Phi_0+\delta_{i,i^{-1}}\frac{pX_1-p^{-1}X_1^{-1}}{X_1-X_1^{-1}}\bigr)(\Gamma_1-{}^{r_0}\Gamma_1) \frac{pX_2-p^{-1}X^{-1}_2}{X_2-X^{-1}_2}\Bigr)\textbf{Q}\ .
\end{equation*}
The factor in front of $\textbf{Q}$ is what is needed (in both Cases 2 and 3). 

Now assume that we have $i^2\neq1$ (Case 3). Note that here $i \leftrightarrow j$ (that is $q^2=-1$ and $i=-j$) is impossible since we have $i^2\neq1$ while $j^2=1$. Then, the verification is concluded in this case using the following formulas which are easy to check:
\[\big( \psi_0\psi_1^2-\psi_1^2\psi_0 \big)\frac{1}{y_2}e(\ti)=\left\lbrace
\begin{array}{ll}
0 & \textrm{ if } i\nleftrightarrow j\,, \\[0.2em]
2\psi_0\idmep & \textrm{ if } i \rightarrow j\,,\\[0.2em]
-2\psi_0\idmep & \textrm{ if }  i \leftarrow j\,.
\end{array}\right.\] 
Then assume that we have $i^2=1$ (Case 2). Then we have $i=-j$ and therefore, either $i\nleftrightarrow j$ or $i \leftrightarrow j$, and we have
\[\big( \psi_0\psi_1^2-\psi_1^2\psi_0 \big)\frac{1}{y_2}e(\ti)=\left\lbrace
\begin{array}{ll}
0 & \textrm{ if } i\nleftrightarrow j\,, \\[0.2em]
4(\psi_0y_1-1)\idmep & \textrm{ if }  i \leftrightarrow j\,.
\end{array}\right.\] 
This concludes the verification in these cases.
\\
\\
\textbf{Case 1:} $O_{\ti}=\{\ti\}$. In this case we have $i=j$ and $i^2=j^2=1$. This is the most difficult case computationally since correcting terms may appear for every commutation relation we use. We sketch the main steps of the calculation. To save space, we set $Q_0:=Q_0(i)$, $\tQ_0=Q_0^{-1}$ and $Q_1:=Q_1(ii)$, $\tQ_1=Q_1^{-1}$. We recall that in this case $Q_0$ and $Q_1$ have explicit expressions given in (\ref{eq-Q1}) and (\ref{eq-Q4}), that we recall here in terms of $X$'s (the expression for $Q_1$ comes from the explicit choice of the power series $f$ and $g$):
\[Q_0=i\bigl( p^{-1}X^{-1}_1-pX_{1} \bigr)\ \ \ \ \ \text{and}\ \ \ \ \ Q_1=i\big( q^{-1}X_1-qX_{2} \big)\bigl(1+X_1^{-1}X_2^{-1})\,.\]
We recall that we have in this case
\begin{equation}\label{C1-a}
\psi_1^2=\Phi_1\tQ_1\Phi_1\tQ_1=0\ \ \ \ \ \text{and}\ \ \ \ \ \psi_0^2=\Phi_0\tQ_0\Phi_0\tQ_0=0\ .
\end{equation}
We will also use the following particular instances of the commutation relation between $\Phi$'s and $X$'s, which are obtained directly from (\ref{rel1-Phi2}),
\begin{equation}\label{C1-b}
\tQ_1\Phi_0-\Phi_0{}^{r_0}\tQ_1=(q+q^{-1}){}^{r_0}\tQ_1\tQ_1Q_0\ ,
\end{equation}
\begin{equation}\label{C1-c}
\Phi_1\tQ_0-{}^{r_1}\tQ_0\Phi_1=\frac{p+p^{-1}X_1^{-1}X_2^{-1}}{1+X_1^{-1}X_2^{-1}}{}^{r_1}\tQ_0\tQ_0Q_1\ ,
\end{equation}
\begin{equation}\label{C1-d}
\frac{p+p^{-1}X_1^{-1}X_2^{-1}}{1+X_1^{-1}X_2^{-1}}\Phi_0  -  \Phi_0\frac{p+p^{-1}X_1X_2^{-1}}{1+X_1X_2^{-1}}=(p-p^{-1})\frac{q-q^{-1}X_1^{-1}X_2^{-1}}{1+X_1^{-1}X_2^{-1}}{}^{r_0}\tQ_1Q_0\ .
\end{equation}
Then we calculate as follows, applying first (\ref{C1-c}),
\begin{multline*}
\psi_0\psi_1\psi_0\psi_1=\Phi_0\tQ_0\Phi_1\tQ_1\Phi_0\tQ_0\Phi_1\tQ_1\\
=\ (A:=)\ \Phi_0\Phi_1{}^{r_1}\tQ_0\tQ_1\Phi_0\tQ_0\Phi_1\tQ_1\ \ +\ (B:=)\ \Phi_0\frac{p+p^{-1}X_1^{-1}X_2^{-1}}{1+X_1^{-1}X_2^{-1}}{}^{r_1}\tQ_0\tQ_0\Phi_0\tQ_0\Phi_1\tQ_1\ .
\end{multline*}
Then we use that ${}^{r_1}\tQ_0$ commutes with $\Phi_0$ thanks to the condition (\ref{eq-Q6}) on $Q_0$ and (\ref{C1-b}) to find
\[A=\Phi_0\Phi_1\Phi_0{}^{r_0}\tQ_1{}^{r_1}\tQ_0\tQ_0\Phi_1\tQ_1+(q+q^{-1})\Phi_0\Phi_1{}^{r_0}\tQ_1\tQ_1{}^{r_1}\tQ_0\Phi_1\tQ_1\ .\]
Using condition (\ref{eq-Q6}) on $Q_1$ we have that ${}^{r_0}\tQ_1$ commutes with $\Phi_1$ (as well as ${}^{r_1}\tQ_0\tQ_0$), so there is no correcting term coming from the first term in the sum above. For the second term, applying the commutation relation (\ref{C1-c}) between ${}^{r_1}\tQ_0$ and the second $\Phi_1$, the first resulting term contains a factor $\Phi_1\tQ_1\Phi_1\tQ_1$ equal to 0 thanks to (\ref{C1-a}) and therefore only the correcting term remains. Thus, we find:
\[A=\Phi_0\Phi_1\Phi_0\Phi_1\textbf{Q}-(q+q^{-1})\Phi_0\Phi_1\frac{p+p^{-1}X_1^{-1}X_2^{-1}}{1+X_1^{-1}X_2^{-1}}\textbf{Q}\ .\]
Concerning the term $B$, we note again that ${}^{r_1}\tQ_0$ commutes with $\Phi_0$ thanks to the condition (\ref{eq-Q6}) and then we apply the commutation relation (\ref{C1-d}) between $\displaystyle\frac{p+p^{-1}X_1^{-1}X_2^{-1}}{1+X_1^{-1}X_2^{-1}}$ and the second $\Phi_0$. In the resulting expression, the first term contains a factor $\Phi_0\tQ_0\Phi_0\tQ_0$ equal to 0 thanks to (\ref{C1-a}) and therefore only the correcting term remains. We find at the end:
\[B=(p-p^{-1})\Phi_0\,\frac{q-q^{-1}X_1^{-1}X_2^{-1}}{1+X_1^{-1}X_2^{-1}}{}^{r_0}\tQ_1{}^{r_1}\tQ_0\tQ_0\,\Phi_1\tQ_1=(p-p^{-1})\Phi_0\Phi_1\,\frac{q-q^{-1}X_1^{-1}X_2^{-1}}{1+X_1^{-1}X_2^{-1}}\textbf{Q}\,\]
where we noted for the last equality that the factor between $\Phi_0$ and $\Phi_1$ commutes with $\Phi_1$, thanks to Condition (\ref{eq-Q6}) again. Finally, summing $A$ and $B$, we conclude with a small calculation that:
\[\psi_0\psi_1\psi_0\psi_1=\Bigl(\Phi_0\Phi_1\Phi_0\Phi_1-(pq^{-1}+p^{-1}q)\Phi_0\Phi_1\Bigr)\textbf{Q}\ .\]
Then, a very similar calculation shows that
\[\psi_1\psi_0\psi_1\psi_0=\Bigl(\Phi_1\Phi_0\Phi_1\Phi_0-(pq^{-1}+p^{-1}q)\Phi_1\Phi_0\Bigr)\textbf{Q}\ ,\]
which allows us to conclude that the following required relation is satisfied:
\[B^\some e(\ti)=-(pq^{-1}+p^{-1}q)(\Phi_0\Phi_1-\Phi_1\Phi_0)\textbf{Q}\ .\]

\paragraph{Proof of Proposition \ref{prop-iso2}.}

We again use the notation $Y^{-1}e^{H}(\ti)$, similar to the one introduced before Definition \ref{defI} and used in the proof of Theorem \ref{theoA}. For example, $(1-X^{-\alpha_a})^{-1}e^H(\ti)$ is defined if $r_a(\ti)\neq \ti$.

Let $a\in\{0,1,\dots,n-1\}$. We define elements of $e_{\beta}H(B_n)_{\blambda,m}$:
\[\Phi^H_a=g_ae_{\beta}-\sum_{\begin{array}{c}\scriptstyle{\textbf{i}\in\beta}\\[-0.2em] \scriptstyle{r_a(\textbf{i})\neq \textbf{i}}\end{array}}(q_a-q_a^{-1})(1-X^{-\alpha_a})^{-1}e^H(\textbf{i})+\sum_{\begin{array}{c}\scriptstyle{\textbf{i}\in\beta}\\[-0.2em] \scriptstyle{r_a(\textbf{i})= \textbf{i}}\end{array}}q_a^{-1}e^H(\textbf{i})\]
and elements of $\cI_N$:
\[g^{\cI}_a:=\Phi_a+\!\!\!\sum_{\begin{array}{c}\scriptstyle{\textbf{i}\in\beta}\\[-0.2em] \scriptstyle{r_a(\textbf{i})\neq \textbf{i}}\end{array}}(q_a-q_a^{-1})(1-X^{-\alpha_a})^{-1}e(\textbf{i})-\!\!\!\sum_{\begin{array}{c}\scriptstyle{\textbf{i}\in\beta}\\[-0.2em] \scriptstyle{r_a(\textbf{i})= \textbf{i}}\end{array}}q_a^{-1}e(\textbf{i})\ .\]

We consider the following maps, where $i\in\{1,\dots,n\}$, $a\in\{0,1,\dots,n-1\}$ and $\ti\in\beta$.
\begin{equation}\label{def-rho-sigma}
\rho\ :\ \ \begin{array}{rcl}
\cI_N & \to & e_{\beta}H(B_n)_{\blambda,m}\\[0.5em]
e(\ti) & \mapsto & e^H(\ti)\\[0.2em]
X_i & \mapsto & X_i e_{\beta}\\[0.2em]
\Phi_a & \mapsto & \Phi^H_a
\end{array}\ ,
\ \ \ \ \ \ 
\sigma\ :\ \ \begin{array}{rcl}
\hH(B_n) & \to & \cI_N\\[0.5em]
X_i & \mapsto & X_i\\[0.2em]
g_a & \mapsto & g_a^{\cI}
\end{array}\ .
\end{equation}
The proposition follows immediately from the two following lemmas.
\begin{lemma}\label{lem1}
The map $\rho$ extends to a morphism of algebras from $\cI_N$ to $e_{\beta}H(B_n)_{\blambda,m}$. The map $\sigma$ extends to a morphism of algebras from $\hH(B_n)$ to $\cI_N$. 
\end{lemma}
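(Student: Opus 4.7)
The strategy follows the template of Lemma \ref{lem1A}, reusing as much of that verification as possible. Taking $\alpha=\beta$ in Lemma \ref{lem1A}, every defining relation of $\cI_N$ not involving $\Phi_0$ is already satisfied by the images $\Phi^H_a,X_ie_\beta,e^H(\ti)$ in $e_\beta H(B_n)_{\blambda,m}$, and every defining relation of $\hH(B_n)$ not involving $g_0$ is already satisfied by $g_a^\cI,X_i$ in $\cI_N$. Hence the remaining verifications are, on the $\cI_N$ side, the $a=0$ instances of (\ref{rel1-Phi1}), (\ref{rel1-Phi2}), (\ref{rel1-Phi4}), the instances of (\ref{rel1-Phi3}) in which one of the indices equals $0$, and the four-term braid (\ref{rel1-Phi6}); and on the $\hH(B_n)$ side, the quadratic relation $g_0^2=(p-p^{-1})g_0+1$, the Lusztig relation (\ref{rel-Lu}) for $i=0$, the commutations $g_0g_b=g_bg_0$ for $b>1$, and the braid $g_0g_1g_0g_1=g_1g_0g_1g_0$.

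As in Lemma \ref{lem1A}, I adopt the symbol convention of Remark \ref{rem-symbol}: let $\some\in\{\cI,H\}$ and check the relations simultaneously on both sides by comparing $\Phi_0^\some$ with $g_0^\some$ via
\[\Phi_0^\some=g_0^\some-(p-p^{-1})(1-X^{-\alpha_0})^{-1}\ \text{when}\ r_0(\ti)\neq\ti,\qquad \Phi_0^\some=g_0^\some+p^{-1}\ \text{when}\ r_0(\ti)=\ti.\]
Then (\ref{rel1-Phi1}) with $a=0$ reflects, on the $H$ side, the fact that the intertwining element $U_0$ of Section \ref{subsec-int} sends each $M_\ti$ into $M_{r_0(\ti)}$, which follows from the first relation in (\ref{rel-Ui}). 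The $a=0$ instances of (\ref{rel1-Phi2}) and (\ref{rel1-Phi4}) reduce, using the identity $\Gamma_0=1+(p-p^{-1})^2/[(1-X^{\alpha_0})(1-X^{-\alpha_0})]$, to straightforward manipulations of exactly the same shape as the $a\geq 1$ cases carried out in Lemma \ref{lem1A}. The commutations $\Phi_0^\some\Phi_b^\some=\Phi_b^\some\Phi_0^\some$ for $b>1$ follow from the fact that $\Phi_0^\some$ is built from $g_0^\some$ and $X_1^{\pm1}$ only, while $\Phi_b^\some$ for $b\geq 2$ is built from $g_b^\some$ and $X_b^{\pm1},X_{b+1}^{\pm1}$; the relevant cross-commutations $g_0^\some X_b=X_bg_0^\some$ and $g_b^\some X_1=X_1g_b^\some$ are part of the relations already verified at this stage, as are the commutations between the idempotent corrections (the latter being expressible in terms of $X_1$ alone, resp.~$X_bX_{b+1}^{-1}$ alone).

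The main obstacle is the four-term braid verification. Following the template used for (\ref{rel1-Phi5}) in Lemma \ref{lem1A}, I would compute
\[D^\some e^\some(\ti):=\Bigl(\bigl(\Phi_0^\some\Phi_1^\some\Phi_0^\some\Phi_1^\some-\Phi_1^\some\Phi_0^\some\Phi_1^\some\Phi_0^\some\bigr)-\bigl(g_0^\some g_1^\some g_0^\some g_1^\some-g_1^\some g_0^\some g_1^\some g_0^\some\bigr)\Bigr)e^\some(\ti)\]
using only the previously verified commutations (\ref{rel1-Phi2}) (for $a=0$ and $a=1$) together with the quadratic relations (\ref{rel1-Phi4}). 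The seven-case analysis according to the orbit of $(i_1,i_2)$ under $\langle r_0,r_1\rangle$ that was carried out for (\ref{rel1-Phi6}) in the proof of Proposition \ref{prop-iso1} applies essentially verbatim: in the generic (eight-element orbit) case no correction terms appear, while in each of the other six cases the corrections produced by the commutation relations collect into exactly the right-hand side $R^\some(\ti)e^\some(\ti)$ of (\ref{rel1-Phi6}). Once $D^\some e^\some(\ti)=R^\some(\ti)e^\some(\ti)$ is established uniformly in $\some$, specializing $\some=H$ and using that $g_0g_1g_0g_1=g_1g_0g_1g_0$ holds in $e_\beta H(B_n)_{\blambda,m}$ yields (\ref{rel1-Phi6}) for the $\Phi^H_a$, while specializing $\some=\cI$ and using (\ref{rel1-Phi6}) inside $\cI_N$ yields the four-term braid for $g_0^\cI,g_1^\cI$. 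This completes the verification of both homomorphism statements.
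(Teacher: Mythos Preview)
Your outline is essentially the same strategy as the paper's: reuse Lemma~\ref{lem1A} for everything not involving $g_0/\Phi_0$, dispatch the $a=0$ instances of (\ref{rel1-Phi1})--(\ref{rel1-Phi4}) and the commutations with $\Phi_b$, $b>1$, by repeating the $a\geq 1$ computations verbatim, and then do the real work on the four-term braid via a seven-case analysis according to the $\langle r_0,r_1\rangle$-orbit of $(i_1,i_2)$.

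One point of comparison is worth flagging. For the braid relation you propose to expand each $\Phi_a^{\some}$ as $g_a^{\some}$ plus its rational correction and push the correction terms through using (\ref{rel1-Phi2}), claiming that the calculation from Proposition~\ref{prop-iso1} ``applies essentially verbatim.'' That calculation, however, compares $\psi$'s with $\Phi$'s via the power series $Q_k$, not $\Phi$'s with $g$'s via the rational corrections $(q_a-q_a^{-1})(1-X^{-\alpha_a})^{-1}$; only the orbit-by-orbit bookkeeping is shared, not the algebra. The paper instead observes that $\Phi_a^H e^H(\ti)=U'_a e^H(\ti)$ whenever $r_a(\ti)\neq\ti$ and carries out the seven cases directly with the normalized intertwiners $U'_a$ of Section~\ref{subsec-int}, exploiting $U'_0U'_1U'_0U'_1=U'_1U'_0U'_1U'_0$, $(U'_a)^2=\Gamma_a$, and $U'_aX^x=X^{r_a(x)}U'_a$. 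This makes each case a two- or three-line computation (for instance, Cases 5--7 are immediate), whereas your direct expansion in the $g_a$'s would reproduce the same answers but with noticeably more manipulation. Your approach is valid, but the invocation of Proposition~\ref{prop-iso1} as ``verbatim'' is a conflation; if you actually write it out, the cleanest route is the one through $U'_a$.
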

We still denote by $\rho$ and  $\sigma$ the obtained morphisms of algebras. The canonical surjection from $\hH(B_n)$ to $e_{\beta}H(B_n)_{\blambda,m}$ appearing in the following lemma is the composition of the standard surjection from $\hH(B_n)$ to its quotient $H(B_n)_{\blambda,m}$ followed by the multiplication by $e_{\beta}$.
\begin{lemma}\label{lem2}
The map $\sigma$ factors through the canonical surjection $\hH(B_n)\to e_{\beta}H(B_n)_{\blambda,m}$, and the resulting map provides the two-sided inverse of $\rho$.
\end{lemma}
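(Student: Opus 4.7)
The plan is to follow the template of Lemma \ref{lem2A} essentially verbatim, with only cosmetic modifications to accommodate the type $B$ setting. The argument splits naturally into three steps.

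First, I would show that $\sigma:\hH(B_n)\to \cI_N$ factors through the cyclotomic quotient $H(B_n)_{\blambda,m}$. This amounts to checking that the defining relation $\prod_{i\in I_{\lambda_1}\cup\dots\cup I_{\lambda_l}}(X_1-i)^{m(i)}=0$ is satisfied in $\cI_N$. This follows immediately from the defining relation $(X_1-i_1)^{m(i_1)}e(\ti)=0$ in $\cI_N$ (Relation (\ref{rel1-X1})), after summing over $\ti\in\beta$ and using that $\sum_{\ti\in\beta}e(\ti)=1$.

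Second, I would show that $\sigma$ descends further to the subalgebra $e_{\beta}H(B_n)_{\blambda,m}$, by establishing that, for any $\ti\in\bigl(I_{\lambda_1}\cup\dots\cup I_{\lambda_l}\bigr)^n$,
\begin{equation*}
\sigma\bigl(e^H(\ti)\bigr)=\left\{\begin{array}{cl} e(\ti) & \text{if $\ti\in\beta$,} \\[0.2em] 0 & \text{otherwise.}\end{array}\right.
\end{equation*}
Here I would reproduce the argument of \cite{BK} used in Lemma \ref{lem2A}. At this point $\sigma$ provides an algebra morphism from $H(B_n)_{\blambda,m}$ to $\cI_N$, turning $\cI_N$ into an $H(B_n)_{\blambda,m}$-module. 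Since $\sigma(X_k)=X_k$, for $\ti=(i_1,\dots,i_n)$ the subspace
\[M_{\ti}:=\{x\in \cI_N\mid (X_k-i_k)^rx=0\text{ for $k=1,\dots,n$ and some $r>0$}\}\]
is the common generalised eigenspace, and by construction of $e^H(\ti)$ in $H(B_n)_{\blambda,m}$, the element $\sigma(e^H(\ti))$ is the projection onto $M_{\ti}$ along $\cI_N=\bigoplus_{\ti}M_{\ti}$. On the other hand, Relations (\ref{rel1-X1})--(\ref{rel1-Xk}) combined with the complete orthogonal decomposition (\ref{rel1-e}) force this projection to equal $e(\ti)$ when $\ti\in\beta$ and $0$ otherwise, yielding the claim.

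Third, I would verify that the resulting map $\sigma:e_{\beta}H(B_n)_{\blambda,m}\to \cI_N$ is a two-sided inverse of $\rho$. It suffices to check $\sigma\circ\rho=\text{Id}_{\cI_N}$ and $\rho\circ\sigma=\text{Id}_{e_{\beta}H(B_n)_{\blambda,m}}$ on generators. For the idempotents this is precisely the content of step two. For $X_i$ it is immediate from the definitions (\ref{def-rho-sigma}). For each $\Phi_a$ (respectively $g_a e_{\beta}$), one uses the explicit formulas for $\Phi^H_a$ and $g^{\cI}_a$ together with $\sigma\bigl(e^H(\ti)\bigr)=e(\ti)$ to see that the correction terms cancel, recovering the generator. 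There is essentially no obstacle here: the only minor point is that, in contrast to Lemma \ref{lem2A}, the index $a$ now runs over $\{0,1,\dots,n-1\}$, but the formulas for $\Phi^H_a$ and $g^{\cI}_a$ are uniform in $a$, so the same one-line verification works. I expect no real difficulty beyond bookkeeping, since all the substantive work (namely the verification that $\rho$ and $\sigma$ are well-defined morphisms) has already been carried out in Lemma \ref{lem1}.
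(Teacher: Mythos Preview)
Your proposal is correct and matches the paper's approach exactly: the paper's proof reads, in its entirety, ``The proof is entirely similar to the proof of Lemma \ref{lem2A},'' and what you have written is precisely that proof transported to the type $B$ setting.
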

\begin{proof}[Proof of Lemma \ref{lem1}]
We will use the same method and notation as in the proof of Lemma \ref{lem1A}, the analogous lemma for type A. Thus, here as well, we will denote during the proof $\Phi_a^{\cI}:=\Phi_a$ and $e^{\cI}(\ti)=e(\ti)$ in $\cI_N$ and $g_a^H:=g_a$ in $\hH(B_n)$, and we will make use of a symbol $\some\in\{\cI,H\}$, in the same spirit as explained in Remark \ref{rem-symbol}.

We need to check that:
\begin{itemize}
\item[-] The defining relations of $\hH(B_n)$ are satisfied by the elements $g_a^{\cI}$ and $X_i$ in $\cI_N$. All the defining relations not involving $g_0$ were checked in Lemma \ref{lem1A} (taking $\alpha=\beta$);
\item[-] the defining relations of $\cI_N$ are satisfied by the elements $\Phi_a^H$, $X_ie_{\beta}$ and $e^H(\ti)$ in $e_{\beta}H(B_n)_{\blambda,m}$. All the defining relations not involving $\Phi_0$ were checked in Lemma \ref{lem1A} (taking $\alpha=\beta$).
\end{itemize}

For the defining relations of $\hH(B_n)$ involving $g_0$, except the braid relation  $g_0g_{1}g_0g_1=g_{1}g_0g_{1}g_0$, the verification is a direct repetition of the calculations in the proof of Lemma \ref{lem1A}. Similarly for the defining relations of $\cI_N$ involving $\Phi_0$, except the modified braid relation (\ref{rel1-Phi6}).

So it remains to check only the following relations.

$\bullet$ \textbf{Relations (\ref{rel1-Phi6}) and $\boldsymbol{g_0g_{1}g_0g_1=g_{1}g_0g_{1}g_0}$.} We again use the symbol $\some\in\{\cI,H\}$.

We claim that, for every $\ti\in\beta$,
\begin{equation}\label{eq1}
\bigl(\Phi^{\some}_0\Phi^{\some}_{1}\Phi^{\some}_0\Phi^{\some}_1-\Phi^{\some}_{1}\Phi^{\some}_0\Phi^{\some}_{1}\Phi^{\some}_0\bigr)e^{\some}(\ti) - \bigl(g_0^{\some}g^{\some}_{1}g^{\some}_0g^{\some}_1-g^{\some}_{1}g^{\some}_0g^{\some}_{1}g^{\some}_0\bigr)e^{\some}(\ti)\ 
\end{equation}
is equal to the right-hand side of (\ref{rel1-Phi6}) in terms of $\Phi^{\some}_0$ and $\Phi^{\some}_1$. This indeed implies that, first by taking $\some=H$, that (\ref{rel1-Phi6}) is satisfied by $\Phi_0^H$ and $\Phi_1^H$, and second by taking $\some=\cI$ that $g^{\cI}_0$ and $g^{\cI}_{1}$ satisfy the four-term braid relations.

We can use all the previously checked relations, namely all defining relations of $\cI_N$ and $\hH(B_n)$ except (\ref{rel1-Phi6}) and $g_0g_{1}g_0g_1=g_{1}g_0g_{1}g_0$. So this calculation is exactly the same in $\cI_N$ and in $e_{\beta}H(B_n)_{\blambda,m}$. Note moreover that we obviously have
\[\bigl(\Phi^{H}_0\Phi^{H}_{1}\Phi^{H}_0\Phi^{H}_1-\Phi^{H}_{1}\Phi^{H}_0\Phi^{H}_{1}\Phi^{H}_0\bigr)e^{H}(\ti) = \bigl(g_0g_{1}g_0g_1-g_{1}g_0g_{1}g_0\bigr)e^H(\ti)+\sum_{x,w} c_{x,w}g_wX^xe^H(\ti)\ ,\]
where the sum is over $x\in L$ and over words $g_w$ of length strictly less than 4 in $g_0$ and $g_1$. Therefore, checking Relation (\ref{rel1-Phi6}) for $\Phi_0^H$ and $\Phi_1^H$ in $e_{\beta}H(B_n)_{\blambda,m}$ at once implies our claim about (\ref{eq1}).

The different cases to consider are related to the different possible orbits of $\ti$ under the action of the Weyl group of type $B_2$ generated by $r_0$ and $r_1$. The 7 different possibilities were listed above in the proof of Proposition \ref{prop-iso1}, and we refer to this list for our different cases below.

\vskip .2cm 
Before starting the verification in each case, we recall the definition of the intertwining elements $U'_a$ in $\hH(B_n)^{loc}$ from Section \ref{subsec-int}, and we note that
\[\Phi^H_ae(\ti)=U'_ae(\ti)\ \ \ \ \ \text{if $r_a(\ti)\neq \ti$.}\]
We will use that in $\hH(B_n)^{loc}$, we have $U'_aX^x=X^{r_a(x)}U'_a$ for $x\in L$, $U'_0U'_1U'_0U'_1=U'_1U'_0U'_1U'_0$ and $(U'_a)^2=\Gamma_a$, with $\Gamma_a$ as in (\ref{rel1-Phi4}).

During the proof, set 
\[Br_{\Phi}:=\Phi^{H}_0\Phi^{H}_{1}\Phi^{H}_0\Phi^{H}_1-\Phi^{H}_{1}\Phi^{H}_0\Phi^{H}_{1}\Phi^{H}_0\ .\]
\textbf{1.} Case 1 is an easy and direct calculation. One finds that
\begin{multline*}
Br_{\Phi}e^{H}(\ti)\\
=\Bigl((g_0+p^{-1})(g_1+q^{-1})(g_0+p^{-1})(g_1+q^{-1})-(g_1+q^{-1})(g_0+p^{-1})(g_1+q^{-1})(g_0+p^{-1})\Bigr)e^H(\ti)
\\
=(p^{-1}q+pq^{-1})(g_0g_1-g_1g_0)e^H(\ti)\ ,
\end{multline*}
and on the other hand
\[\bigl(\Phi^{H}_0\Phi^{H}_{1}-\Phi^{H}_{1}\Phi^{H}_0\bigr)e^{H}(\ti)=\Bigl((g_0+p^{-1})(g_1+q^{-1})-(g_1+q^{-1})(g_0+p^{-1})\Bigr)e^H(\ti)=(g_0g_1-g_1g_0)e^H(\ti)\ .\]

\noindent\textbf{2. and 3.} In these cases, we have
\[Br_{\Phi}e^{H}(\ti)=\Bigl((g_0+C)U'_1(g_0+p^{-1})U'_1-U'_1(g_0+p^{-1})U'_1(g_0+C)\Bigr)e^H(\ti)\ ,\]
with $C=p^{-1}$ in Case 2 and $C=\displaystyle-\frac{p-p^{-1}}{1-X^{-\alpha_0}}$ in Case 3. 

Note first that in $\hH(B_n)^{loc}$, $C$ commutes with $U'_1(g_0+p^{-1})U'_1$. Indeed this is obvious for Case 2, and for Case 3 it follows from the fact that $r_0r_1(\alpha_0)=r_1(\alpha_0)$.

Note also that $g_0+p^{-1}=\displaystyle U'_0+p^{-1}+\frac{p-p^{-1}}{1-X^{-\alpha_0}}=U'_0+\frac{p-p^{-1}X^{-\alpha_0}}{1-X^{-\alpha_0}}$. So we need to calculate in $\hH(B_n)^{loc}$ the expression
\begin{equation}\label{eq-case2-1}
g_0U'_1\bigl(U'_0+\frac{p-p^{-1}X^{-\alpha_0}}{1-X^{-\alpha_0}}\bigr)U'_1-U'_1\bigl(U'_0+\frac{p-p^{-1}X^{-\alpha_0}}{1-X^{-\alpha_0}}\bigr)U'_1g_0\ .
\end{equation}
Now note that $X^{\alpha_0}$ commutes with $U'_1\bigl(U'_0+\frac{p-p^{-1}X^{-\alpha_0}}{1-X^{-\alpha_0}}\bigr)U'_1$ since $r_0r_1(\alpha_0)=r_1(\alpha_0)$, and therefore the expression (\ref{eq-case2-1}) is equal to
\[U'_0U'_1\bigl(U'_0+\frac{p-p^{-1}X^{-\alpha_0}}{1-X^{-\alpha_0}}\bigr)U'_1-U'_1\bigl(U'_0+\frac{p-p^{-1}X^{-\alpha_0}}{1-X^{-\alpha_0}}\bigr)U'_1U'_0\ .\]
Expanding this, we find
\begin{multline*}
U'_0(U'_1)^2\frac{p-p^{-1}X^{-r_1(\alpha_0)}}{1-X^{-r_1(\alpha_0)}}-(U'_1)^2\frac{p-p^{-1}X^{-r_1(\alpha_0)}}{1-X^{-r_1(\alpha_0)}}U'_0=U'_0(\Gamma_1-{}^{r_0}\Gamma_1)\frac{p-p^{-1}X^{-r_1(\alpha_0)}}{1-X^{-r_1(\alpha_0)}}\\
= U'_0(1-X^{-\alpha_0})Y_0\ ,
\end{multline*}
where $Y_0$ is defined after Relation (\ref{rel1-Phi6}). To conclude, we put back the idempotent $e^H(\ti)$ and express $U'_0$ in terms of $\Phi_0$ depending on which case (2 or 3) we are considering. In Case 2, we have $\Phi_0e^H(\ti)=\displaystyle (U'_0+\frac{p-p^{-1}X^{-\alpha_0}}{1-X^{-\alpha_0}})e^H(\ti)$ and in Case 3, we have $\Phi_0e^H(\ti)=U'_0e^H(\ti)$. So we find that (\ref{rel1-Phi6}) is satisfied in these cases. 

 \noindent\textbf{4.} Here we have
\[Br_{\Phi}e^{H}(\ti)=\Bigl(U'_0(g_1+q^{-1})U'_0U'_1-U'_1U'_0(g_1+q^{-1})U'_0\Bigr)e^H(\ti)\ ,\]
and we reproduce exactly the same reasoning as in Case 3 above, with the role of the indices $1$ and $0$ exchanged, as well as the role of $p$ and $q$. Thus we have 
\[Br_{\Phi}e^{H}(\ti)= -U'_1(\Gamma_0-{}^{r_1}\Gamma_0)\frac{q-q^{-1}X^{-r_0(\alpha_1)}}{1-X^{-r_0(\alpha_1)}}e^H(\ti)= U'_1(1-X^{-\alpha_1})Y_1e^H(\ti)\ ,\]
and we conclude that (\ref{rel1-Phi6}) is satisfied here as well since $\Phi_1e^H(\ti)=U'_1e^H(\ti)$.

\noindent\textbf{5.} Here we have
\[Br_{\Phi}e^{H}(\ti)=\Bigl((g_0+p^{-1})U'_1U'_0U'_1-U'_1U'_0U'_1(g_0+p^{-1})\Bigr)e^H(\ti)\ .\]
We have that $g_0=\displaystyle U'_0+\frac{p-p^{-1}}{1-X^{-\alpha_0}}$ commutes with $U'_1U'_0U'_1$. Indeed, $U'_0$ commutes with $U'_1U'_0U'_1$ from the braid relation for the intertwining elements, and $X^{\alpha_0}$ commutes with $U'_1U'_0U'_1$ since $r_0r_1(\alpha_0)=r_1(\alpha_0)$. So we have indeed in this case that $Br_{\Phi}e^{H}(\ti)=0$.

\noindent\textbf{6.} Here we have
\[Br_{\Phi}e^{H}(\ti)=\Bigl(U'_0U'_1U'_0(g_1+q^{-1})-(g_1+q^{-1})U'_0U'_1U'_0\Bigr)e^H(\ti)\ ,\]
and we reproduce exactly the same reasoning as in Case 5 above, with the role of the indices $1$ and $0$ exchanged, to find that $Br_{\Phi}e^{H}(\ti)=0$ here as well.

\noindent\textbf{7.} Here we have at once
\[Br_{\Phi}e^{H}(\ti)=\Bigl(U'_0U'_1U'_0U'_1-U'_1U'_0U'_1U'_0\Bigr)e^H(\ti)=0\ ,\]
using the braid relation for the intertwining elements.
\end{proof}

\begin{proof}[Proof of Lemma \ref{lem2}]
The proof is entirely similar to the proof of Lemma \ref{lem2A}.
\end{proof}

\noindent L. P.d'A.: { \sl \small Laboratoire de Math\'ematiques de Reims FRE 2011, Universit\'e de Reims Champagne-Ardenne, UFR Sciences exactes et naturelles,  Moulin de la Housse BP 1039, 51100 Reims, France} 
\newline \noindent {\tt \small email: loic.poulain-dandecy@univ-reims.fr}\\

\noindent R. W.: { \sl \small Universit\'{e} Paris Diderot-Paris VII, B\^{a}timent Sophie Germain, 75205 Paris Cedex 13 France} 
\newline \noindent {\tt \small email: ruari.walker@imj-prg.fr}

\end{document}